\documentclass[a4paper,reqno]{amsart}
\usepackage[utf8]{inputenc}
\usepackage{amssymb, slashed}
\usepackage{enumitem}
\usepackage{mathrsfs}
\usepackage{mathtools}
\usepackage{amsmath}
\usepackage [dvipsnames] { xcolor }
\usepackage[colorlinks=true]{hyperref}

\usepackage{bbm}

\hypersetup{linkcolor=cyan,urlcolor=green,citecolor=green}

\setlist[enumerate]{label=\emph{(\roman*)}}

\usepackage{environ}

\newtheorem{theorem}{Theorem}[section]

\newtheorem{lemma}[theorem]{Lemma}
\newtheorem{proposition}[theorem]{Proposition}

\theoremstyle{definition}
\newtheorem{definition}[theorem]{Definition}
\newtheorem{remark}[theorem]{Remark}
\numberwithin{equation}{section}

\def \R {{\mathbb{R} }}
\def \d {{\rm{d}}}
\def \pt{\partial_{t}}
\def \pr{\partial_{r}}

\def\Deltas{\slashed{\Delta}}

\def\cfrak{\mathfrak{c}}

\def\MM{\mathcal{M}}

\def\phiinit{\phi_{init}}

\def\phih{\hat{\phi}}

\def\psiinit{\psi_{init}}
\def\phihinit{\phih_{init}}

\def\psih{\hat{\psi}}
\def\psihinit{\psih_{init}}

\def\ep{\epsilon}

\definecolor{mycolor}{RGB}{139,170,65}

\newcounter{mnotecount}[section]

\def\de{\delta}

\def\lab{\label}

\providecommand{\dist}[1]{\mathrm{dist}[#1]}

\def\Sfrak{\mathfrak{S}}

\begin{document}

	\title[Generically sharp decay for a weak null wave system]{Generically sharp decay and blowing up at infinity for a weak null wave system}

 \author[S.~Dong]{Shijie Dong}
	\address{Southern University of Science and Technology, Shenzhen International Center for Mathematics, and Department of Mathematics, 518055 Shenzhen,  China.}
	\email{shijiedong1991@hotmail.com, dongsj@sustech.edu.cn}
	
	\author[S.~Ma]{Siyuan Ma}
	\address{State Key Laboratory of Mathematical Sciences, Academy of Mathematics and Systems Science, Chinese Academy of Sciences, Beijing 100190, China}
	\email{siyuan.ma@amss.ac.cn}
	
	\author[Y.~Ma]{Yue Ma}
	\address{Xi'an Jiaotong University, School of Mathematics and Statistics, 28 West Xianning Road, Xi'an Shaanxi 710049,  China.}
	\email{yuemath@xjtu.edu.cn}

	\author[X.~Yuan]{Xu Yuan}
	\address{State Key Laboratory of Mathematical Sciences, Academy of Mathematics and Systems Science, Chinese Academy of Sciences, Beijing 100190, China}
	\email{xu.yuan@amss.ac.cn}
	\begin{abstract}
We study a system of semilinear wave equations satisfying the weak null condition, which can be regarded as a simplified model for the Einstein vacuum equations. The main objective is to establish precise pointwise decay estimates, as both lower and upper bounds of decay, for small data solutions. Specifically, we show that the difference between the solution and its leading-order term is dominated by lower-order terms that decay faster in the retarded time variable $u = t-r$. Moreover, we prove that these pointwise decay estimates are sharp for a generic class of small initial data decaying sufficiently fast.

As applications of these estimates, we demonstrate that the energy of one component of the solution admits a lower bound that generically grows to infinity as $t \to +\infty$, which can be interpreted as “blowing up at infinity.” Furthermore, we verify that this component generically exhibits an energy cascade from high to low frequencies.
 \end{abstract}
	
    \maketitle
    \tableofcontents

 \section{Introduction}
 \subsection{Main results}
 In this article, we consider the following semilinear wave system satisfying the weak null condition,
 \begin{equation}\label{equ:main}
    \left\{
    \begin{aligned}
        -\Box \phi&=\left(\pt \psi\right)^{2},\quad \ (t, x)\in \{ (t,x):t^2 -r^2 \geq 1 \},
        \\
        -\Box \psi&=Q_{0}(\phi,\phi),\ \  (t, x)\in \{ (t,x):t^2 -r^2 \geq 1 \},
    \end{aligned}\right.
 \end{equation}
with initial data posed on the hypersurface $\mathcal{H}_1 := \{(t,x): t^2-r^2 =1 \}$,
 \begin{equation}\label{eq:ID}
 \begin{aligned}
 (\phi, \partial_t\phi)_{|\mathcal{H}_1} 
 =&(\phi(\sqrt{1+r^2}, x), \partial_t\phi(\sqrt{1+r^2}, x))
 = (\phi_0(x), \phi_1(x)),
 \\
  (\psi, \partial_t\psi)_{|\mathcal{H}_1} 
   =&(\psi(\sqrt{1+r^2}, x), \partial_t\psi(\sqrt{1+r^2}, x))
   = (\psi_0(x), \psi_1(x)).
 \end{aligned}
 \end{equation}
 Here, we denote $\Box$ the d’Alembertian operator 
 and $Q_{0}$ a null form:
 \begin{equation}
\lab{def:waveoperatorandQ0}
\Box=\partial_{\alpha}\partial^{\alpha}\quad \mbox{and}\quad Q_{0}(\phi,\phi)=\partial_{\alpha}\phi\partial^{\alpha}\phi,
 \end{equation}
 {where $\partial^{\alpha}=\eta^{\alpha\beta}\partial_{\beta}$ with $\eta^{\alpha\beta}$ being the inverse metric of the standard Minkowski metric $\eta_{\alpha\beta}$ with signature $(-,+,+,+)$.}
 This system is typically regarded as a simplified model for the vacuum Einstein equations in a wave gauge; see already Section \ref{subsect:VEEinwavegauge} for our motivation of studying the precise decay for the wave system \eqref{equ:main} to which the vacuum Einstein equations in a wave gauge can be reduced.

Denote $\| \cdot\|$ the usual $L^2$ norm in $\mathbb{R}^3$
of a given function. For a sufficiently regular function $f$, we denote $f_{\ell}$ with $\ell\in \mathbb{N}$, the $\ell$-th spherical harmonic mode of $f$. In particular, $f_{\ell=0}$ is the spherical mean of function $f$. See more details in Section \ref{SS:nota}.

Denote $r:=|x|$, 
and define the retarded and forward null coordinates
\begin{equation}
\lab{eq:coorduandv}
    u:=t-r\quad \mbox{and}\quad  v:=t+r.
\end{equation} 
Denote the future Cauchy development of the initial surface $\mathcal{H}_1$ by
\begin{equation}
\label{def:fullspacetimeregion}
\MM:=\{(t,x): u> 0 \ \ \mbox{and}\ \  v\geq u^{-1}\},
\end{equation} 
 and 
define a proper subset of the future null infinity by 
\begin{equation}
\lab{def:nullinfinitygeq0}
    \mathcal{I} := \{(t,x): (u,v)\in [0,+\infty)\times \left\{+\infty\right\}\}.
\end{equation} 
Also, we introduce the following definition of two space-time regions:
 \begin{definition}
 \label{def:regionsIandII:intro}
     Let $0<\delta\ll 1$ {be given}. We define the following two space-time regions:
     \begin{equation}
   \mathrm{I}:=\MM\cap\left\{r\le \frac{1}{2}u^{1-\delta}\right\}\quad 
         \mbox{and}\quad 
         \mathrm{II}:=\MM\cap\left\{r\ge \frac{1}{2}\exp({u^{\delta}})\right\}.
     \end{equation}
 \end{definition}

To state our main theorems on the asymptotic behavior of the solution in the full space-time region $\MM$, we define the following two explicit functions $\phi_{L}=\phi_{L}(u,v)$ and $\psi_{L}=\psi_{L}(u,v)$\footnote{Notice that in the following, $r=\frac{1}{2}(v-u)$ is a function of $(u,v)$.}: 
\begin{equation}
 \lab{eq:phibarandpsibar}
     \begin{aligned}
             \phi_{L}:=r^{-1}\left(\ln v-\ln u\right)\ \ \mbox{and}\ \  \psi_{L}:=r^{-1}\left(\frac{\ln u}{ u}-\frac{\ln v}{v}\right)
    \end{aligned}
\end{equation}
and introduce the new unknown 
   \begin{equation}
   \label{eq:definitionoftildepsi}
      \bar{\psi}:=\psi+\frac{1}{2}\phi^{2},
   \end{equation} 
  which,  in view of the system of equations \eqref{equ:main}, together with $\phi$ satisfies
\begin{equation}\label{equ:main-new}
    \left\{
    \begin{aligned}
        -\Box \phi&=\left(\pt \psi\right)^{2},
        \\
        -\Box \bar{\psi} &= \phi (\partial_t \psi)^2.
        \\
    \end{aligned}\right.
 \end{equation}

 \begin{theorem}\label{thm:main1}
 Let $N\in \mathbb{N}^{+}$ with $N\ge 6$. There exists {an} $\epsilon_{0}>0$ such that for all $\epsilon\in (0,\epsilon_{0})$ and all  initial data $(\phi_{0},\phi_1, \psi_{0}, \psi_1)$ satisfying the {following} smallness condition
\begin{equation}\label{est:smallness1}
\begin{aligned}
    \|\langle x\rangle^{{N}+2} \partial_x^{\leq N+1} (\phi_0,  \psi_0)\| 
    +  \|\langle x\rangle^{{N}+2} \partial_x^{\leq N} (\phi_1, \psi_1)\|   
    <\epsilon,
    \end{aligned}
\end{equation}
 the Cauchy problem~\eqref{equ:main}--\eqref{eq:ID} 
 admits a global-in-time solution $(\phi,\psi)$. Moreover, the following estimates of asymptotic behavior for $(\phi,\psi)$  are valid for $u\ge 2$.

 \begin{description}

         \item [ (i) Asymptotic behavior in \textrm{Region} $\mathrm{I}$]
         In the space-time region $\mathrm{I}$, we have 
          \begin{equation}
     \begin{aligned}
         \left|\phi(t,x)-\mathfrak{c}_{1}\phi_{L}\right|&\lesssim
        \ep \phi_{L}(\ln u)^{-1},\\
         \left|\psi(t,x)-\mathfrak{c}_{2}\psi_{L}\right|&\lesssim \ep \psi_{L}(\ln u)^{-1},
         \end{aligned}
         \end{equation}
{with} the constants {$\mathfrak{c}_{1}=\mathfrak{c}_{1}(\psi)\geq 0$ and $\mathfrak{c}_{2}=\mathfrak{c}_{2}(\phi,\psi)\in \R$}  given by 
         \begin{equation}
         \label{def:cfrak1and2:intro}
             \begin{aligned}
\mathfrak{c}_{1}&=\frac{1}{2}\int_{\mathcal{I}}
                 \left(\left(\pt (r\psi)\right)^{2}\right)_{\ell=0}(u,+\infty)\d u,\\
\mathfrak{c}_{2}&=\int_{\mathcal{I}}
                 \left(\frac{r\phi}{\ln v}
                 \left(\pt (r\psi)\right)^{2}\right)_{\ell=0}(u,+\infty)\d u.
             \end{aligned}
         \end{equation}

         \item [ (ii) Asymptotic behavior in \textrm{Region} $\mathrm{II}$]
         In the space-time region $\mathrm{II}$, we have
         \begin{equation}
     \begin{aligned}
         \left|\phi(t,x)-\mathfrak{c}_{3}\phi_{L}\right|&\lesssim
        \ep \phi_{L}(\ln u)^{-1},\\
        \left|\bar{\psi}(t,x)-r^{-1}{(r\psi)}(u,+\infty,\omega)+\mathfrak{c}_{4}r^{-1}v^{-1}\ln v\right|&\lesssim \ep r^{-1}v^{-1}u^{-\frac{\delta}{4}}\ln v.
         \end{aligned}
         \end{equation}
with the functions on sphere $\mathfrak{c}_{3}={\mathfrak{c}_{3}(\psi, \omega)}:\mathbb{S}^{2}\mapsto [0,+\infty)$ and $\mathfrak{c}_{4}={\mathfrak{c}_{4}(\phi,\psi, \omega)}:\mathbb{S}\mapsto \R$ given respectively by 
         \begin{equation}
         \label{def:cfrak3and4:intro}
             \begin{aligned}
                 \mathfrak{c}_{3}&=\frac{1}{2}\int_{\mathcal{I}}
                 \left(\pt (r\psi)\right)^{2}(u,+\infty{,\omega})\d u,\\
                 \mathfrak{c}_{4}&=\int_{\mathcal{I}}
                 \left(\frac{r\phi}{\ln v}
                 \left(\pt (r\psi)\right)^{2}\right)(u,+\infty{,\omega})\d u.
             \end{aligned}
         \end{equation}

           \item [ (iii) Asymptotic behavior of zeroth-order mode] In the space-time region $\MM$, we have for the zeroth-order mode $(\phi_{\ell=0},\bar{\psi}_{\ell=0})$, i.e., the radially symmetric part of $(\phi,\bar{\psi})$,  
     \begin{equation}
     \begin{aligned}
         \left|\phi_{\ell=0}(t,x)-\mathfrak{c}_{1}\phi_{L}\right|&\lesssim\ep 
         \phi_{L}(\ln u)^{-1},\\
         \left|\bar{\psi}_{\ell=0}(t,x)-\mathfrak{c}_{2}\psi_{L}\right|&\lesssim \ep \psi_{L}(\ln u)^{-1},
         \end{aligned}
         \end{equation}
while, for the zeroth-order mode $(\phi_{\ell=0},{\psi}_{\ell=0})$, we have, for $r\geq \frac{1}{2}u^{1+\delta}$ and $r\leq \frac{1}{2}u^{1-\delta}$ with an arbitrary small $\de>0$,
  \begin{equation}
     \begin{aligned}
         \left|\phi_{\ell=0}(t,x)-\mathfrak{c}_{1}\phi_{L}\right|&\lesssim\ep 
         \phi_{L}(\ln u)^{-1},\\
         \left|\psi_{\ell=0}(t,x)-\mathfrak{c}_{2}\psi_{L}\right|&\lesssim \ep \psi_{L}(\ln u)^{-1}.
         \end{aligned}
         \end{equation}
 \end{description}

In addition, there exists an open and dense\footnote{What we mean by ``open and dense" is with respect to a natural topology induced by the norm on the left-hand side of \eqref{est:smallness1}. See Section \ref{sec:generic}.} subset of initial data satisfying \eqref{est:smallness1} such that
\begin{itemize}
    \item $\cfrak_i\neq 0$ for $i=1,2$,
    \item and there exists an $\omega\in\mathbb{S}^2$ such that $\cfrak_i(\omega)\neq 0$ for $i=3,4$.
\end{itemize} 
That is, the above decay estimates in {$\mathrm{(i)}$-$\mathrm{(iii)}$} are generically sharp.
 \end{theorem}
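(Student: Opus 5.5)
The proof has four stages: a global bootstrap, an asymptotic analysis of $r\psi$ along null infinity, an inversion of $\Box$ on the leading source terms, and a genericity argument.

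\textbf{Global existence and a priori bounds.} I would first prove global existence with almost-sharp decay by a hyperboloidal vector field bootstrap on the foliation $\mathcal{H}_s=\{t^2-r^2=s^2\}$, using the Lorentz boosts, rotations and translations as commuting fields. The bootstrap hierarchy is:
\begin{itemize}
\item $\psi$ is treated as the ``good'' component. Its source $Q_0(\phi,\phi)$ is a null form, so $\psi$ obeys a uniformly bounded low-order energy and the free decay $|\partial\psi|\lesssim \epsilon v^{-1}u^{-1/2}$.
\item $\phi$ is allowed logarithmic loss, $|\phi|\lesssim \epsilon v^{-1}\ln(v/u)$, and mild $s^{C\epsilon}$ energy growth at top order.
\end{itemize}
The system closes because the weak null structure is triangular: $(\pt\psi)^2$ is quadratic in the good unknown.

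\textbf{Radiation field of $\psi$.} Next I would show that $r\psi$ has a limit at $\mathcal{I}$ and that $\pt(r\psi)(u,v,\omega)$ converges as $v\to\infty$ at rate $v^{-\alpha}$, using the weights in \eqref{est:smallness1}. I would then commute with the angular derivatives and decompose into spherical harmonics.

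\textbf{Leading asymptotics of $\phi$.} For $\phi$, I would write $(\pt\psi)^2 = r^{-2}(\pt(r\psi))^2(u,\infty,\omega)+\text{error}$. The leading source is $r^{-2}F(u,\omega)$ with $F$ integrable in $u$. Solving $-\Box\Phi=r^{-2}F$ explicitly, via the $\ell=0$ projection and the Duhamel formula in null coordinates, $\partial_u\partial_v(r\Phi_{\ell=0}) = r^{-1}F_{\ell=0}$, integrates to produce $\mathfrak{c}_1 r^{-1}(\ln v-\ln u)$ with $\mathfrak{c}_1=\frac12\int F_{\ell=0}\,du$. The error is smaller by $(\ln u)^{-1}$ because the tail $\int_u^\infty F\,du'$ decays in $u$ and the correction to $\pt\psi$ gains $v^{-\alpha}$.

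\textbf{Leading asymptotics of $\psi$ and $\bar\psi$.} For $\psi$, I would work with $\bar\psi$ from \eqref{equ:main-new}, which removes the null form. Its source is $\phi(\pt\psi)^2\approx r^{-3}\,(r\phi)\,F$. Substituting $r\phi\approx \mathfrak c_3\ln(v/u)$ near $\mathcal I$ and integrating twice in null coordinates gives $\mathfrak{c}_2 r^{-1}(\ln u/u-\ln v/v)$. One then recovers $\psi=\bar\psi-\frac12\phi^2$, and $\phi^2$ is negligible exactly in the ranges $r\le u^{1-\delta}/2$ and $r\ge u^{1+\delta}/2$ stated in (iii).

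\textbf{Regions I and II.} In region I the higher modes $\ell\ge1$ decay faster in $u$; I would prove this by an $r^p$-hierarchy or Price-law type argument using the extra $\ell$-gain, so only the $\ell=0$ part survives. In region II, $r\gg e^{u^\delta}$, so angular dependence is retained. There $\ln v$ dominates $\ln u$, and the solution is approximated directionally using the pointwise-in-$\omega$ integrals $\mathfrak c_3,\mathfrak c_4$.

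\textbf{Main obstacle.} The hardest step is controlling the error terms uniformly in the transition zone $u^{1-\delta}\lesssim r\lesssim e^{u^\delta}$ with a gain of exactly $(\ln u)^{-1}$. I would handle this by splitting the Duhamel integral at $v'\sim u^{1\pm\delta}$ and using the $v^{-\alpha}$ convergence rate of the radiation field.

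\textbf{Genericity.} Openness is immediate, since $\mathfrak c_i$ depend continuously on the data through the global solution map. For density:
\begin{itemize}
\item $\mathfrak c_1,\mathfrak c_3$ vanish only if $\pt(r\psi)|_{\mathcal I}\equiv0$ (at $\omega$ for $\mathfrak c_3$). I would perturb $\psi$'s data by $\eta\chi$, so the linear radiation field is nonzero and dominates the nonlinear part for small $\eta$.
\item $\mathfrak c_2$ is continuous and, after a further perturbation of $\phi$'s data, which enters linearly through $r\phi$, changes sign or becomes nonzero, so a second small perturbation gives $\mathfrak c_2\ne0$.
\item $\mathfrak c_4$ is handled the same way.
\end{itemize}
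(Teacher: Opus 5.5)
There is a genuine gap in your $\bar\psi$ step, and it carries over to the genericity of $\cfrak_2,\cfrak_4$. You substitute $r\phi\approx\cfrak_3\ln(v/u)$ near $\mathcal I$, i.e.\ you take $\lim_{v\to\infty} r\phi/\ln v=\cfrak_3(\omega)$. This holds only as $u\to\infty$. Integrate $U(vV\Phi)=vr^{-2}\Deltas\Phi+vr^{-1}(\pt\Psi)^2$ along $U$, as in the proof of Lemma~\ref{lem:preciseofVPhiinRegionII}. At fixed $u$ this gives $vV\Phi\to\int_0^u(\pt\Psi)^2(\sigma,+\infty,\omega)\,\d\sigma$, hence
\[
G(u,\omega):=\lim_{v\to\infty}\frac{r\phi}{\ln v}(u,v,\omega)=\frac12\int_0^u(\pt\Psi)^2(\sigma,+\infty,\omega)\,\d\sigma ,
\]
which increases from $0$ to $\cfrak_3(\omega)$. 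It is this $u$-dependent $G$ that enters $\cfrak_2$ and $\cfrak_4$. The source of $\bar\psi$ lives at bounded $u$, exactly where $G\neq\cfrak_3$. Since $G$ is half the primitive of $(\pt\Psi)^2|_{\mathcal I}$, one has $\int_0^\infty G\,(\pt\Psi)^2\,\d u=\frac12\cfrak_3\int_0^\infty(\pt\Psi)^2\,\d u$. So your substitution returns twice the correct coefficient: in direction $\omega$ you get $2\cfrak_3^2$ instead of $\cfrak_4=\cfrak_3^2$.

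Moreover, replacing $\Phi/\ln v$ by $G$ inside the source requires a rate, and that rate is only logarithmic, $\Phi/\ln v-G=O(\ep\ln^2u/\ln v)$. Your ``$v^{-\alpha}$ convergence of the radiation field'' says nothing about $r\phi$, which has no limit. The paper gets the rate from the cancellation in $X(\Phi,\Psi)=U\Phi-\ln v\,(\pt\Psi)^2$. Integrating \eqref{eq:non-trans} in $v$ gives $|X|\lesssim\ep u^{-1}\ln u$ with no $\ln v$ loss (estimate \eqref{est:Firsttech}), and hence $|V(\Phi/\ln v)|\lesssim\ep\ln^2u/(v\ln^2v)$ (Lemma~\ref{le:derivativePhiPsi}). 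This slow $1/\ln v$ convergence is the main source of the logarithmic-type errors for $\bar\psi$. The transition zone is not the real obstacle: there the theorem only concerns the $\ell=0$ mode, which is a $1+1$ problem.

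The same identity is what your density argument for $\cfrak_2,\cfrak_4$ lacks. A perturbation of $\phi$'s data does not enter $\cfrak_2$ linearly. Its free part has bounded $r\phi$ and drops out of $r\phi/\ln v$ at $\mathcal I$, because $G$ is determined by $\pt\Psi|_{\mathcal I}$ alone. The data of $\phi$ reach $\cfrak_2$ only through the null-form coupling into $\psi$, at higher order. The correct observation is that $\partial_u G=\frac12(\pt\Psi)^2|_{\mathcal I}$, so
\[
\cfrak_4(\omega)=\int_0^\infty G\,(\pt\Psi)^2(u,+\infty,\omega)\,\d u=2\int_0^\infty G\,\partial_u G\,\d u=\cfrak_3(\omega)^2 .
\]
Hence $\cfrak_2=(\cfrak_3^2)_{\ell=0}\geq 0$ never changes sign, and it vanishes iff $\cfrak_3\equiv0$ iff $\cfrak_1=0$; likewise $\cfrak_4(\omega)=0$ iff $\cfrak_3(\omega)=0$. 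This is how the paper reduces density of $\Sfrak_{\neq0,2}$ to that of $\Sfrak_{\neq0,1}$ in Section~\ref{subsect:density:genericity:G2neq0}. (Its display there has $\frac14\int_0^u(U\Psi)^2$ where $\partial_u=\frac12U$ gives $\frac18$, but only positivity of the constant is used.)

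Your argument for $\cfrak_1$ is a legitimate, more direct alternative to the paper's proof by contradiction, with one correction. The background has $\pt\Psi|_{\mathcal I}\equiv0$, so the perturbed radiation field is $\eta$ times the radiation field of the free wave with data $\chi$, plus a difference term of size $O(\ep^2\eta)$ in $L^2(\mathcal I)$. The latter bound comes from difference energy estimates with flux through $\mathcal I$, the same identity the paper uses. It is therefore the smallness of $\ep$, not of $\eta$, that lets the linear part dominate.

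Finally, Region I needs no $r^p$ or Price-law argument. A uniform faster $u$-decay of the $\ell\geq1$ modes actually fails near the cone here, cf.\ Theorem~\ref{thm:higher}. The angular elliptic estimate for $\Deltas\phi_{\geq1}=r^2UV\phi_{\geq1}-2r\pr\phi_{\geq1}-r^2[(\pt\psi)^2]_{\geq1}$ with $r\lesssim u^{1-\delta}$ suffices.
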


In Theorem \ref{thm:main1}, the above {$\mathrm{(i)}$-$\mathrm{(iii)}$} are proved in Section \ref{sec:precise-decay}, and the last statement on the genericity of the decay estimates is established in Section \ref{sec:generic}.

Note that in Theorem \ref{thm:main1}, the precise decay of the zeroth-order mode $(\phi_{\ell=0}, \bar{\psi}_{\ell=0})$ in the full spacetime \textrm{region} $\mathrm{i}$s illustrated in {$\mathrm{(iii)}$}, while, for the full solution $(\phi, \psi)$,
as illustrated in $\mathrm{(i)}$ and $\mathrm{(ii)}$, we are only able to provide the precise pointwise decay in the regions $\mathrm{I}$ and $\mathrm{II}$. The main reason, that we can not derive its precise pointwise decay in the complementary region of $\mathrm{I}\cup\mathrm{II}$, is that all orders of the modes of the solution in fact decay at the same rate near the light cone, say $|r-t|\leq \frac{1}{2} t$, which does not allow us to sum up over all the modes to deduce the precise leading order behavior for the solution. 
Our main result for the higher order modes, say $\phi_{\ell}$ with $\ell\geq 1$, is as follows.

\begin{theorem}\label{thm:higher}
Consider the Cauchy problem \eqref{equ:main}--\eqref{eq:ID}, and let the smallness condition \eqref{est:smallness1} for the initial data hold. For positive integers $\ell\leq N-3$, define $\mathcal{M}_{\ell} :=\MM\cap\{ r\geq u^{1-\delta_\ell} \}$ with $\delta_\ell \leq \min\{ \frac{\delta}{2},\frac{1}{4\ell + 4}\}$. Then in $\mathcal{M}_{\ell}$ and for $u\geq 2$ large, we have
\begin{align}
    \bigg|\phi_\ell(t, x) - \frac{(-1)^{\ell+1}}{2r} \mathcal{C}_\ell \mathcal{D}_\ell(ur^{-1}) \bigg|
    \lesssim_\ell
    \ep r^{-1} u^{-\delta_{\ell}},
\end{align}
in which
\begin{align*}
    \mathcal{C}_\ell
    =
    \int_{\mathcal{I}} \left((\partial_t \Psi)^2\right)_\ell,
    \qquad
    \mathcal{D}_\ell(ur^{-1})
    =\int_{ur^{-1}}^{+\infty} {(z-ur^{-1})^{\ell} \over z^{\ell+1} (1+ {z\over 2})^{\ell+1}} \, dz.
\end{align*}
\end{theorem}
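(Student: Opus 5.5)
We obtain the asymptotics of $\phi_\ell$ from an explicit representation of the $\ell$-th mode of the radial wave equation, with source $F:=(\pt\psi)^2$ (here $\Psi:=r\psi$). Writing $\Phi_\ell:=r\phi_\ell$, the first equation of \eqref{equ:main} projected onto the $\ell$-th spherical harmonic becomes
\[
-4\partial_u\partial_v\Phi_\ell+\frac{\ell(\ell+1)}{r^2}\Phi_\ell=(rF)_\ell .
\]
Its solution in $\MM$ is given by a Duhamel formula with the $1+1$ kernel involving the Legendre polynomial $P_\ell$: the contribution of a source point $(u',v')$ to $(u,v)$ is $\tfrac12 P_\ell(\mu)$, where $\mu$ is a rational function of $u,v,u',v'$ (equivalently, the spherical-means formula restricted to modes). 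We shall use the precise decay already available for $\psi$, namely $(\pt\Psi)(u,v,\omega)\to(\pt\Psi)(u,+\infty,\omega)$ with rate $(\pt\Psi)(u',v')-(\pt\Psi)(u',\infty)=O(\ep u'^{1/2}v'^{-1}\cdots)$ and $|\pt\Psi|\lesssim\ep\langle u'\rangle^{-1-}$, together with the corresponding bounds on $\phi$ itself. The contribution of the initial data will be bounded by $\ep r^{-1}u^{-1}$ using the decay assumption \eqref{est:smallness1}, which is better than the claimed error.

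\textbf{Main term.} Near null infinity $rF\approx r^{-1}(\pt\Psi)^2(u',\infty,\omega)$, so we substitute $(rF)_\ell\approx 4(v'-u')^{-2}\,g_\ell(u')$ with $g_\ell:=((\pt\Psi)^2)_\ell(u',\infty)$. The region $u'\ll u$ dominates, since $g_\ell$ is integrable. We then integrate in $v'$ from about $u'$ up to $v$, and in $u'\le u$ up to the cone. For $u'\ll u$ the kernel depends only on the ratio $u/r$ and on the rescaled variable $z$. Concretely, for $u'$ fixed and small compared to $u$, the $v'$-integral of $P_\ell(\mu)(v'-u')^{-2}$ over $v'\in[u,v]$ should rescale, after setting $v'-u\sim$ and $z:=2(v'-u)/(v'...)$, to $(-1)^{\ell+1}\mathcal D_\ell(u/r)$ up to the factor $\tfrac12 r^{-1}$. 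To check the exact form $(z-u/r)^\ell z^{-\ell-1}(1+z/2)^{-\ell-1}$, we will use the Rodrigues-type identity $P_\ell$ and the explicit value of $\mu$ for $u'=0$; equivalently, we verify that $r^{-1}\mathcal D_\ell(u/r)$ solves the homogeneous $\ell$-mode equation away from the vertex, with the correct jump across the cone $u=0$ fed by a $\delta(u')$ source of mass $\mathcal C_\ell$. Summing over $u'$ then gives $\mathcal C_\ell=\int g_\ell\,du'$, producing the main term $\frac{(-1)^{\ell+1}}{2r}\mathcal C_\ell\mathcal D_\ell(ur^{-1})$.

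\textbf{Errors.} We split the source integral into three pieces.

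1. \emph{Tail in $u'$.} The part with $u'\ge u^{1-\delta_\ell}$ is controlled by the tail $\int_{u^{1-\delta_\ell}}|g_\ell|\lesssim\ep u^{-c}$.
2. \emph{Replacing $u'$ by $0$.} For $u'\le u^{1-\delta_\ell}$, replacing $u'$ by $0$ in the kernel costs $O(u'/u)$ times derivatives of the kernel. Since $|P_\ell'|\lesssim\ell^2$ and the factor $(u/r)^{-1}$ is compensated on $\mathcal M_\ell$, this loses at most powers $(r/u)^{\ell+1}\le u^{(\ell+1)\delta_\ell}$. The condition $\delta_\ell\le\frac1{4\ell+4}$ is exactly what keeps this loss below $u^{1/4}$ against a gain of $u^{-\delta_\ell}$.
3. \emph{Replacing the source by its limit at $\mathcal I$.} The error from substituting $(\pt\Psi)^2(u',\infty)$ is bounded using the $v'^{-1}$ convergence rate, giving $O(\ep r^{-1}u^{-\delta_\ell})$ after integration. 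The region near the center, $r'\lesssim u'$, contributes $O(\ep r^{-1}u^{-1+})$ by the interior decay of $\psi$.

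The constraint $\ell\le N-3$ is needed so that the spherical mode projections of $(\pt\psi)^2$ have enough angular regularity, via rotation vector fields, for the pointwise bounds to hold uniformly.

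\textbf{Main obstacle.} I expect the hardest step to be the exact kernel computation identifying $\mathcal D_\ell$ and the sign $(-1)^{\ell+1}$, together with uniform control of $P_\ell(\mu)$ as $\mu$ ranges over $[-1,1]$ near the light cone, where cancellation matters. My first attempt will be the ODE verification: show that $r^{-1}\mathcal D_\ell(u/r)$ satisfies the $\ell$-mode homogeneous equation, and match its behavior as $u/r\to0$, where $\mathcal D_\ell(0)=\int_0^\infty(1+z/2)^{-\ell-1}dz=2/\ell$, with the Duhamel contribution of a concentrated source near $u'=0$.
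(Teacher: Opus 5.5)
Your Kirchhoff/Legendre-kernel route is genuinely different from the paper's and can be made to work. As written, however, its central step, the main term, is both missing and set up incorrectly, in a way that loses exactly the logarithmic growth that $\mathcal D_\ell$ encodes.

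\textbf{The source is off by a power of $r'$.} Since $\pt$ commutes with $r$, you have $rF=r(\pt\psi)^2=r^{-1}(\pt\Psi)^2$. Near $\mathcal I$ this gives $(rF)_\ell\approx 2(v'-u')^{-1}g_\ell(u')$, not $4(v'-u')^{-2}g_\ell(u')$. With your power the $v'$-integral is $O(u^{-1})$, and no $O(1)$, log-growing profile can come out.

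\textbf{$\mathcal D_\ell(0)$ is not $2/\ell$.} You dropped the factor $z^{-1}$; in fact $\mathcal D_\ell(x)=\ln(1/x)+O_\ell(1)$ as $x\to0^+$. So the proposed matching at $u/r\to0$ cannot work. The mode equation also has a plus sign: $4\partial_u\partial_v\Phi_\ell+\ell(\ell+1)r^{-2}\Phi_\ell=(rF)_\ell$.

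\textbf{The missing computation is short.} For the part with zero data on $\mathcal H_1$, $\Phi_\ell(u,v)=\frac14\iint_D P_\ell(\mu)\,(rF)_\ell\,du'\,dv'$, with $D=\{u'\le u,\ u\le v'\le v\}$ and $\mu=\frac{r^2+r'^2-(t-t')^2}{2rr'}$. For fixed $u'$, the map $v'\mapsto\mu$ is M\"obius with $\mu(u)=-1$ and $\mu(v)=1$, and $\frac{dv'}{v'-u'}=\frac{d\mu}{A-\mu}$ with $A=\frac{t-u'}{r}$. Neumann's formula $Q_\ell(A)=\frac12\int_{-1}^1\frac{P_\ell(\mu)}{A-\mu}\,d\mu$ then turns the main term into $\int_0^u g_\ell(u')\,Q_\ell\big(\tfrac{t-u'}{r}\big)\,du'$. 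Finally, $\frac12\mathcal D_\ell(x)=Q_\ell(1+x)$: both have $(\ell+1)$-st derivative $(-1)^{\ell+1}2^\ell\ell!\,(x(x+2))^{-\ell-1}$, and both vanish as $x\to\infty$.

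\textbf{The sign.} Done correctly, this gives $\phi_\ell\approx+\frac1{2r}\mathcal C_\ell\mathcal D_\ell(u/r)=r^{-1}\mathcal C_\ell Q_\ell(t/r)$ for every $\ell$. Your plan to confirm the factor $(-1)^{\ell+1}$ will therefore fail for even $\ell$, and that is not a defect of your method. Projecting Theorem \ref{thm:main1}(ii) onto the $\ell$-th mode gives $\phi_\ell\approx\frac12\mathcal C_\ell\phi_L$ in Region II, where $\mathcal D_\ell(u/r)=\ln\frac vu+O_\ell(1)$. In the paper's own proof, integrating $(-\partial_R^{BS})^{\ell+1}\Phi$ downward in $R$ from $\Gamma$ returns $+\Phi$ plus boundary terms, so the $(-1)^{\ell+1}$ written there is a slip.

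\textbf{Error analysis.} Your step 2 has the inequality reversed: in $\mathcal M_\ell$ one has $u/r\le u^{\delta_\ell}$, while $r/u$ is unbounded. No such loss occurs anyway. Since $|Q_\ell'(b)|\lesssim_\ell(b-1)^{-1}$, you get $|Q_\ell(\frac{t-u'}{r})-Q_\ell(\frac tr)|\lesssim_\ell\ln\frac{u}{u-u'}$, which is harmless against $|g_\ell|\lesssim\ep^2\langle u'\rangle^{-3}$. The bound $|P_\ell|\le1$ handles the remaining errors with no cancellation needed. This requires taking the rate of $\pt\Psi(u',v')\to\pt\Psi(u',\infty)$ from the equation, $2V\pt\Psi=VU\Psi+VV\Psi$ with $VU\Psi=r^{-2}\Deltas\Psi+rQ_0(\phi,\phi)$. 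Applying \eqref{est:almost1} term by term only gives a non-integrable $v^{-1}$.

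Also, the tail $\int_u^\infty g_\ell$ multiplies $Q_\ell(t/r)\approx\frac12\ln\frac vu$. That error is therefore relative to $\phi_L$, not an absolute $\ep u^{-c}$: at fixed $u$, $\Phi_\ell\sim\frac12(\int_0^u g_\ell)\ln v$ as $v\to\infty$.

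\textbf{Comparison with the paper.} The paper avoids kernels altogether. Commuting $r^2UV\Phi_\ell+\ell(\ell+1)\Phi_\ell=r((\pt\Psi)^2)_\ell$ with $(r^2V)^\ell$ gives the transport equation $U\big(v^{\ell+1}r^{-2\ell-2}(r^2V)^{\ell+1}\Phi_\ell\big)=v^{\ell+1}r^{-2\ell-2}(r^2V)^\ell\big(r((\pt\Psi)^2)_\ell\big)$. Its $u$-integral reads off $2^\ell\ell!\,\mathcal C_\ell$ at $\mathcal I$. Then $\ell+1$ integrations in $R=1/r$ from $\Gamma=\{r=u^{1-2\delta_\ell}\}$ produce $\mathcal D_\ell$ by Cauchy's repeated-integration formula.

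That route needs no explicit kernel, but it pays the weights $v^{\ell+1}r^{-\ell-2}$ and $\ell$ extra derivatives of the source. This is where $\delta_\ell\le\frac1{4\ell+4}$ and $\ell\le N-3$ come from. Yours, once repaired, yields the closed form $r^{-1}\mathcal C_\ell Q_\ell(t/r)$ without tying $\delta_\ell$ to $\ell$ or $\ell$ to $N$, since a mode projection is controlled by the $L^2(\mathbb S^2)$ norm.
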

 
 The proof of Theorem \ref{thm:higher} is contained in Section \ref{sec:appendix}.
 
\begin{remark}
    One can analyze the behavior of $\mathcal{D}_\ell({u r^{-1}})$, which reads
    \begin{align*}
        |\mathcal{D}_\ell({u r^{-1}})| 
        \simeq & u^{-1-\ell \kappa_1},
        \qquad
        &&\text{ for }  r\simeq u^{1-\kappa_1}, \, 0<\kappa_1\leq {\delta\over 4},
        \\
        |\mathcal{D}_\ell({u r^{-1}})| 
        \simeq &1,
        \qquad
        &&\text{ for }  r \simeq u,
        \\
        |\mathcal{D}_\ell({u r^{-1}})| 
        \simeq & \kappa_2 \ln u,
        \qquad
        &&\text{ for }  r \geq u^{1+\kappa_2}, \, \kappa_2>0.
    \end{align*}
    It demonstrates how the decay grows as $r$ goes from near origin to $r\simeq u$ and to $r \gg u$. 
    
    The constraint $\ell \leq N-3$ is due to the analysis in the proof where we use pointwise information of the solution $(\phi, \psi)$ and thus lose derivatives.
\end{remark}

\subsection{Application to generic energy cascade and blow-up at infinity}

We are now ready to state the following theorem on the generic blow-up at infinity and energy cascade of the field $\phi$, which is proved in Section \ref{sec:growth}.

\begin{theorem}[Generic energy cascade and blow-up at infinity]\label{thm:blow-up} 
Let the smallness condition \eqref{est:smallness1} for the initial data hold, and assume moreover that the initial data are compactly supported\footnote{By saying that the initial data is compactly supported on $\mathcal{H}_1$, we mean there exists an $R_0>0$ such that the initial data can be extended in a way such that it is defined on $(\mathcal{H}_1\bigcap \{ r\leq R_0+1 \})\bigcup \{t=\sqrt{(R_0 + 1)^2 + 1}, \, r\geq R_0 +1 \}$ and vanishes for $r\geq R_0$.}. Then, the following statements are valid.
 \begin{description}
\item [ (i) Generic energy cascade and blow-up at infinity of $\phi$]
         The following lower and upper bounds 
         \begin{equation}
         \begin{aligned}
              \mathfrak{c}_1 t^{1\over 2}&\lesssim \| \phi \| \lesssim \ep t^{\frac{1}{2}} \ln t,\\
              \mathfrak{c}_{5}\ln t&\lesssim \| \partial \phi \| \lesssim \epsilon \ln t,
              \end{aligned}
         \end{equation}
         with $\mathfrak{c}_5$ given by
         \begin{equation}
         \label{def:cfrak5constant}
    \mathfrak{c}_{5}=\bigg( \int_{\mathbb{S}^2} \int_0^{+\infty} (\partial_t(r\psi))^4(u,+\infty,\omega)\d u \d \omega \bigg)^{1\over 2},
\end{equation}
hold for sufficiently large $t$ and imply that the following holds generically\footnote{In view of Theorem \ref{thm:main1}, $\mathfrak{c}_1 > 0$ holds generically, that is, $\mathfrak{c}_1 > 0$ holds for an open and dense subset of the admissible initial data set. This implies that $\mathfrak{c}_5>0$ holds generically as well.} 
\begin{itemize}
    \item the component $\phi$ exhibits energy cascade from high to low frequencies,
    \item and both $L^2$-norms of $\phi$ and $\partial\phi$  blow up at time infinity.
\end{itemize} 

       \smallskip
 \item [ (ii) Uniform bounds of $\psi$]
As a comparison, the field $(\psi,\partial \psi)$ admits uniform-in-time $L^2$ norms
\begin{equation}
    \| \psi \| + \| \partial\psi \| \lesssim \epsilon.
\end{equation}
\end{description}
\end{theorem}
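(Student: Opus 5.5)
We prove Theorem \ref{thm:blow-up} by integrating the pointwise asymptotics of Theorem \ref{thm:main1} over the slice $\{t=\mathrm{const}\}$. Compact support of the data and finite speed of propagation confine the solution, for large $t$, to $\{r\le t+R_0\}$. We split this ball into three parts:
\begin{itemize}
\item the exterior strip $\{t-R_0\le r\le t+R_0\}$, where $u=O(1)$;
\item the wave zone $\{t/2\le r\le t-R_0\}$;
\item the interior $\{r\le t/2\}$.
\end{itemize}
Throughout we use the bootstrap pointwise bounds from the global existence part, namely
\[
|\phi|\lesssim \epsilon\, r^{-1}\ln(v/u),\qquad |\partial\phi|\lesssim \epsilon\, r^{-1}u^{-1}.
\]
The second bound may carry an extra logarithm. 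Its $t$-component is refined below.

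\textbf{Bounds for $\|\phi\|$.} For the upper bound, insert $|\phi|\lesssim \epsilon\, r^{-1}\ln(v/u)$:
\[
\int_0^{t+R_0} r^{2}\,r^{-2}\ln^2\!\bigl(\tfrac{t+r}{t-r+1}\bigr)\,\d r .
\]
Near $r\approx t$ the integrand is $\ln^2(t/u)$, and $\int_1^t \ln^2(t/u)\,\d u\lesssim t$. So we obtain at least $\epsilon^2 t\ln^2 t$, which gives $\|\phi\|\lesssim \epsilon t^{1/2}\ln t$.

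For the lower bound we use Theorem \ref{thm:main1}(iii) and orthogonality of spherical harmonics, so that $\|\phi\|\ge\|\phi_{\ell=0}\|$. We restrict to $r\le \tfrac12 u^{1-\delta}$, where $u\simeq t$ and $\phi_{\ell=0}\ge \tfrac12\cfrak_1\phi_L$ once $\epsilon(\ln u)^{-1}$ is small relative to $\cfrak_1$. There
\[
\phi_L = r^{-1}\ln\frac{t+r}{t-r}\simeq r^{-1}\cdot\frac{2r}{t}=\frac{2}{t}.
\]
This region has volume $t^{3(1-\delta)}$, so it contributes only $\cfrak_1^2\, t^{1-3\delta}$, which is slightly too weak. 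We therefore use instead the range $\tfrac12 t^{1+\delta}\lesssim\ldots$ of (iii), i.e.\ $r\ge \tfrac12 u^{1+\delta}$, which is valid for all $u\le (2r)^{1/(1+\delta)}$. On the slab $1\le u\le t^{1/2}$ we have $\phi_{\ell=0}\gtrsim \cfrak_1 r^{-1}\ln(v/u)\gtrsim \cfrak_1 t^{-1}\ln t$. Integrating, $\int r^2\phi^2\,\d r\gtrsim \cfrak_1^2 t^{1/2}\ln^2 t$, which is still weak. A better choice is to take the region $u\in[1,t/K]$ with $r\ge u^{1+\delta}$, which covers most of it. There $\phi_{\ell=0}\ge c\,\cfrak_1 t^{-1}\ln(t/u)$, and $\int_1^{t/K}\ln^2(t/u)\,\d u\gtrsim t$. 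This gives $\|\phi\|^2\gtrsim \cfrak_1^2 t$. The region $t^{1-\delta'}\lesssim u\le t/K$ requires $r\ge u^{1+\delta}$, which fails when $u\sim t$. We therefore keep only $u\le t^{1/(1+\delta)}/2$; on this range $\ln(t/u)\ge \tfrac{\delta}{1+\delta}\ln t$, and we fall back to the interior/intermediate matching. The cleanest route is to use (iii) on all of $\MM$ for $\phi_{\ell=0}$; the first display of (iii) holds without the restriction, since only $\psi_{\ell=0}$ needs it. With that, $\|\phi_{\ell=0}\|^2\gtrsim \cfrak_1^2\int_1^{t/2} t^{2}t^{-2}\ln^2(t/u)\,\d u\gtrsim \cfrak_1^2 t$.

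\textbf{Bounds for $\|\partial\phi\|$.} The upper bound follows from $|\partial\phi|\lesssim\epsilon r^{-1}u^{-1}$ near the cone and $\epsilon t^{-1}u^{-1}$ inside, which give $\int_1^t u^{-2}\,\d u$ bounded. Allowing the $\ln$-loss in the bootstrap gives at most $\epsilon\ln t$, which we will verify. The lower bound is the main obstacle. We plan to derive the $\partial_u$ asymptotics of $r\phi$ near $\mathcal I$ from the equation:
\[
\partial_u\partial_v(r\phi)\approx \tfrac14 r^{-1}(\partial_t(r\psi))^2(u,\infty,\omega).
\]
Integrating in $v$ from the cone gives
\[
\partial_u(r\phi)\approx -\tfrac14\,(\partial_t(r\psi))^2(u,\infty,\omega)\,\ln(v/u).
\]
Hence on $\{t/2\le r\}$ we get $|\partial\phi|\gtrsim r^{-1}(\partial_t(r\psi))^2(u,\infty,\omega)\ln t$, and
\[
\|\partial\phi\|^2\gtrsim \ln^2 t\int_{\mathbb S^2}\int_0^{t/2}(\partial_t(r\psi))^4\,\d u\,\d\omega\to \cfrak_5^2\ln^2 t .
\]
The error terms are controlled by the faster $u$-decay of the remainders from Theorem \ref{thm:main1}.

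\textbf{Energy cascade.} The frequency shift follows from comparing norms. With $\|\phi\|\gtrsim t^{1/2}$ and $\|\partial\phi\|\lesssim\epsilon\ln t$, the ratio $\|\partial\phi\|/\|\phi\|\to0$, so the mass concentrates at frequencies $\lesssim t^{-1/2}\ln t$. Meanwhile the high-frequency part is bounded using $\|\partial^2\phi\|\lesssim\epsilon$ from the higher-order energy estimates.

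\textbf{Uniform bounds for $\psi$.} We use $\psi=\bar\psi-\tfrac12\phi^2$. The source $\phi(\partial_t\psi)^2$ is integrable in time in $L^2$, since $\|\phi\partial_t\psi\|_{L^\infty}\lesssim t^{-1-}$. This gives $\|\partial\bar\psi\|\lesssim\epsilon$. Then $\|\bar\psi\|$ is bounded by Hardy's inequality combined with the pointwise bound $r^{-1}$ on $\bar\psi$ from Theorem \ref{thm:main1}(ii) over $r\le t+R_0$; the estimate $|\bar\psi|\lesssim\epsilon r^{-1}u^{-1/2}$ makes it square integrable uniformly. Finally, $\phi^2$ and $\phi\,\partial\phi$ are bounded in $L^2$ using the pointwise bounds above ($\int r^{-2}\ln^4\cdot r^2\,\d r\cdot t^{-2}$ is bounded).
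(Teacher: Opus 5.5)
Your bounds for $\|\phi\|$ follow the paper, once you settle on the global asymptotics of $\phi_{\ell=0}$ from Theorem~\ref{thm:main1}(iii) together with $\|\phi\|\ge\|\phi_{\ell=0}\|$. Both upper bounds also follow the paper. The gap is in the lower bound $\|\partial\phi\|\gtrsim\cfrak_5\ln t$, which is the heart of the theorem.

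\textbf{The pointwise claim and your error reasoning.} The pointwise claim $|\partial\phi|\gtrsim r^{-1}(\pt(r\psi))^2(u,+\infty,\omega)\ln t$ on $\{r\ge t/2\}$ does not follow, and your reason for discarding the errors is backwards.
\begin{enumerate}
\item Theorem~\ref{thm:main1} says nothing about $U\Phi$; its remainders for $\phi$ cannot be differentiated.
\item The remainder in $U\Phi$ decays \emph{more slowly} in $u$ than the main term. The available bound on it is $O(\ep u^{-1}\ln u)$; formally, the $-\cfrak_1\ln u$ part of $\Phi\approx\cfrak_1(\ln v-\ln u)$ contributes $\sim\cfrak_1u^{-1}$. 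By contrast, $\ln v\,(\pt\Psi)^2\lesssim\ep^2u^{-4}\ln^2u\,\ln t$.
\item Pointwise, the error therefore dominates once $u\gg(\ln t)^{1/3}$.
\end{enumerate}

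\textbf{What is actually needed.} The error is harmless not because it decays faster, but because it carries \emph{no $\ln v$ factor}. This has to be proved. The paper shows that $X(\Phi,\Psi)=U\Phi-\ln v\,(\pt\Psi)^2$ satisfies $|X|\lesssim\ep u^{-1}\ln u$ on all of $\MM$ (estimate~\eqref{est:Firsttech}). The proof integrates~\eqref{eq:non-trans} in $v$ starting from the curve $r=\frac12u^{1+2\de}$, where the almost sharp decay bounds the boundary value by $\ep u^{-1}\ln u$. Inside that curve it uses $\ln v\sim\ln u$. Since $u^{-1}\ln u\in L^2(\d u)$, this gives $\|r^{-1}X\|\lesssim\ep$ uniformly in $t$. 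The lower bound then comes from the reverse triangle inequality in $L^2$:
\[
\|\partial\phi\|\gtrsim\|r^{-1}U\Phi\|\ge\ln t\,\|r^{-1}(\pt\Psi)^2\|-\|r^{-1}X\|.
\]
The ingredients in this display are:
\begin{enumerate}
\item Hardy's inequality handles the $r^{-1}\phi$ term in $r^{-1}U\Phi=U\phi-r^{-1}\phi$.
\item On the slice one has $\ln v\ge\ln t$. Your $\ln(v/u)$ is not $\gtrsim\ln t$ when $u\sim t$.
\item $\|r^{-1}(\pt\Psi)^2\|^2\to\cfrak_5^2$. Your pointwise replacement by the value on $\mathcal I$ works here, using $|V\pt\Psi|\lesssim\ep v^{-2}u^{-1}\ln^2v$ from the $\Psi$-equation. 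The paper instead uses the identity for $V((U\Psi)^4)$.
\end{enumerate}
Your $v$-integration of $\partial_u\partial_v(r\phi)$ is the right tool for the $X$-bound. It must, however, be organized to produce exactly this $\ln v$-free estimate, not a pointwise comparison with the main term.

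\textbf{Smaller slips.}
\begin{enumerate}
\item In (ii), the bound $|\bar\psi|\lesssim\ep r^{-1}u^{-1/2}$ gives $\|\bar\psi\|^2\lesssim\ep^2\int_1^t u^{-1}\,\d u\sim\ep^2\ln t$, which is not uniform in $t$. Hardy controls $\|r^{-1}\bar\psi\|$, not $\|\bar\psi\|$. The detour through $\bar\psi$ is unnecessary: the pointwise bounds of Proposition~\ref{prop:existence} for $\psi$ and $\partial\psi$ integrate directly to $O(\ep^2)$ over the slice, which is the paper's argument.
\item In your cascade remark, $\|\partial^2\phi\|\lesssim\ep$ is not available. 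The energy estimate only gives $\ep+\ep^2\ln t$, and the same mechanism makes $U^2\Phi$ contain $\ln v\,U((\pt\Psi)^2)$, so it should fail generically. You do not need it: the paper's cascade statement rests only on $\|\partial\phi\|/\|\phi\|\lesssim(\ep/\cfrak_1)t^{-1/2}\ln t\to0$.
\item The log-free bound $|\partial\phi|\lesssim\ep r^{-1}u^{-1}$ you start from would give $\|\partial\phi\|\lesssim\ep$, which is incompatible with $\cfrak_5>0$. The $\ln v$ in Proposition~\ref{prop:existence} is essential, not a removable loss.
\end{enumerate}
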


\begin{remark}
As we know, most of the results concerning the estimates of the norms of solutions prove only upper bounds. Here, our result provides a lower bound that grows to infinity generically, which implies the generic blow-up at infinity phenomenon for both $\|\phi\|$ and $\|\partial\phi\|$.
\end{remark}

\begin{remark}
   The statement that the lower bound of $\|\partial \phi\|$ generically grows to infinity as time evolves to infinity also asserts that the component $\phi$ does not scatter in the energy space generically.
\end{remark}

\begin{remark}
By a similar argument, one can remove the compact support requirement for the initial data and show the natural energy of $\phi$ on hyperboloids (see Section \ref{SS:nota} for its definition) also admits a lower bound that grows to infinity as the hyperbolic time $s=\sqrt{t^2-r^2}$ goes to $+\infty$.
\end{remark}

\subsection{Relation to the Einstein equations in the wave gauge}
\label{subsect:VEEinwavegauge}

It is well-known (c.f. \cite[Chapter VI.7]{YCB}) that the vacuum Einstein equations reduce to a system of quasilinear wave equations enjoying a weak null structure within a specially constructed coordinate chart, called the wave gauge. Specifically, imposing a wave gauge in which the coordinates satisfy the wave equations
\begin{equation}\label{eq1-19-jan-2026-YM}
\Box_{g} x^{\alpha} = 0,
\end{equation}
the vacuum Einstein equations are transformed into a system of wave equations
\begin{equation}\label{eq2-19-jan-2026-YM}
g^{\mu\nu}\partial_{\mu}\partial_{\nu}g_{\alpha\beta} = F_{\alpha\beta}(g,g,\partial g,\partial g) 
\end{equation}
with $F_{\alpha\beta}$ a collection of quadratic and higher-order (i.e., at least cubic) terms of $g,\partial g$. Subtracting the Minkowski metric in the sense of (detail can be found in, for instance,  \cite[page 15]{LindbladRodnianski2010})
$$
h_{\alpha\beta} := g_{\alpha\beta} - \eta_{\alpha\beta},
$$ 
the wave system \eqref{eq2-19-jan-2026-YM} can be reduced to 
\begin{equation}\label{eq3-19-jan-2026-YM}
\Box h_{\alpha\beta} = \frac{1}{4}\partial_{\alpha}\big(\eta^{\mu\nu}h_{\mu\nu}\big)\partial_{\beta}\big(\eta^{\mu\nu}h_{\mu\nu}\big) + S_{\alpha\beta},
\end{equation}
where $S_{\alpha\beta}$ are composed of semilinear quadratic terms of $\partial h_{\alpha\beta}$, quasilinear terms of type $h\partial\partial h$ and higher-order terms\footnote{Those semilinear terms violating the null conditions and the quasilinear terms can be treated via other techniques, including the wave gauge condition, integration by parts, etc.}.

Furthermore, by contracting the wave equation \eqref{eq3-19-jan-2026-YM} with the Minkowski inverse metric $\eta^{\alpha\beta}$, we obtain
\begin{equation}\label{eq4-19-jan-2026-YM}
\Box(\eta^{\alpha\beta}h_{\alpha\beta}) = \frac{1}{4}\eta^{\alpha\beta}\partial_{\alpha}\big(\eta^{\mu\nu}h_{\mu\nu}\big)\partial_{\beta}\big(\eta^{\mu\nu}h_{\mu\nu}\big) + \eta^{\alpha\beta}S_{\alpha\beta}
\end{equation}
with the right-hand side containing only null semilinear quadratic terms, quasilinear terms, and higher-order terms.
In view of  \eqref{eq3-19-jan-2026-YM} and \eqref{eq4-19-jan-2026-YM}, the components $h_{00}$ and $\eta^{\alpha\beta}h_{\alpha\beta}$  solve the following coupled wave system
\begin{equation}
\label{eq:h00andtrh:semiwaveweaknull:vee}
\left\{
\aligned
&\Box h_{00} = \frac{1}{4}\partial_t\big(\eta^{\mu\nu}h_{\mu\nu}\big)\partial_t\big(\eta^{\mu\nu}h_{\mu\nu}\big) + S_{00},
\\
&\Box(\eta^{\alpha\beta}h_{\alpha\beta}) = \frac{1}{4}\eta^{\alpha\beta}\partial_{\alpha}\big(\eta^{\mu\nu}h_{\mu\nu}\big)\partial_{\beta}\big(\eta^{\mu\nu}h_{\mu\nu}\big) + \eta^{\alpha\beta}S_{\alpha\beta}
\endaligned
\right.
\end{equation}
which enjoys a similar structure as \eqref{equ:main} once making the replacement $(h_{00}, \eta^{\alpha\beta}h_{\alpha\beta})\to (\phi,\psi)$, with the null quadratic term $Q_0(\phi,\phi)$ in the equation of $\psi$ standing for the null terms on $h_{00}$ contained in 
$\eta^{\alpha\beta}S_{\alpha\beta}$. 
The decay rates of the rest components of $h_{\alpha\beta}$ are not slower than that of $h_{00}$, i.e., the dominant behavior of the metric perturbation will be driven by the behavior of $h_{00}$, an analogue of the scalar $\phi$ in the system of wave equations \eqref{equ:main} satisfying the weak null condition. This motivates us to consider the wave system \eqref{equ:main} satisfying the weak null condition.

\subsection{Relevant literature}
The study of nonlinear wave equations has a long history, and we briefly review the existing literature relevant to this paper.  According to the pioneering work of John \cite{John}, it is known that general quadratic nonlinearities might lead to small data blow-up in finite time, while nonlinearities under the null condition by Klainerman \cite{Klainerman86} and Christodoulou \cite{Christodoulou86} ensure small data global existence. Later on, the null condition on the quadratic nonlinearities to ensure small data global existence is relaxed; see the weak null condition of Lindblad-Rodnianski \cite{LindbladRodnianski2005,LindbladRodnianski2010}, the non-resonant condition of Pusateri-Shatah \cite{PusateriShatah2013}, etc. In particular, we mention some further results \cite{Alinhac2003, Lindblad2008, Katayama2015, DengPusateri2020, Keir18} related to the weak null condition.

Due to dispersion, the magnitude of linear homogeneous waves in $\mathbb{R}^{1+3}$ decays as time evolves. In addition to global existence, the sharp pointwise decay of the solution is important in the study of wave equations. In this direction, for linear waves on curved background, we recall the work of Angelopoulos-Aretakis-Gajic \cite{angelopoulos2021late} and Hintz \cite{Hintz} on scalar field, the work of Ma-Zhang \cite{MZ21PLKerr} and Millet \cite{Millet23} on Teukolsky equations in Kerr spacetimes, and the work of Luk-Oh \cite{LuOh} on the precise decay for waves on dynamical backgrounds in odd spatial dimensions.  For nonlinear waves, we recall the work of Luk-Oh \cite{LuOh19} on the Einstein-Maxwell-scalar field system under spherical symmetry, the work of Deng-Pusateri \cite{DengPusateri2020}, Yu \cite{Yu21, Yu24, Yu}, and Luk-Oh-Yu \cite{LuOhYu} on the quasilinear wave equation $-\Box \phi = \phi \Delta \phi$ satisfying the weak null condition, the work of Looi-Xiong \cite{LX25} on the asymptotic decay and expansions for semilinear wave equations, and the work of Looi-Tohaneanu \cite{Looi2022Nullcondition} and Dong-Ma-Ma-Yuan \cite{DMMY} on a quasilinear wave equation with the null condition. We remark that for nonlinear wave equations, there are very few results regarding the generic precise decay of the solution, except the results \cite{LuOh19, DMMY, LuOhYu}.

\subsection{Major difficulties and key ideas} 
 
In our previous paper \cite{DMMY}, we treated a quasilinear wave equation satisfying the classical null condition and derived its precise pointwise decay. Compared with this and other existing results, several new difficulties and phenomena arise in our current paper. 

First, in contrast to the other results on the sharp decay for wave equations, the radiation field of the component $\phi$ is not well-defined at the future null infinity $\mathcal{I}$. The (Friedlander) radiation field of $\phi$ is given by $\Phi = r\phi$, and its value on $\mathcal{I}$ is typically (see, for instance, in \cite{LuOh, DMMY}) used to describe the precise pointwise behavior of the field. Due to the weak decay of the nonlinearity in the wave equation of $\phi$, the field $\Phi=r\phi$ in fact blows up logarithmically in $r$ towards the null infinity\footnote{Such a phenomenon, as demonstrated in \cite{DengPusateri2020,Yu24,Yu}, occurs in the quasilinear wave equation $\Box\phi=\phi\Delta \phi$ as well. It is also related to the fact \cite{YCB73,LindbladRodnianski2010} that the metric $g$ solving the vacuum Einstein equation under a wave gauge diverges logarithmically from the Minkowski cones.}. This imposes one of the major difficulties in our analysis for the field $\phi$. 

Second, the higher-order modes of the solutions $(\phi, \psi)$ do not decay faster than the zeroth-order mode $(\phi_{\ell = 0}, \psi_{\ell=0})$. The works \cite{LuOh19,angelopoulos2021late,Hintz,MZ21PLKerr,DMMY,Millet23,LuOh} on sharp decay for waves rely on the fact that the higher-order modes $\phi_{\ell\geq 1}$ decay strictly faster than the  $0$-th mode, indicating that the leading order term of the $0$-th mode $\phi_{\ell=0}$ is the leading order term of the solution $\phi$ itself, and further reducing the proof of the sharp decay for $\phi$ to a much simpler proof of the sharp decay for $\phi_{\ell=0}$. However, this property fails for \eqref{equ:main}. Indeed, we demonstrate that in a large spacetime region (in particular, in a spacetime region where $u \lesssim v$), the higher modes $\phi_{\ell}$ (resp. $\psi_\ell$) with $\ell\geq 1$ decay at the same rate as $\phi_{\ell=0}$ (resp. $\psi_{\ell=0}$);
This complicated phenomenon makes the analysis delicate.

In establishing the precise pointwise decay estimates for $(\phi,\psi)$, one key ingredient is the application of the nonlinear transformation $\bar{\psi}=\psi+\frac{1}{2}\phi^{2}$; see \eqref{eq:definitionoftildepsi} and \eqref{equ:main-new}. Upon this transformation, the nonlinearity on the right-hand side of the wave equation \eqref{equ:main-new} has faster decay than the nonlinearity on the right-hand side of the wave equation \eqref{equ:main} of $\psi$, which enables us to more easily derive the precise decay estimates.

Another key ingredient is the detailed properties of the term 
\begin{equation*}
X(\Phi,\Psi)=(\partial_t -\partial_r)\Phi - \ln v (\partial_t \Psi)^2,
\end{equation*}
 with $\Psi=r\psi$ being the radiation field of $\psi$, which may also be viewed as a nonlinear transformation. We observe that there is a cancellation, up to terms decaying faster, within this term, which eliminates a $\ln v$ growth in its pointwise bounds. 
 In addition, the equation of $(\partial_t +\partial_r)X(\Phi,\Psi) = \cdots$ (see \eqref{eq:non-trans} for the details) allows us to infer refined estimates for this term, which in turn can be used to infer refined estimates for $\Phi/\ln v$ and hence compute the leading order term in the late-time asymptotics of $\bar{\psi}$. 
 
We also comment that these refined estimates for the term $X(\Phi,\Psi)$ play a vital role in showing that the natural energy for $\phi$ diverges logarithmically as time goes to infinity.

It is worth mentioning that the proof for the genericity part also differs much from the existing literature. When one extracts the leading part of, say $\phi$, it is important to verify that the constant in front, say $\mathfrak{c}_1$ as given in Theorem \ref{thm:main1}, does not vanish so as to guarantee that it is indeed the leading order one. In the present paper, we set the initial data to decay suitably fast such that there is no contribution from the initial data to the coefficients $\{\mathfrak{c}_i\}_{i=1,2,3,4}$. Indeed, the constants $\{\mathfrak{c}_i\}_{i=1,2}$ and the functions on sphere $\{\mathfrak{c}_i\}_{i=3,4}$ are purely determined by the integral of the nonlinear terms on the future null infinity $\mathcal{I}$. Therefore, it is a nonlinear problem to verify that the constants or functions $\{\mathfrak{c}_i\}_{i=1,2,3,4}$ are generically nonzero.

 To establish the genericity argument, we rely on a hidden relation among the constants or functions on spheres $\{\mathfrak{c}_i\}_{i=1,2,3,4}$, reducing the full problem of proving generic nonvanishing property for all the constants or functions $\{\mathfrak{c}_i\}_{i=1,2,3,4}$ to showing only the generic non-vanishing of the constant $\mathfrak{c}_1$. Assuming otherwise that there exists a pair of initial data of $(\phi,\psi)$ such that $\cfrak_1=0$ holds in a deleted $\epsilon_1$-neighborhood of such an initial data. Denote $(\check{\phi},\check{\psi})$  the difference of the perturbed solution to a general initial data in such a deleted $\epsilon_1$-neighborhood from the solution $(\phi,\psi)$.  Exploiting the fact that $\mathfrak{c}_1(\psi)=0$ implies a faster decay in $v$ for the component $\partial \psi$, we can perform an energy estimate for $\check{\phi}$ to deduce a bound for the standard energy of $\check{\phi}$
\begin{equation*}
    \sup_{s\in [1, +\infty)} \mathcal{E}(s, \check{\phi})
    \lesssim
    \mathcal{E}(1, \check{\phi})
    +
    \epsilon \sup_{s\in [1, +\infty)} \mathcal{E}(s, \check{\psi}).
\end{equation*}
Also, we can achieve an energy bound for $\check{\psi}$
 \begin{equation*}
    \sup_{s\in [1, +\infty)} \mathcal{E}(s, \check{\psi})
    \lesssim
    \epsilon \sup_{s\in [1, +\infty)} \mathcal{E}(s, \check{\phi}).
\end{equation*}
Combining the above two estimates, we obtain
\begin{align*}
  \mathcal{E}(1, \check{\psi})
    \lesssim
    \epsilon \mathcal{E}(1, \check{\phi}),
\end{align*}
which manifestly can not hold everywhere in a deleted $\epsilon_1$-neighborhood of the initial data of $(\phi,\psi)$ and hence completes the proof of genericity.

\subsection{Organization}

In Section \ref{sect:PreliminaryandAlmostsharp}, we introduce the notation and prove global existence of the wave system \eqref{equ:main} with almost sharp decay estimates for the solution.
Then, we derive in Section \ref{sec:precise-decay} the precise decay of the solution and 
establish in Section \ref{sec:generic} that the leading order terms in Theorem \ref{thm:main1} are generically non-vanishing, which hence proves Theorem \ref{thm:main1}. 
Next, Theorem \ref{thm:higher} on the sharp decay for the higher-order modes of $\phi$ is proved in Section \ref{sec:appendix}. Finally, in Section \ref{sec:growth}, we verify the growing lower bound of the energy of the field $\phi$ and conclude the proof of Theorem \ref{thm:blow-up}.

 \subsection*{Acknowledgement}

The author Shijie Dong would like to acknowledge the support from the National Natural Science Foundation of China (Grant Nos. 12401280 and 12431007) and Guangdong Basic and Applied Basic Research Foundation (Grant Nos. 2025A1515012652 and 2023A1515110944).  The author Yue Ma is supported by the Fundamental Research Funds for the Central Universities (Xi'an Jiaotong University, Grant Nos. xzy012023034 and xzy022025046).

\section{Preliminaries and almost sharp decay estimates}
\label{sect:PreliminaryandAlmostsharp}

 \subsection{Notation and conventions}\label{SS:nota}
We work in  {a $(1+3)$-}dimensional Minkowski spacetime $\R^{1+3}$. {Denote} one point {in $\R^{1+3}$} by {$(t,x)$, with $t=x_0$ and $x=(x_{1},x_{2},x_{3})$,} and {denote} its spatial radius by $r{=|x|=}\sqrt{x_{1}^{2}+x_{2}^{2}+x_{3}^{2}}$. Hence, the spacetime equals $\R^{1+3}=\left\{(t,r,\omega):t\in \R,\ r\in [0,+\infty)\ \mbox{and}\ \omega\in \mathbb{S}^{2}\right\}$.
Recall that we denote
\begin{equation*} 
-\Box=\partial_{t}^{2}-\Delta\quad \mbox{with}\ \ \Delta=\partial_{r}^{2}+\frac{2}{r}\partial_{r}+\frac{1}{r^{2}}\Deltas.
\end{equation*}
Greek indices $\left\{\mu,\nu,\dots\right\}$ range over $\left\{0,1,2,3\right\}$, Roman indices $\left\{a,b,\dots\right\}$ range over $\left\{1,2,3\right\}$, and the Einstein summation convention for repeated upper and lower indices is applied throughout this paper.
 
\smallskip
Following Klainerman~\cite{Klainerman86}, we introduce the following vector fields:

\begin{enumerate}
	
		\smallskip
	\item [(i)]  Translations: $\partial_{\alpha}=\partial_{x_{\alpha}}$, for $\alpha=0,1,2,3$.
	
	\smallskip
	\item [(ii)] Scaling vector field: $S=t\partial_{t}+x^{a}\partial_{a}$.
	
	\smallskip
	\item [(iii)] Hyperbolic rotations: $L_{a}=x_{a}\partial_{t}+t\partial_{a}$, for $a=1,2,3$.
	
	\smallskip
	\item [(iv)] Spatial rotations: $\Omega_{ab}=x_{a}\partial_{b}-x_{b}\partial_{a}$, for $1\le a<b\le 3$.
\end{enumerate}
To simplify the notation, we denote 
\begin{equation*}
    \Gamma = (S,\partial_{0},\partial_{1},\partial_{2},\partial_{3}, L_1, L_2, L_3, \Omega_{12}, \Omega_{13}, \Omega_{23}).
\end{equation*}
Moreover, we denote 
\begin{equation*}
\partial=\left(\partial_{0},\partial_{1},\partial_{2},\partial_{3}\right),\ 
L=\left(L_{1},L_{2},L_{3}\right)\ \mbox{and}\
\Omega=\left(\Omega_{12},\Omega_{13},\Omega_{23}\right).
\end{equation*}
On the other hand, we use $I$ and $J$ to denote general multi-indices in   $\mathbb{N}^{11}$. 

\smallskip
Recall from \eqref{eq:coorduandv} the null coordinates $(u,v)=(t-r,t+r)$. 
Let $\partial_u$ and $\partial_v$ be the coordinate derivatives in the coordinate system $(u,v, \omega)$ and, thus, we have
\begin{equation*}
    \partial_{u}=\frac{1}{2}(\pt -\pr)\quad \mbox{and}\quad 
    \partial_{v}=\frac{1}{2}(\pt +\pr ).
\end{equation*}
For abbreviation, we also define
\begin{equation}
\lab{def:UandV}
    U=2\partial_{u}=\pt -\pr\quad \mbox{and}\quad 
    V=2\partial_{v}=\pt +\pr . 
\end{equation}
From \cite[Theorem 5.10]{AlinhacBook}, for any sufficiently regular function $f=f(t,x)$, 
\begin{equation}\label{est:t-rpoint}
   u \left|\partial f\right|+u\left|Uf\right|
   +
    v|Vf|\lesssim |\Gamma f|.
\end{equation}
Recall also that, for any sufficiently regular function $f=f(t,x)$, we have 
\begin{equation}\label{est:Omega}
    \left|r^{-1}\Omega f\right|\lesssim v^{-1}\left|\Gamma f\right|\quad \mbox{and}\quad 
    \left|r^{-2}\Deltas f\right|\lesssim v^{-2}\sum_{1\leq |I|\leq 2}\left|\Gamma^I f\right|.
\end{equation}

\smallskip
Define the hyperbolic time $s=\sqrt{t^{2}-r^{2}}$ with $s\ge 1$, and denote constant-$s$ hyperboloidal hypersurfaces by
\begin{equation*}
    \mathcal{H}_{s}{=}\left\{(t,x)\in \mathbb{R}^{1+3}:s^{2}=t^{2}-r^{2}\right\}, \quad \mbox{for any}\ s\ge 1.
\end{equation*}
For each hypersurface $\mathcal{H}_s$, we denote by $\Vec{n}$ the future directed  unit normal vector and $\mathrm{d}\sigma$ {the} volume element (with respect to the Euclidean metric in $\mathbb{R}^{1+3}$),  that is, 
\begin{equation}\label{equ:normal}
\vec{n}\mathrm{d}\sigma =\left(1,-\frac{x_{1}}{t},-\frac{x_{2}}{t},-\frac{x_{3}}{t}\right)\mathrm{d}x.
\end{equation}
Moreover, for any $1\le s_{1}<s_{2}<+\infty$, we denote
\begin{equation*}
    \mathcal{H}_{[s_{1},s_{2}]}{=}\bigcup_{s=s_{1}}^{s_{2}}\mathcal{H}_{s}=\left\{(t,x)\in \mathbb{R}^{1+3}:t^{2}-r^{2}\in \left[s_{1}^{2},s_{2}^{2}\right]\right\}.
\end{equation*}

For a sufficiently regular function $f=f(t,x)$ defined on $\mathcal{H}_{s}$, we define
\begin{equation*}
    \int_{\mathcal{H}_{s}}f(t,x)\d x=\int_{\R^{3}}f(\sqrt{s^{2}+r^{2}},x)\d x.
\end{equation*}
In addition, the Sobolev norms of the function $f=f(t,x)$ on $\mathcal{H}_s$ are defined by 
\begin{equation*}
    \|f\|_{L^{p}(\mathcal{H}_{s})}^{p}=\int_{\mathcal{H}_{s}}|f(t,x)|^{p}\d x \ \  {\mbox{for}\ p\in [1,+\infty)}\quad\mbox{and}\ \ 
    \|f\|_{L^{\infty}(\mathcal{H}_{s})}=\sup_{\mathcal{H}_{s}}|f(t,x)|.
\end{equation*}

 To establish the energy estimate for the 3D inhomogeneous linear wave equation for future reference, we introduce the following standard energy $\mathcal{E}$ on the hyperboloidal hypersurface $\mathcal{H}_{s}$,
 \begin{equation}
 \label{def:standardenergy}
     \mathcal{E}(s,f){:=}\int_{\mathcal{H}_{s}}\left((s/t)^{2}|\partial_{t}f|^{2}+(1/t)^{2}|L f|^{2}\right)\mathrm{d}x.
 \end{equation}
In view of the above definition of $\mathcal{E}$, we directly have 
 \begin{equation}\label{est:L2energy}
     \|(s/t)\partial f\|_{L^{2}(\mathcal{H}_{s})}+\|Vf\|_{L^{2}(\mathcal{H}_{s})}+
      \|(1/t)L f\|_{L^{2}(\mathcal{H}_{s})}\lesssim \mathcal{E}(s,f)^{\frac{1}{2}}.
 \end{equation}
 We also introduce the following conformal energy $\mathcal{E}_{\rm{con}}$ on $\mathcal{H}_{s}$,
 \begin{equation*}
     \mathcal{E}_{\rm{con}}(s,f):=\int_{\mathcal{H}_{s}}
     \left(\left(Kf+2f\right)^{2}
     +\sum_{a=1}^{3}(s{\underline{\partial}}_{a}f)^{2}
     \right)\d x.
 \end{equation*}
 Here, we denote the conformal vector field
 \begin{equation*}
     K=s\bar{\partial}_{s}+2x^{a} {\underline{\partial}}_{a}\quad \mbox{with}\quad 
      \bar{\partial}_{s}=\left(\frac{s}{t}\right)\partial_{t} \ \ \mbox{and} \ \
     {\underline{\partial}}_{a}=\frac{x_{a}}{t}\partial_{t}+\partial_{a}.
 \end{equation*}
 From the classical Hardy inequality, we also have
 \begin{equation}\label{est:L2conenergy}
     \left\|s(s/t)^{2}\partial f\right\|_{L^{2}(\mathcal{H}_{s})}+\|(s/t)f\|_{L^{2}(\mathcal{H}_{s})}\lesssim \mathcal{E}_{\rm{con}}(s,f)^{\frac{1}{2}}.
 \end{equation}
 
Let ${\mathrm{d}\omega}$ be the volume element on unit sphere $\mathbb{S}^{2}$. For any sufficiently regular function $f=f(t,r,\omega)$, we set 
\begin{equation*}
    f_{\ell=0}(t,r):=\frac{1}{4\pi}\int_{\mathbb{S}^{2}}f(t,r,\omega){ \mathrm{d}\omega}.
\end{equation*}
Indeed, the subscript $\ell=0$ indicates that the above definition is the $\ell=0$ spherical harmonic mode of $f$ when the function $f$ is decomposed into spherical harmonics on $\mathbb{S}^{2}$. 
In addition, we set 
\begin{equation*}
    f_{\ge 1}(t,r,\omega):=f(t,r,\omega)-f_{\ell=0}(t,r).
\end{equation*}
More generally, for any $(\ell,m)\in {\mathbb{N}\times\mathbb{Z}}$ with $|m|\le \ell$, we denote by $Y_{\ell }^{m}=Y_{\ell}^{m}(\omega)$ the normalized $(\ell,m)$-order spherical harmonics, that is, 
\begin{equation*}
    -\Deltas Y_{\ell}^{m}(\omega)=\ell(\ell+1)Y_{\ell}^{m}(\omega)\quad \mbox{and}\quad 
    \int_{\mathbb{S}^{2}}Y_{\ell}^{m}(\omega)Y_{\ell'}^{m'}(\omega){ \mathrm{d}\omega}=\delta_{\ell \ell'}\delta_{m m'}.
\end{equation*}
In addition, for any sufficiently regular function $f=f(t,r,\omega)$ and $(\ell,m)\in {\mathbb{N}\times\mathbb{Z}}$ with $|m|\le \ell$, we set 
\begin{equation*}
    {f_{\ell m}}(t,r)=\int_{\mathbb{S}^{2}}f(t,r,\omega)Y_{\ell}^{m}(\omega){ \mathrm{d}\omega}\quad \mbox{and}\quad 
    f_{\ell}(t,r,\omega)=\sum_{m=-\ell}^{\ell}f_{\ell m}(t,r)Y_{\ell}^{m}(\omega).
\end{equation*}

 \subsection{Elementary analytic estimates}
 In this subsection, we recall several elementary estimates on the vector fields, the null form and the 3D {inhomogeneous} linear wave equations. First, we recall the following Sobolev-type inequality from{~\cite[Proposition 1]{KlainWave} and~\cite[Lemma 7.6.1]{HomanderBook}}. 

 \begin{lemma}
     [Klainerman-Sobolev inequality]\label{le:Klain}
     Let $f=f(t,x)$ be a {sufficiently }regular function defined on $\mathcal{H}_{s}$ with $s\in  [1,+\infty)$. Then we have 
     \begin{equation*}
         \sup_{\mathcal{H}_{s}}\big|t^{\frac{3}{2}}f(t,x)\big|
         \lesssim \sum_{|K|\le 2}\left\|L^{K}f\right\|_{L^{2}(\mathcal{H}_{s})}.
     \end{equation*}
 \end{lemma}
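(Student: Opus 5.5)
The inequality in Lemma~\ref{le:Klain} is a Sobolev embedding on $\mathcal{H}_s$, rescaled by the hyperbolic geometry. I would prove it by localizing near a fixed point of $\mathcal{H}_s$, rescaling to a ball of unit size, applying the standard Sobolev embedding $H^2(B_1)\hookrightarrow L^\infty(B_1)$ in $\mathbb{R}^3$, and converting the derivatives back into the vector fields $L_a$.

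\textbf{Case of $r$ comparable to $t$.} Fix a point $(t_0,x_0)\in\mathcal{H}_s$ and parametrize $\mathcal{H}_s$ by $x\mapsto(\sqrt{s^2+|x|^2},x)$. The tangential derivatives along the hyperboloid are $\underline{\partial}_a=\frac{x_a}{t}\partial_t+\partial_a=t^{-1}L_a$; these are exactly the derivatives of $x\mapsto f(\sqrt{s^2+|x|^2},x)$. Set $g(y):=f(\sqrt{s^2+|x_0+t_0y|^2},x_0+t_0y)$ for $|y|\le c$, with $c$ a small absolute constant.

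\textbf{Comparability of $t$ on the ball.} On this ball one has $t\simeq t_0$. Indeed, $|x-x_0|\le c t_0$, and $t=\sqrt{s^2+|x|^2}$ is $1$-Lipschitz in $x$. Therefore $|t-t_0|\le ct_0$.

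\textbf{Converting derivatives.} Compute
\[
\partial_{y_a}g=t_0\,\underline{\partial}_a f=\frac{t_0}{t}L_af .
\]
A second derivative produces $\frac{t_0^2}{t^2}L_aL_bf$ plus terms in which $L_a$ hits the coefficient $t_0/t$. Since $L_a t=x_a$, one has $L_a(t_0/t)=-t_0x_a/t^2$, which is $O(1)$. Hence
\[
|\partial_y^{\le 2}g|\lesssim\sum_{|K|\le2}|L^Kf|
\]
on the ball.

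\textbf{Sobolev and change of variables.} Sobolev embedding on $B_c$ gives $|f(t_0,x_0)|=|g(0)|\lesssim\|\partial_y^{\le2}g\|_{L^2(B_c)}$. The change of variables $\d x=t_0^3\,\d y$ then gives
\[
|f(t_0,x_0)|\lesssim t_0^{-3/2}\sum_{|K|\le2}\|L^Kf\|_{L^2(\mathcal{H}_s)},
\]
which is the claim.

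\textbf{Main obstacle: the tip region $|x_0|\lesssim s$.} Here $t_0\simeq s$, and the ball of radius $ct_0$ in $x$ must stay inside the region where the comparison $t\simeq t_0$ and the derivative bounds hold. This is still fine: $t\le\sqrt{s^2+(|x_0|+ct_0)^2}\lesssim t_0$, and $t\ge s\gtrsim t_0$. So in fact the argument is uniform in all cases, and no separate treatment is needed.

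The only genuinely delicate point is the bookkeeping of the commutator terms $L_a(t_0/t)$ and the Jacobian. Since the Euclidean $\d x$ used in $\int_{\mathcal{H}_s}$ is exactly the parametrization measure, the Jacobian is just the factor $t_0^3$ from the dilation.
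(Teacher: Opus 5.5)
Your proof is correct and is the standard argument for this inequality, the one in Hörmander's Lemma 7.6.1 and reproduced in LeFloch--Ma. You parametrize $\mathcal{H}_s$ by $x$, rescale the ball $|x-x_0|\le ct_0$ to unit size, and use $t\simeq t_0$ there; the $1$-Lipschitz bound already gives this, so the tip region is not a separate case. You then convert $\partial_y$ into $(t_0/t)L_a$ with the harmless term $L_a(t_0/t)=-t_0x_a/t^2$ and undo the Jacobian $t_0^3$. The paper does not prove the lemma but only cites Klainerman and Hörmander for it, so your proposal supplies the argument behind that citation.
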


Second, we introduce the following estimate {on} the null form $Q_{0}$ given as in \eqref{def:waveoperatorandQ0}, which follows directly from the following decomposition of $Q_{0}$: 
\begin{equation*}
 \begin{aligned}
  Q_{0}(f,g)=&-(s/t)^{2}(\pt f)(\pt g)+\sum_{a=1}^{3}(t^{-1}L_{a}f)(t^{-1}L_{a}g)\\
  &-(x^{a}/t)(\pt f)(t^{-1}L_{a}g)
  -(x^{a}/t)(\pt g)(t^{-1}L_{a}f).
  \end{aligned}
 \end{equation*}
 
 \begin{lemma}[Estimate for the null form {$Q_0$}]\label{le:null}
     Let $f=f(t,x)$ and $g=g(t,x)$ be two sufficiently regular functions defined on $\mathcal{H}_{s}$ with $s\in [1,+\infty)$. Then we have 
     \begin{equation*}
         \left|Q_{0}(f,g)\right|\lesssim 
         (s/t)^{2}
         |\partial_{t}f||\partial_{t}g|
         +t^{-1}
         \left(\left|Lf\right|\left|\partial g\right|
         +\left|\partial f\right|\left|L g\right|
         \right).
     \end{equation*}
 \end{lemma}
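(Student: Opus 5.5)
The bound follows from the decomposition of $Q_0$ displayed just before Lemma \ref{le:null}, once I have the elementary comparison $|x^a/t|\le r/t\le 1$, valid on $\mathcal{H}_s$ since $t\geq r$ there. Before estimating, I will check that decomposition with the identity $\partial_a = t^{-1}L_a - (x_a/t)\partial_t$, which is immediate from $L_a = x_a\partial_t + t\partial_a$.

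Substituting this identity, I get
\begin{equation*}
\partial_a f\,\partial_a g = (t^{-1}L_a f)(t^{-1}L_a g) - (x_a/t)\,\partial_t f\,(t^{-1}L_a g) - (x_a/t)\,\partial_t g\,(t^{-1}L_a f) + (x_a x_a/t^2)\,\partial_t f\,\partial_t g .
\end{equation*}
Summing over $a$ and using $Q_0(f,g) = -\partial_t f\,\partial_t g + \sum_a \partial_a f\,\partial_a g$, the $\partial_t f\,\partial_t g$ coefficient becomes $-1 + r^2/t^2 = -(s/t)^2$, because $s^2 = t^2 - r^2$. This recovers the stated decomposition.

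Each term is then bounded separately.
\begin{itemize}
\item The first term is exactly $(s/t)^2|\partial_t f||\partial_t g|$.
\item For the mixed terms, $|x^a/t|\le 1$ gives a bound $t^{-1}\big(|\partial f||Lg| + |\partial g||Lf|\big)$, since $|\partial_t f|\le|\partial f|$.
\item For the product of rotation terms, $|t^{-1}L_a f|\,|t^{-1}L_a g|$, I write $L_a g = x_a\partial_t g + t\partial_a g$. Then $|t^{-1}L_a g|\le |\partial g|$, again using $|x_a|\le t$, so this term is at most $t^{-1}|Lf||\partial g|$.
\end{itemize}
Adding the three bounds gives the claimed inequality. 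I expect no real obstacle; the only point needing care is the bound $|t^{-1}Lg|\lesssim|\partial g|$ used in the last term.
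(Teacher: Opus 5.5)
Your proof is correct and takes the same route as the paper. The paper states that the lemma follows directly from the decomposition of $Q_0$ into $-(s/t)^2\partial_t f\,\partial_t g$, the $(t^{-1}L_a f)(t^{-1}L_a g)$ product and the two mixed terms. You verify that decomposition via $\partial_a = t^{-1}L_a - (x_a/t)\partial_t$, then bound it term by term using $r\le t$ on $\mathcal{H}_s$ and $|t^{-1}L_a g|\lesssim|\partial g|$ for the $LL$ term, which are exactly the details the paper leaves implicit.
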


Next, we recall the energy-type estimates for the 3D linear wave equation. The proof is similar to the case of flat foliation of the spacetime (see \emph{e.g.}~\cite[Chapter 6]{AlinhacBook}), but we give a sketch of it for the sake of completeness and for the reader’s convenience.
 \begin{lemma}\label{le:energy}
 Let $f=f(t,x)$ be the solution to the Cauchy problem
 \begin{equation*}
     -\Box f=F\quad \mbox{with}\quad (f,\pt f)_{|\mathcal{H}_{1}}=(f_{0},f_{1}).
     \end{equation*}
 Then for any $s\geq 1$, we have 
 \begin{equation*}
 \begin{aligned}
     \mathcal{E}(s,f)^{\frac{1}{2}} &\lesssim
     \mathcal{E}(1,f)^{\frac{1}{2}}+\int_{1}^{s}\|F\|_{L^{2}(\mathcal{H}_{\tau})}\mathrm{d} \tau,\\
     \mathcal{E}_{\rm{con}}(s,f)^{\frac{1}{2}} &\lesssim
     \mathcal{E}_{\rm{con}}(1,f)^{\frac{1}{2}}+\int_{1}^{s}\tau\|F\|_{L^{2}(\mathcal{H}_{\tau})}\mathrm{d} \tau.
     \end{aligned}
 \end{equation*}
     \end{lemma}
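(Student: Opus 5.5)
The plan is the standard hyperboloidal energy identity: multiply the equation by $\partial_t f$ (for $\mathcal{E}$) or by the conformal multiplier $Kf+2f$ (for $\mathcal{E}_{\rm con}$), and integrate over the slab $\mathcal{H}_{[1,s]}$. I treat the two estimates in that order.

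\emph{Standard energy.} I start from the pointwise divergence identity
\[
\partial_t f\,(-\Box f)=\tfrac12\partial_t\big(|\partial_t f|^2+|\nabla f|^2\big)-\partial_a(\partial_t f\,\partial_a f).
\]
Integrating over $\mathcal{H}_{[1,s]}$ and applying Stokes' theorem with the Euclidean normal \eqref{equ:normal}, the boundary flux through $\mathcal{H}_s$ is
\[
\int_{\mathcal{H}_s}\Big(\tfrac12\big(|\partial_t f|^2+|\nabla f|^2\big)+\tfrac{x^a}{t}\partial_t f\,\partial_a f\Big)\,\d x .
\]
Completing squares turns this into
\[
\frac12\int_{\mathcal{H}_s}\Big(\sum_a\big|\tfrac{x_a}{t}\partial_t f+\partial_a f\big|^2+(s/t)^2|\partial_t f|^2\Big)\d x .
\]
Since $\tfrac{x_a}{t}\partial_t+\partial_a=t^{-1}L_a$, this quantity is exactly $\tfrac12\mathcal{E}(s,f)$.

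Next I write the spacetime volume element as $\d t\,\d x=(s/t)\,\d s\,\d x$ in the hyperboloidal parametrization. This gives
\[
\frac{d}{ds}\mathcal{E}(s,f)=2\int_{\mathcal{H}_s}(s/t)\,\partial_t f\,F\,\d x .
\]
By Cauchy--Schwarz, the right-hand side is bounded by $2\,\|(s/t)\partial_t f\|_{L^2(\mathcal{H}_s)}\|F\|_{L^2(\mathcal{H}_s)}\le 2\,\mathcal{E}(s,f)^{1/2}\|F\|_{L^2(\mathcal{H}_s)}$. Dividing by $\mathcal{E}^{1/2}$, or working with $\mathcal{E}+\eta$ and letting $\eta\to0$, yields $\frac{d}{ds}\mathcal{E}^{1/2}\le\|F\|_{L^2(\mathcal{H}_s)}$. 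Integrating from $1$ to $s$ gives the first estimate.

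\emph{Conformal energy.} I repeat the argument with the Morawetz multiplier $K_0=(t^2+r^2)\partial_t+2tr\partial_r$, using the identity for $(K_0f+2tf)(-\Box f)$. The divergence structure comes from the fact that $K_0$ is conformal Killing; the zeroth-order correction $2tf$ is chosen so that the bulk term vanishes. On $\mathcal{H}_s$ one has $K_0=s\,K$ in the paper's notation, which can be checked from $t\partial_t+x^a\partial_a$ restricted to the hyperboloid. After completing squares and discarding a total spherical/radial divergence via integration by parts on $\mathcal{H}_s$, the flux equals $\mathcal{E}_{\rm con}(s,f)$ up to constants. The differential inequality then reads
\[
\frac{d}{ds}\mathcal{E}_{\rm con}\lesssim\int_{\mathcal{H}_s}(s/t)\,|Kf+2f|\,s\,|F|\,\d x .
\]
Because $s/t\le1$, this is at most $s\,\mathcal{E}_{\rm con}^{1/2}\|F\|_{L^2(\mathcal{H}_s)}$, and integrating again gives the second bound.

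\emph{Main obstacle.} The delicate step is the algebra showing that the conformal flux through $\mathcal{H}_s$ is positive and equivalent to $\mathcal{E}_{\rm con}$. This requires the integration by parts of the mixed $f\,\partial f$ terms on the hyperboloid, whose boundary terms at spatial infinity vanish for sufficiently decaying $f$. I would first verify this in the hyperbolic coordinates $(s,x)$, where $\Box$ takes the form $-\bar\partial_s^2-\tfrac{3}{s}\bar\partial_s+\dots$. As a fallback, I would cite the computation in \cite[Chapter 6]{AlinhacBook} adapted to the hyperboloidal foliation.
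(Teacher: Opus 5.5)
Your proposal follows the paper's proof: the multiplier $\partial_t f$ with the boundary flux computed via \eqref{equ:normal} and the volume factor $s/t$, then the Morawetz multiplier $K_0f+2tf$, followed by differentiation in $s$ and Cauchy--Schwarz in both cases. One correction in the conformal step: $K_0=tK$, not $sK$ (indeed $tK=(s^2+2r^2)\partial_t+2t\,x^a\partial_a=(t^2+r^2)\partial_t+2tr\partial_r$), so $K_0f+2tf=t(Kf+2f)$ and the bulk integrand is exactly $s\,(Kf+2f)F$. Your displayed inequality carrying an extra factor $s/t$ is stronger than what the identity gives and false in general, but the bound $\frac{d}{ds}\mathcal{E}_{\rm con}\lesssim s\,\mathcal{E}_{\rm con}^{1/2}\|F\|_{L^2(\mathcal{H}_s)}$ you conclude with is correct, and it needs no appeal to $s/t\le 1$.
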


     \begin{proof}
         First, we can rewrite the product $(-\Box f)\pt f$ as the divergence form,
         \begin{equation*}
             (-\Box f)\partial_{t}f=\frac{1}{2}\pt \left((\pt f)^{2}+|\nabla f|^{2}\right)-\partial^{a}\left(\partial_{t}f\partial_{a}f\right).
         \end{equation*}
         Integrating the above identity over $\mathcal{H}_{[1,s]}$, and then using~\eqref{equ:normal}, we have 
         \begin{equation*}
             \mathcal{E}(s,f)-\mathcal{E}(1,f)=-2\int_{1}^{s}\int_{\mathcal{H}_{\tau}}(\tau/t)(\pt f)F\mathrm{d}x\mathrm{d}\tau,
         \end{equation*}
         which, by differentiating in $s$ and applying the Cauchy-Schwarz inequality, leads us to
          \begin{equation*}
             \mathcal{E}(s,f)^{1\over 2} {\mathrm{d}\over \mathrm{d}s}\mathcal{E}(s,f)^{1\over 2}=-\int_{\mathcal{H}_{s}}(s/t)(\pt f)F\mathrm{d}x\Longrightarrow {
             {\mathrm{d}\over \mathrm{d}s}}\mathcal{E}(s,f)^{1\over 2}
             \leq \|F\|_{L^{2}{(\mathcal{H}_{s})}}.
         \end{equation*}  
         Integrating the above estimate over $[1,s]$, we complete the proof for the estimate of $\mathcal{E}$.

         \smallskip
         Second, the proof for the conformal energy estimate relies on the use of a non-spacelike vector field $K_{0}=(t^{2}+r^{2})\partial_{t}+2tr\partial_{r}$. More precisely, a direct computation yields,
         \begin{equation*}
             \begin{aligned}
                ( -\Box f)(K_{0}f+2tf)
                &=
                \frac{1}{2}\partial_{t}\left[
                (t^{2}+r^{2})|\partial f|^{2}+4tr(\pt f)(\pr f)
                \right]\\
                &+\pt \left[
                2tf\pt f-f^{2}+{\rm{div}}(xf^{2})
                \right]
                -{\rm{div}}\left[
                2tf\nabla f+\pt (xf^{2})
                \right]\\
                &+{\rm{div}}\left[
                tx(-2(\pt f)^{2}+|\partial f|^{2})
                -(t^{2}+r^{2})(\pt f)\nabla f-2tr (\pr f)\nabla f
                \right].
             \end{aligned}
         \end{equation*}
         Integrating the above identity over $\mathcal{H}_{[1,s]}$, and then using~\eqref{equ:normal}, we have 
         \begin{equation*}
             \mathcal{E}_{\rm{con}}(s,f)-\mathcal{E}_{\rm{con}}(1,f)=-2\int_{1}^{s}\int_{\mathcal{H}_{\tau}}(\tau/t)(K_{0}f+2tf)F\d x,
         \end{equation*}
          which, by differentiating in $s$ and applying the Cauchy-Schwarz inequality, leads us to
          \begin{equation*}
         {{\mathrm{d}\over \mathrm{d}s}}\mathcal{E}_{\rm{con}}(s,f)^{1\over 2}
             \leq \|sF\|_{L^{2}{(\mathcal{H}_{s})}}\Longrightarrow 
             \mathcal{E}_{\rm{con}}(s,f)^{\frac{1}{2}} \lesssim
     \mathcal{E}_{\rm{con}}(1,f)^{\frac{1}{2}}+\int_{1}^{s}\tau\|F\|_{L^{2}(\mathcal{H}_{\tau})}\mathrm{d} \tau.
         \end{equation*}  
         The proof for the estimate of the conformal energy $\mathcal{E}_{\rm{con}}$ is complete.
     \end{proof}

Last, we introduce the following $L^{\infty}-L^{\infty}$ estimates for 3D linear wave equations. The proof is the same as in~\cite[Proposition 3.1]{AlinhacIndiana}, but we give a sketch for the sake of completeness and for the reader’s convenience. 
 \begin{lemma}\label{le:Linfty}
 The following estimates for the 3D linear wave equations hold.
 \begin{enumerate}
     \item \emph{({Contribution from the initial data}).}
     Let $f=f(t,x)$ be the solution to the Cauchy problem with smooth data
  \begin{equation}\label{equ:waveinitial}
     -\Box f=0\quad \mbox{with}\ \ (f,\pt f)_{|\mathcal{H}_{1}}=(f_{0},f_{1}).
 \end{equation}
Assume that the initial data $(f_{0},f_{1})$ satisfy
\begin{equation}\label{est:smallinitial}
    |f_{0}|+\langle r\rangle\left(|\nabla f_{0}|+|f_{1}|\right)\lesssim {}\langle  r\rangle^{-\frac{7}{2}} \ \ \mbox{and}\ \ 
    \sum_{|I|\le 2}\left\|L^{I}\partial^{\leq 1} f_0\right\|\lesssim 1.
\end{equation}
 Then we have 
 \begin{equation*}
     |f(t,x)|\lesssim (1+u)^{-\frac{3}{2}}(1+v)^{-1}.
 \end{equation*}

\item \emph{({Contribution from the source term}).}
Let $f=f(t,x)$ be the solution to the Cauchy problem
	\begin{equation}\label{equ:wavesource}
	-\Box f=F\quad \mbox{with}\quad (f,\pt f)_{|\mathcal{H}_{1}}=(0,0).
    \end{equation}
	Assume that $F$ is supported in $\mathcal{H}_{[1,+\infty)}$ and satisfies
	\begin{equation*}
	\qquad |F|\le C_{F} (1+u)^{-\mu}(1+v)^{-\nu} \ln^{\gamma} (1+v),\ \  (\mu, \nu, \gamma)\in \mathbb{R}_{+}\times \mathbb{R}_+\times \left\{0,1\right\}.
	\end{equation*}
	Define $F_{\mu}(s)=1,\ln (2+s), (1+s)^{1-\mu}/(1-\mu)$ according to $\mu >1,=1,<1$, respectively, then the following pointwise estimates are true.
	\begin{enumerate}

     \item [\emph{(a)}]  Let $(\mu,\nu,\gamma)\in (1,+\infty)\times \left\{3\right\}\times \left\{1\right\}$. Then we have 
		\begin{equation*}
		|f(t,x)|\lesssim C_{F}(1+u)^{-1}(1+v)^{-1}\ln (2+u).
		\end{equation*}
		
		\item [\emph{(b)}]
        Let $(\mu,\nu,\gamma)\in \R_{+}\times (0,2)\times\left\{0\right\}$.
        Then we have 
		\begin{equation*}
		|f(t,x)|\lesssim C_{F}(1+v)^{-\nu+1}F_{\mu}(u).
		\end{equation*}
		
		\item [\emph{(c)}] 
        Let $(\mu,\nu,\gamma)\in \R_{+}\times \left\{2\right\}\times \left\{0\right\}$.
        Then we have 
		\begin{equation*}
		|f(t,x)|\lesssim C_{F}(1+v)^{-1}\ln (1+v)F_{\mu}(u).
		\end{equation*}
		
		\item [\emph{(d)}]
        Let $(\mu,\nu,\gamma)\in \mathbb{R}_{+}\times (2,+\infty)\times \left\{0\right\}$.
        Then we have 
		\begin{equation*}
		|f(t,x)|\lesssim C_{F}(1+u)^{-(\nu-2)}(1+v)^{-1}F_{\mu}(u).
		\end{equation*}
\end{enumerate}
 
 \end{enumerate}
 \end{lemma}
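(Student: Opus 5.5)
The two parts of Lemma \ref{le:Linfty} are treated separately, following \cite[Proposition 3.1]{AlinhacIndiana}.

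\textbf{Part (i).} Since the data are posed on $\mathcal{H}_1$, we first move them to a flat slice. Solve the linear problem backwards inside the hyperboloid region; equivalently, extend $(f_0,f_1)$ to Cauchy data on $\{t=t_0\}$ for some fixed $t_0$. The weighted smallness \eqref{est:smallinitial} gives data with pointwise decay $\langle r\rangle^{-7/2}$ and $\langle r\rangle^{-9/2}$ for the derivatives. We then use Kirchhoff's formula
\[
f(t,x)=\partial_t\big(t\,\mathrm{avg}_{|y-x|=t}f_0\big)+t\,\mathrm{avg}_{|y-x|=t}f_1 .
\]
The sphere $|y-x|=t$ meets the region $|y|\simeq\rho$ in an area fraction of order $\rho\,d\rho/(rt)$. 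Integrating the weight $\langle\rho\rangle^{-9/2}$ over $\rho\ge|t-r|$ gives
\[
|f|\lesssim \frac{1}{r}\int_{|u|}^{v}\rho\langle\rho\rangle^{-9/2}\,d\rho\lesssim v^{-1}u^{-3/2}
\]
when $r\gtrsim t$. In the interior region $r\lesssim t$ we bound directly by $t^{-1}\cdot t^{-3/2}$. The $L^2$ part of the hypothesis, $\sum_{|I|\le2}\|L^I\partial^{\le1}f_0\|$, is used through Lemma~\ref{le:Klain} to control the region near $r=0$ and to justify the transfer between $\mathcal{H}_1$ and the flat slice.

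\textbf{Part (ii).} We use the positivity of the fundamental solution, so that $|f|\le \tilde f$, where $\tilde f$ solves the equation with the radial majorant $C_F(1+u)^{-\mu}(1+v)^{-\nu}\ln^\gamma(1+v)$. For a radial source, Duhamel's formula in null coordinates reads
\[
\tilde f(t,r)=\frac{C_F}{4r}\int_{u_0}^{u}\int_{|u'|}^{v}\rho\,F(u',v')\,dv'\,du',
\]
where the integration runs over the backward characteristic rectangle $\{\max(0,u')\le u'\le u,\ u\le v'\le v\}$ with $\rho=(v'-u')/2$. The constraint $t^2-r^2\ge1$ gives $u'>0$, so only $0\le u'\le u$ and $u'\le v'\le v$ contribute. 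The $u'$-integral of $(1+u')^{-\mu}$ produces $F_\mu(u)$. The $v'$-integral of $v'\cdot v'^{-\nu}\ln^\gamma v'$ over $[u,v]$ produces $v^{2-\nu}$ in case (b), $\ln v$ in case (c), and $u^{2-\nu}$ in case (d). Dividing by $r$ then gives the stated bounds whenever $r\gtrsim v$.

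In the interior region $r\ll t$ we do not divide by $r$. Instead we use that the $v'$-interval has length $2r$, so the factor $r^{-1}$ is cancelled and one gets the same bounds with $v\simeq u$. In case (a) we keep the extra $\ln$ and use $\mu>1$. Here $\int_u^v v'^{-2}\ln v'\,dv'\lesssim u^{-1}\ln u$, and combined with the $1/r$ (or interior) analysis this gives $u^{-1}v^{-1}\ln u$.

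The main technical care lies in two places: the interior/exterior matching in case (a), where the $\ln(2+u)$ must replace $\ln v$ and needs a split at $v'\sim 2u$; and the change of initial surface in part (i).
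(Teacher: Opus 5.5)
Your part (ii) is essentially the paper's argument. You majorize by positivity of the forward fundamental solution, reduce to a radial source, and integrate $UV(rf)=rF$ over the backward characteristic region, using that the $v'$-interval has length $2r$ when $r<v/4$. Extending $F$ by zero and using the flat Duhamel formula is legitimate here. The data vanish and $\mathcal{H}_{[1,+\infty)}$ is closed under taking causal futures, so the flat solution vanishes below $\mathcal{H}_1$.

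Part (i) has a genuine gap at its first step, the one you yourself flag. Moving the data from $\mathcal{H}_1$ to a flat slice is not an equivalence. $\mathcal{H}_1$ is not a Cauchy hypersurface of $\mathbb{R}^{1+3}$; its domain of dependence is only $\{t>|x|\}$. Solving backwards therefore produces data only on the disc $\{t=t_0,\ |x|<t_0\}$, and that disc does not determine the solution in the future of $\mathcal{H}_1$. For example, $(3t_0,0)$ lies on the forward light cone of the points $\{t=t_0,\ |y|=2t_0\}$, so an arbitrary extension to $|x|>t_0$ changes $f$ there. An admissible extension must reproduce the trace of $f$ on the cone $\{t=|x|\}$, which requires solving an auxiliary problem outside the cone. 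You would then still have to prove that the resulting flat data obey the $\langle r\rangle^{-7/2}$ and $\langle r\rangle^{-9/2}$ bounds. Nothing in \eqref{est:smallinitial} gives this, and invoking Lemma~\ref{le:Klain} does not either. So your Kirchhoff computation is applied to data whose decay was never established.

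The paper never leaves $\mathcal{H}_1$. It writes $f=g_1+g_2+\partial_t g_3$, where each piece has hyperboloidal data:
\begin{itemize}
\item $g_1,g_2$ have data $(0,f_1)$ and $(0,f_2)$. For data of this form the solution is a positive superposition, so one may take the data radial and integrate $UV(rg_i)=0$ from $\mathcal{H}_1$: first in $u$, giving $|V(rg_i)|\lesssim\langle v\rangle^{-5/2}$, then in $v$.
\item $g_3$ has data $(0,f_0)$, and $\partial_t g_3$ is controlled by the energy and conformal energy estimates of Lemma~\ref{le:energy} together with Lemma~\ref{le:Klain}. This is where the $L^2$ hypothesis on $L^I\partial^{\le 1}f_0$ enters. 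It gives $|\partial_t g_3|\lesssim u^{-3/2}v^{-1}$, and, after integrating $|U\partial_t g_3|\lesssim u^{-1/2}v^{-1}$ from $\mathcal{H}_1$, $|\partial_t g_3|\lesssim v^{-1}$ for $u<1$.
\end{itemize}
The splitting is needed because positivity fails for data of the form $(f_0,0)$. So you also cannot fix part (i) by simply majorizing the full data by radial data.
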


 \begin{proof}
 Proof of (i).
 For any $x\in \R$, we denote 
 \begin{equation*}
     f_{2}(x)=\frac{3+2r^{2}}{\sqrt{1+r^{2}}}f_{0}(x)+2(1+r^{2})^{\frac{1}{2}}x\cdot \nabla f_{0}(x).
 \end{equation*}
  By an elementary computation, we decompose $f=g_{1}+g_{2}+\pt g_{3}$ where
  \begin{equation*}
  \left\{
  \begin{aligned}
      -\Box g_{1}&=0\quad \mbox{with}\ \  (g_{1},\pt g_{1})_{|\mathcal{H}_{1}}=(0,f_{1}),\\
       -\Box g_{2}&=0\quad \mbox{with}\ \  (g_{2},\pt g_{2})_{|\mathcal{H}_{1}}=(0,f_{2}),\\
        -\Box g_{3}&=0\quad \mbox{with}\ \  (g_{3},\pt g_{3})_{|\mathcal{H}_{1}}=(0,f_{0}).
      \end{aligned}
      \right.
  \end{equation*}

  \textbf{Step 1.} Estimates on $g_{1}$ and $g_{2}$.
  Without loss of generality, we assume that the initial data $f_{1}$ is a radial function\footnote{Here, we use the fact that the comparison theorem holds for the 3D wave equations in the hyperbolic setting, which is analogous to the usual flat setting; details can be found in \cite[equations (2.3) and (2.5)]{WaZh25} for instance.}. 
  We denote $(X_{1},F_{1})=(rg_{1},rf_{1})$. From~\eqref{equ:waveinitial}, we directly have 
 \begin{equation*}
     UV X_{1}=0\quad \mbox{with}\ \ (X_{1},\partial_{t}X_{1})_{|\mathcal{H}_{1}}=(0,F_{1}).
 \end{equation*}
 Integrating the above equation from $(v^{-1},v)$ which belongs to the initial hypersurface $\mathcal{H}_{1}$, 
and then using the initial data condition~\eqref{est:smallinitial},  we deduce that 
 \begin{equation}\label{est:VX}
     \left|VX_{1}(u,v)\right|= |VX_{1}(v^{-1},v)|\lesssim \langle v\rangle^{-\frac{5}{2}}.
 \end{equation}

For the case of $u\in (0,1)$, we integrate the above estimate from $(u,u^{-1})$ which belongs to the initial hypersurface, and thus, we obtain, using the initial data condition ~\eqref{est:smallinitial},
\begin{equation*}
    \left|(rg_{1})(u,v)\right|\lesssim 
    \left|(rg_{1})(u,u^{-1})\right|
    +\int_{u^{-1}}^{v}\langle \tau \rangle^{-\frac{5}{2}}\d \tau\lesssim 
    \langle u^{-1}\rangle^{-\frac{3}{2}}.
\end{equation*}

For the case of $u\in [1,+\infty)$, we integrate the estimate~\eqref{est:VX} from $(u,u)$ which belongs to the line $r=0$, and thus, we obtain 
\begin{equation*}
    \left|(rg_{1})(u,v)\right|\lesssim 
    \int_{u}^{v}\langle \tau \rangle^{-\frac{5}{2}}\d \tau\lesssim 
    \langle u\rangle^{-\frac{3}{2}}-\langle v\rangle^{-\frac{3}{2}}.
\end{equation*}
Combining the above two estimates, we conclude that 
\begin{equation}\label{est:g1}
    |g_{1}(u,v)|\lesssim |v-u|^{-1}|(rg_{1})(u,v)|\lesssim (1+u)^{-\frac{3}{2}}(1+v)^{-1}.
\end{equation}
Based on a similar argument as in the above and then using the initial data condition~\eqref{est:smallinitial}, we also conclude that 
\begin{equation}\label{est:g2}
    |g_{2}(u,v)|\lesssim |v-u|^{-1}|(rg_{2})(u,v)|\lesssim (1+u)^{-\frac{3}{2}}(1+v)^{-1}.
\end{equation}

\textbf{Step 2.} Estimate on $g_{3}$.
Using~\eqref{est:L2energy},~\eqref{est:L2conenergy},~\eqref{est:smallinitial} and Lemma~\ref{le:energy}, 
\begin{equation*}
    \sum_{|I|\le 2}\left\|(s/t)L^{I}U\partial g_{3}\right\|_{L^{2}(\mathcal{H}_{s})}+\sum_{|I|\le 2}\left\|s(s/t)^{2}L^{I}\partial g_{3}\right\|_{L^{2}(\mathcal{H}_{s})}\lesssim 1.
\end{equation*}
It then follows from Lemma~\ref{le:Klain} that 
\begin{equation}\label{est:Uptg3}
    |U\partial_{t} g_{3}|\lesssim u^{-\frac{1}{2}}v^{-1}\quad \mbox{and}\quad 
    |\partial_{t} g_{3}|\lesssim u^{-\frac{3}{2}}v^{-1}.
\end{equation}

For the case of $u\in (0,1)$, we integrate the first estimate in \eqref{est:Uptg3} from $(v^{-1},v)$ which belongs to the initial hypersurface, and thus, we obtain 
\begin{equation*}
   \left| \partial_{t}g_{3}(u,v)\right|\lesssim |\partial_{t}g_{3}(v^{-1},v)|+v^{-1}\int_{v^{-1}}^{u}\tau^{-\frac{1}{2}}\d \tau \lesssim v^{-1}.
\end{equation*}
Combining the above estimate with the second one in~\eqref{est:Uptg3}, we deduce that 
\begin{equation}\label{est:g3}
    |\partial_{t}g_{3}(u,v)|\lesssim
    v^{-1}\left(\textbf{1}_{\{0<u<1\}}+u^{-\frac{3}{2}}\textbf{1}_{\{u\ge 1\}}\right)
    \lesssim
    (1+u)^{-\frac{3}{2}}(1+v)^{-1}.
\end{equation}

\textbf{Step 3.} Conclusion.
Combining~\eqref{est:g1},~\eqref{est:g2},~\eqref{est:g3}, and in view that $f=g_{1}+g_{2}+\partial_{t}g_{3}$, we complete the proof of the estimate for $f$.

\smallskip
Proof of (ii). Similarly to the above, we may rely on the comparison theorem and assume without loss of generality that the source term $F$ is a radial function. We denote $(X,G)=(rf,rF)$. From~\eqref{equ:wavesource}, we directly have 
\begin{equation*}
    UVX=G\quad \mbox{with}\ \ (X,\partial_{t}X)_{|\mathcal{H}_{1}}=(0,0).
\end{equation*}
 Let $(\mu,\nu,\gamma)\in (1,+\infty)\times \left\{3\right\}\times \left\{1\right\}$. Integrating the above equation from $(v^{-1},v)$ which belongs to the initial hypersurface $\mathcal{H}_{1}$, we deduce that 
 \begin{equation}\label{est:VX1}
     \left|VX(u,v)\right|\lesssim \int_{v^{-1}}^{u}
     \frac{(v-\tau)\ln(1+v)}{(1+\tau)^{\mu}(1+v)^{3}}\d \tau \lesssim (1+v)^{-2}\ln (1+v).
 \end{equation}

 For the case of $u\in (0,1)$,
we integrate the above estimate from $(u,u^{-1})$ which belongs to the initial hypersurface, and thus, we obtain 
\begin{equation*}
    \left|(rf)(u,v)\right|\lesssim 
    \int_{u^{-1}}^{v}(1+\tau)^{-2}\ln (1+\tau)\d \tau \lesssim (1+u^{-1})^{-1}\ln(1+u^{-1}).
\end{equation*}
For the case of $u\in [1,+\infty)$, we integrate the estimate~\eqref{est:VX1} from $(u,u)$ which belongs to the line $r=0$, and thus, we obtain 
\begin{equation*}
\begin{aligned}
    \left|(rf)(u,v)\right|
    &\lesssim 
    \int_{u}^{v}(1+\tau)^{-2}\ln (1+\tau)\d \tau \\
    &\lesssim 
    (1+v)^{-1}
    \left(r(1+u)^{-1}\ln (2+u)+(\ln(1+u)-\ln(1+v))\right),
    \end{aligned}
\end{equation*}
which completes the proof of the estimate for $f$ in (a).

The proofs of the estimates in (b)--(d) are based on a similar argument (see~\cite[Proposition 3.1]{AlinhacIndiana} for more detail), and we omit it.
\end{proof}

 \subsection {Global existence and almost sharp decay}
   
   In this subsection, we show the global existence and almost sharp decay for the small data solutions to~\eqref{equ:main}. Our statement on the global existence and almost sharp decay estimates is contained in the following Proposition.
\begin{proposition}\label{prop:existence}
 Let $N\in \mathbb{N}^{+}$ with $N\ge 6$.
 Let $(I_{1},I_{2})\in \mathbb{N}^{11}$ with $|I_{1}|\le N-3$ and $|I_{2}|\le N-4$.
 There exists an $\epsilon_{0}>0$ such that for all $\epsilon\in (0,\epsilon_{0})$ and all initial data $(\phi_{0},\phi_1,\psi_{0},\psi_1)$ on $\mathcal{H}_{1}$ satisfying the smallness condition~\eqref{est:smallness1}, 
 the Cauchy problem~\eqref{equ:main} admits a global-in-time solution $(\phi,\psi)$ which enjoys the following almost sharp decay estimates 
 \begin{equation*}
 \begin{aligned}
\left|\Gamma^{I_{1}}\phi\right|&\lesssim_{N} \epsilon (1+v)^{-1}\ln(1+v), \ \
\left|\partial\Gamma^{I_{2}}\phi\right|\lesssim_{N} \epsilon (1+v)^{-1}(2+u)^{-1}\ln (1+v),\\
\left|V\Gamma^{I_{2}}\phi\right|&\lesssim_{N}\epsilon (1+v)^{-2}\ln (1+v),\ \
\left|U\Gamma^{I_{2}}\phi\right|\lesssim_{N} \epsilon (1+v)^{-1}(2+u)^{-1}\ln (1+v),\\
\left|\Gamma^{I_{1}}\psi\right|&\lesssim_{N} \epsilon  (1+v)^{-1}(2+u)^{-1}\left(\ln (2+u) + ((2+u)/(1+v))\ln^{2} (1+v)\right),\\
\left|\partial\Gamma^{I_{2}}\psi\right|&\lesssim_{N} \epsilon  (1+v)^{-1}(2+u)^{-2}\left(\ln (2+u) + ((2+u)/(1+v))\ln^{2} (1+v)\right),\\
\left|V\Gamma^{I_{2}}\psi\right|&\lesssim_{N} \epsilon  (1+v)^{-2}(2+u)^{-1}\left(\ln (2+u) + ((2+u)/(1+v))\ln^{2} (1+v)\right).
     \end{aligned}
 \end{equation*}
\end{proposition}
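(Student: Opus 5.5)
We will use a bootstrap argument on the hyperboloidal foliation $\mathcal{H}_s$, combining energy bounds at high order with pointwise bounds obtained from the $L^\infty$--$L^\infty$ estimates of Lemma~\ref{le:Linfty}. First, commuting $\Gamma^I$ with the system~\eqref{equ:main}, we use that $\Gamma$ commutes with $\Box$ (up to $[\Box,S]=2\Box$). Since $\Gamma$ preserves the null structure of $Q_0$ and the structure of $(\partial_t\psi)^2$ up to lower order terms, we obtain
\begin{equation*}
-\Box \Gamma^I\phi=\sum_{I_1+I_2\le I}\partial\Gamma^{I_1}\psi\,\partial\Gamma^{I_2}\psi,\qquad -\Box\Gamma^I\psi=\sum_{I_1+I_2\le I}Q(\Gamma^{I_1}\phi,\Gamma^{I_2}\phi),
\end{equation*}
with each $Q$ a null form. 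The bootstrap assumptions on $[1,s^*)$ will be $\mathcal{E}(s,\Gamma^I\phi)^{1/2}\le C\epsilon s^{\delta'}$ and $\mathcal{E}_{\rm con}(s,\Gamma^I\psi)^{1/2}\le C\epsilon s^{\delta'}$ for $|I|\le N$ and a small $\delta'\ll1$, together with the pointwise bounds of the proposition with constant $C\epsilon$ in place of $\epsilon$. The slow growth accounts for the loss from the weak null coupling.

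To close the energies we apply Lemma~\ref{le:energy}. For $\phi$, the source $\partial\Gamma^{I_1}\psi\,\partial\Gamma^{I_2}\psi$ is estimated by putting the lower-order factor in $L^\infty$, where the bootstrap gives $|\partial\psi|\lesssim \epsilon v^{-1}u^{-2}\ln$, i.e. $\lesssim\epsilon s^{-2+}$ after using $u v\sim s^2$. The higher-order factor goes in $L^2$, using $\|s(s/t)^2\partial\Gamma\psi\|\lesssim \mathcal{E}_{\rm con}^{1/2}$. This yields $\|F\|_{L^2(\mathcal{H}_s)}\lesssim \epsilon^2 s^{-1+}$, hence only logarithmic or $s^{\delta'}$ growth of $\mathcal{E}(s,\Gamma^I\phi)$. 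For $\psi$, Lemma~\ref{le:null} converts $Q_0$ into good derivatives $(s/t)^2\partial_t$ and $t^{-1}L$. Bounding the lower-order $\phi$ factor by $\epsilon v^{-1}u^{-1}\ln v$ (respectively $|L\phi|\lesssim\epsilon\ln v$), the weight $s\|F\|$ in the conformal energy estimate is integrable up to $s^{-1+}$, which closes. The initial energies are $\lesssim\epsilon$ by~\eqref{est:smallness1}.

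The pointwise bounds are recovered in two stages. First, the Klainerman--Sobolev inequality (Lemma~\ref{le:Klain}) gives weak bounds $|\Gamma^{I}\partial\phi|\lesssim \epsilon t^{-3/2}s^{\delta'}$ for $|I|\le N-2$, and similarly for $\psi$. These are then fed into Lemma~\ref{le:Linfty} to sharpen the decay. For $\phi$, the source $|\Gamma^{I}(\partial\psi)^2|\lesssim\epsilon^2 v^{-2}u^{-4}\ln^2$ falls under case (c) (or (b) after sacrificing the logarithm) and gives $|\Gamma^{I_1}\phi|\lesssim\epsilon v^{-1}\ln v$; the data part is handled by Lemma~\ref{le:Linfty}(i). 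For $\psi$, the null structure gives $|Q_0|\lesssim v^{-1}|\Gamma\phi||\partial\phi|+(u/v)|\partial\phi|^2\lesssim\epsilon^2 v^{-3}u^{-1}\ln^2v$. Case (a), together with the extra term coming from $\ln^2$, gives the stated bound $v^{-1}u^{-1}(\ln u+(u/v)\ln^2v)$. The derivative bounds then follow from~\eqref{est:t-rpoint}: $u|\partial f|+v|Vf|\lesssim|\Gamma f|$ applied to $\Gamma^{I_2}f$, which costs one order of $\Gamma$. This explains the index ranges $N-3$ and $N-4$.

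The main obstacle is closing the pointwise bound for $\phi$ without a loss beyond $\ln v$. Here the product $(\partial_t\psi)^2$ only decays like $v^{-2}$, which sits exactly at the borderline case (c) producing the logarithm. The $\psi$ bound, in turn, depends on $\phi$'s bound, so the logarithms must not compound. To handle this, we would track the sharp $u$-weights carefully, using $U$ versus $V$ derivatives via~\eqref{est:t-rpoint}. We would also use the integration-along-characteristics form of Lemma~\ref{le:Linfty}(ii), namely $UV(r\,\cdot)=r\,\cdot$ source, directly for the borderline terms, so that $\ln(1+v)$ appears exactly once. With improved constants $C\epsilon\to C\epsilon/2$ for $\epsilon$ small, the bootstrap closes, and local existence extends the solution globally.
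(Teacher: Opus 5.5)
\textbf{The gap is in the pointwise bound for $\psi$.} You write $|Q_0(\phi,\phi)|\lesssim\epsilon^2 v^{-3}u^{-1}\ln^2 v$ and then invoke Lemma~\ref{le:Linfty}(ii)(a). That case requires $\mu>1$ and a single logarithm $\gamma=1$. Your source has $\mu=1$ and $\ln^2 v$.

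The $u^{-1}$ is genuine, not an artifact of crude bounds. Write $Q_0(\phi,\phi)=-U\phi\,V\phi+(\text{angular})^2$ and take the leading profile $\phi\approx r^{-1}\ln(v/u)$. Then $U\phi\approx -2(ru)^{-1}$ and $V\phi\approx r^{-2}\ln(v/u)$, so the source really behaves like $v^{-3}u^{-1}\ln(v/u)$.

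With $\mu=1$, the characteristic-integration proof produces an extra factor $\int_{v^{-1}}^{u}(1+\tau)^{-1}\,d\tau\sim\ln(2+u)$. With your $\ln^2 v$ you lose one more logarithm. The result is at best of the type $\epsilon v^{-1}u^{-1}\ln^{3}(2+u)$; the paper's own direct step only yields $\epsilon v^{-1}u^{-1+\delta_1}$. Neither is dominated by $\epsilon v^{-1}u^{-1}\big(\ln u+(u/v)\ln^2 v\big)$ when $v\gg u$. So your pointwise bootstrap for $\psi$ cannot be recovered with constant $C\epsilon/2$, and the stated $\psi$-bounds do not follow from your argument.

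\textbf{The missing idea is the nonlinear substitution $\bar\psi=\psi+\frac12\phi^2$.} Since $\Box(\phi^2)=2\phi\Box\phi+2Q_0(\phi,\phi)$, it satisfies $-\Box\bar\psi=\phi(\partial_t\psi)^2$, which has a cubic source.

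The paper proceeds as follows.
\begin{enumerate}
\item It closes a purely energy bootstrap, $\mathcal{E}(s,\Gamma^I\phi)^{1/2}\le C\epsilon\ln(1+s)$ and $\mathcal{E}(s,\Gamma^I\psi)^{1/2}\le C\epsilon$, using Klainerman--Sobolev. There is no pointwise or conformal-energy bootstrap.
\item It then iterates Lemma~\ref{le:Linfty}:
\begin{itemize}
\item a rough bound $|\Gamma\phi|\lesssim\epsilon v^{-1}\ln^2 v$;
\item a rough bound $|\Gamma\psi|\lesssim\epsilon v^{-1}u^{-1+\delta_1}$, where your lossy direct use of $Q_0$ is harmless;
\item the sharp bound $|\Gamma\phi|\lesssim\epsilon v^{-1}\ln v$;
\item finally $|\Gamma(\phi(\partial_t\psi)^2)|\lesssim\epsilon^3u^{-5/2+\delta_1}v^{-3}\ln v$. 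This is exactly case (a) and gives $|\Gamma\bar\psi|\lesssim\epsilon v^{-1}u^{-1}\ln u$.
\end{itemize}
\item Writing $\psi=\bar\psi-\frac12\phi^2$ with $\phi^2\lesssim\epsilon^2v^{-2}\ln^2v$ produces precisely the term $((2+u)/(1+v))\ln^2(1+v)$. The shape of the stated bound is the fingerprint of this decomposition.
\end{enumerate}

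You have also placed the main obstacle in the wrong spot. The borderline $\nu=2$ for $\phi$ is handled directly by case (c), once you note that $(u/v)\ln^2 v\lesssim\ln^2 u$.

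Two smaller points:
\begin{itemize}
\item Klainerman--Sobolev with $\mathcal{E}$ gives $|\partial\Gamma^I f|\lesssim t^{-1/2}s^{-1}\mathcal{E}^{1/2}$, not $t^{-3/2}$.
\item In the $\phi$-energy estimate, pairing $\|s(s/t)^2\partial\Gamma\psi\|_{L^2}$ with $\|s^{-1}(t/s)^2\partial\Gamma\psi\|_{L^\infty}$ fails far out on $\mathcal{H}_s$. The weight behaves like $s^{-1}u^{-1}$ or worse as $u\to0$. You need the standard energy of $\psi$ together with $\|(t/s)\partial\Gamma\psi\|_{L^\infty(\mathcal{H}_s)}\lesssim C\epsilon s^{-1}$, as the paper does.
\end{itemize}
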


\begin{proof}
 The proof of Proposition~\ref{prop:existence} relies on a bootstrap argument for a high-order energy of the solution $(\phi,\psi)$. From the definition of $\mathcal{E}(s,f)$, the smallness condition~\eqref{est:smallness1} implies the smallness condition for a high-order energy over the initial hypersurface $\mathcal{H}_{1}$:
\begin{equation}\label{est:smallness}
     \sum_{|I|\le N}\mathcal{E}(1,\Gamma^{I}\phi)^{\frac{1}{2}}+\sum_{|I|\le N}\mathcal{E}(1,\Gamma^{I}\psi)^{\frac{1}{2}}
     \lesssim \epsilon.
\end{equation}
In addition, we consider the following bootstrap assumption for the high-order energy over $\mathcal{H}_{s}$: for $C\gg 1$ and $0<\epsilon\ll C^{-1}$ to be chosen later, 
 \begin{equation}\label{est:Boot}
     (\ln(1+s))^{-1}\sum_{|I|\le N}\mathcal{E}(s,\Gamma^{I}\phi)^{\frac{1}{2}}+\sum_{|I|\le N}\mathcal{E}(s,\Gamma^{I}\psi)^{\frac{1}{2}}
     \leq C\epsilon.
 \end{equation}
 For all initial data $(\phi_{0},\phi_1,\psi_{0},\psi_1)$ satisfying~\eqref{est:smallness1}, we set 
\begin{equation*}
S_{*}=S_{*}(\phi_{0},\phi_1,\psi_{0},\psi_1)=\sup \left\{s\in [1,+\infty):(\phi,\psi) \ \mbox{satisfies}~\eqref{est:Boot}\ \mbox{on}\ [1,S_{*})\right\}>1.
\end{equation*}

We are in a position to complete the proof of 
Proposition~\ref{prop:existence}. Note that, in the following discussions, the implicit constants in $\lesssim$ can depend on the constant $N\in \mathbb{N}^{+}$.

    First of all, from~\eqref{est:L2energy}, Lemma~\ref{le:Klain}, the bootstrap assumption \eqref{est:Boot}, and the definition of the energy $\mathcal{E}(s,f)$,  we deduce, for all $s\in [1,S_{*})$, 
\begin{equation}\label{est:Bootpoint}
    \begin{aligned}
       \sum_{|I|\le N-2}
       \left(\left|\partial \Gamma^{I}\psi\right|+s^{-1}\left|L\Gamma^{I}\psi\right|\right)
      & \lesssim C\epsilon t^{-\frac{1}{2}}s^{-1},\\
      (\ln(1+s))^{-1}
      \sum_{|I|\le N-2}
       \left(\left|\partial \Gamma^{I}\phi\right|+s^{-1}\left|L\Gamma^{I}\phi\right|\right)
      & \lesssim C\epsilon t^{-\frac{1}{2}}s^{-1}.
       \end{aligned}
    \end{equation}
    On the other hand, from~\eqref{equ:main}, for any $I\in \mathbb{N}^{11}$ with $|I|\le N$, there exist constants $\left\{a_{J}^{I}\right\}_{J=0}^{I}$  (independent of $(\phi,\psi)$) such that
\begin{equation*}
-\Box{\Gamma}^{I}\phi=a_{J}^{I}{\Gamma}^{J}\left(\partial_{t}\psi\right)^{2} \ \ \mbox{and} \ \
-\Box {\Gamma}^{I}\psi={ a}_{J}^{I}{\Gamma}^{J}Q_{0}(\phi,\phi).
\end{equation*}
Here, we have used the following commutator relations: 
\begin{equation*}
[-\Box, \partial]=[-\Box, L]=[-\Box, \Omega]=[-\Box,S]+2\Box=0.
\end{equation*}
The above identities will be frequently used in this proof. 

\smallskip
{\bf Step 1.} Global existence. We start with the proof of $S_{*}=+\infty$. 
For any initial data $(\phi_{0},\phi_1,{\psi}_{0},\psi_1)$ satisfying the smallness condition~\eqref{est:smallness1}, we consider the corresponding solution $(\phi,\psi)$ of~\eqref{equ:main}. In what follows, we prove the global existence of $(\phi,\psi)$ by improving the estimates of $(\phi,\psi)$ in the bootstrap assumption~\eqref{est:Boot}.

First, from the smallness condition~\eqref{est:smallness1} and Lemma~\ref{le:Klain}, we directly have 
\begin{equation}\label{est:initsmall}
\begin{aligned}
    \sum_{|I|\le N-3}\left(|\Gamma \Gamma^{I}\phi|+|\Gamma\Gamma^{I}\psi|\right)_{|\mathcal{H}_{1}}\lesssim \epsilon\langle r\rangle^{-\frac{5}{2}},\\
    \sum_{|I|\le N-3}
    \left(|\partial\Gamma \Gamma^{I}\phi|+|\partial\Gamma\Gamma^{I}\psi|\right)_{|\mathcal{H}_{1}}
    \lesssim \epsilon\langle r\rangle^{-\frac{7}{2}}.
    \end{aligned}
\end{equation}
For the case of $u\in (0,1)$, from~\eqref{est:Bootpoint} and the definition of $s$, 
\begin{equation*}
   \sum_{|I|\le N-2}
       \left|U\Gamma^{I}\psi\right|\lesssim  \sum_{|I|\le N-2}
       \left|\partial \Gamma^{I}\psi\right|\lesssim C\epsilon  v^{-1}u^{-\frac{1}{2}}.
\end{equation*}
Integrating the above estimate from $(v^{-1},v,\omega)$ which belongs to the initial hypersurface $\mathcal{H}_{1}$, and then using~\eqref{est:initsmall}, we obtain
\begin{equation}\label{est:localu}
    \sum_{|I|\le N-3}\left|\partial \Gamma^{I}\psi\right|\lesssim \epsilon\langle v-v^{-1}\rangle^{-\frac{5}{2}}+
    C\epsilon v^{-1}\int_{v^{-1}}^{u}\tau^{-\frac{1}{2}}\d \tau \lesssim 
    C\epsilon v^{-1}u^{\frac{1}{2}},
\end{equation}
which directly implies 
\begin{equation*}
    \sum_{|I|\le N-3}\left|t\partial \Gamma^{I}\psi\right|\lesssim C\epsilon tv^{-1}u^{\frac{1}{2}}\lesssim  C\epsilon \Longrightarrow 
    \sum_{|I|\le N-3}\|(t/s)\partial\Gamma^{I}\psi\|_{L^{\infty}(\mathcal{H}_{s})}\lesssim C\epsilon s^{-1}.
\end{equation*}
For the case of $u\in (1,+\infty)$, using again~\eqref{est:Bootpoint} and the definition of $s$, we infer
\begin{equation*}
    \sum_{|I|\le N-3}\left|t\partial \Gamma^{I}\psi\right|\lesssim C\epsilon t^{\frac{1}{2}}s^{-1}\lesssim  C\epsilon \Longrightarrow 
    \sum_{|I|\le N-3}\|(t/s)\partial\Gamma^{I}\psi\|_{L^{\infty}(\mathcal{H}_{s})}\lesssim C\epsilon s^{-1}.
\end{equation*}
Combining the above estimates with~\eqref{est:L2energy},~\eqref{est:Boot} and $N\ge 6$, we have 
\begin{equation*}
\begin{aligned}
    \sum_{|I|\le N}\|\Gamma^{I}(\pt \psi)^{2}\|_{L^{2}(\mathcal{H}_{s})}
    &\lesssim \sum_{\substack{|I_{1}|\le N-3\\ |I_{2}|\le N}}
    \|(t/s)\partial\Gamma^{I_{1}}\psi\|_{L^{\infty}(\mathcal{H}_{s})}\|(s/t)\partial\Gamma^{I_{2}}\psi\|_{L^{2}(\mathcal{H}_{s})}\\
    &\lesssim \sum_{\substack{|I_{1}|\le N-2\\ |I_{2}|\le N}}
    \|(t/s)\partial\Gamma^{I_{1}}\psi\|_{L^{\infty}(\mathcal{H}_{s})}\mathcal{E}(s,\Gamma^{I_{2}}\psi)\lesssim C^{2}\epsilon^{2}s^{-1}.
    \end{aligned}
\end{equation*}
Integrating the above estimate over $[1,s]$ and then using Lemma~\ref{le:energy}, we obtain
\begin{equation*}
    \sum_{|I|\le N}\mathcal{E}(s,\Gamma^{I}\phi)^{\frac{1}{2}}
    \lesssim  \epsilon
    +\sum_{|I|\le N}\int_{1}^{s}\|\Gamma^{I}(\pt \psi)^{2}\|_{L^{2}(\mathcal{H}_{\tau})}\d \tau
    \lesssim
    \epsilon+C^{2}\epsilon^{2}\ln (1+s).
\end{equation*}
This estimate strictly improves the energy estimates of $\phi$ in the bootstrap assumption~\eqref{est:Boot} for $C$ large enough and $\epsilon$ small enough.

\smallskip
Second, using again $N\ge 6$, we have, for any $s\in [1,S_{*})$,  
\begin{equation*}
     \sum_{|I|\le N}\|\Gamma^{I}Q_{0}(\phi,\phi)\|_{L^{2}(\mathcal{H}_{s})}
    \lesssim \sum_{\substack{|I_{1}|\le N-3\\ |I_{2}|\le N}}
    \|Q_{0}(\Gamma^{I_{1}}\phi,\Gamma^{I_{2}}\phi)\|_{L^{2}(\mathcal{H}_{s})}.
\end{equation*}
Based on the above estimate and Lemma~\ref{le:null}, we obtain 
\begin{equation*}
\begin{aligned}
     \sum_{|I|\le N}\|\Gamma^{I}Q_{0}(\phi,\phi)\|_{L^{2}(\mathcal{H}_{s})}
    &\lesssim \sum_{\substack{|I_{1}|\le N-3\\ |I_{2}|\le N}}
    \|\partial \Gamma^{I_{1}}\phi\|_{L^{\infty}(\mathcal{H}_{s})}
    \|t^{-1}L\Gamma^{I_{2}}\phi\|_{L^{2}(\mathcal{H}_{s})}\\
    &+\sum_{\substack{|I_{1}|\le N-3\\ |I_{2}|\le N}}
    \|s^{-1}L\Gamma^{I_{1}}\phi\|_{L^{\infty}(\mathcal{H}_{s})}
    \|(s/t)\partial \Gamma^{I_{2}}\phi\|_{L^{2}(\mathcal{H}_{s})}\\
    &+\sum_{\substack{|I_{1}|\le N-3\\ |I_{2}|\le N}}
    \|(s/t)\partial_{t}\Gamma^{I_{1}}\phi\|_{L^{\infty}(\mathcal{H}_{s})}\|(s/t)\partial_{t}\Gamma^{I_{2}}\phi\|_{L^{2}(\mathcal{H}_{s})},
\end{aligned}
\end{equation*}
which, combined with the estimates ~\eqref{est:L2energy},~\eqref{est:Boot} and~\eqref{est:Bootpoint}, implies
\begin{equation*}
    \sum_{|I|\le N}\|\Gamma^{I}Q_{0}(\phi,\phi)\|_{L^{2}(\mathcal{H}_{s})}\lesssim C^{2}\epsilon^{2}s^{-\frac{3}{2}}
    \ln^{2} (1+s)\lesssim C^{2}\epsilon^{2}s^{-\frac{5}{4}}.
\end{equation*}
Integrating the above estimate over $[1,s]$, and then using Lemma~\ref{le:energy}, we deduce
\begin{equation*}
    \sum_{|I|\le N}\mathcal{E}(s,\Gamma^{I}\psi)^{\frac{1}{2}}
    \lesssim  \epsilon
    +\sum_{|I|\le N}\int_{s_{0}}^{s}\|\Gamma^{I}Q_{0}(\phi,\phi)\|_{L^{2}(\mathcal{H}_{\tau})}\d \tau
    \lesssim
    \epsilon+C^{2}\epsilon^{2}.
\end{equation*}
This estimate strictly improves the energy estimates of $\psi$ in the bootstrap assumption~\eqref{est:Boot} for  $C$ large enough and $\epsilon$ small enough.

\smallskip
We have so far strictly improved the estimates of $(\phi,\psi)$ in the bootstrap assumption~\eqref{est:Boot} and, thus, for all initial data $(\phi_{0},\phi_1, {\psi}_{0},\psi_1)$ satisfying~\eqref{est:smallness1}, {$S_{*}(\phi_{0},\psi_{0})=+\infty$ and} the proof of global existence is complete.

\smallskip
\textbf{Step 2.} Almost sharp decay estimates.
In what follows, we tacitly write the decay rates of the solution pair in terms of $(u,v)$,
while by the local results (see, for example, the proof of~\eqref{est:localu}), the same decay estimates remain valid if one replaces $(u,v)$ by $(2+u,1+v)$.

\smallskip
From~\eqref{est:Bootpoint}, for any $J\in \mathbb{N}^{11}$ with $|J|\le N-2$, we directly have 
\begin{equation*}
    \left|\Gamma^{J}(\pt \psi )^{2}\right|
    \lesssim \sum_{\substack{|J_{1}|\le N-2\\ |J_{2}|\le N-2}}
    \left|\partial \Gamma^{J_{1}}\psi\right|
    \left|\partial \Gamma^{J_{2}}\psi\right|\lesssim \epsilon t^{-1}s^{-2}\lesssim \epsilon v^{-2}u^{-1}.
\end{equation*}
Therefore, from Lemma~\ref{le:Linfty},  we obtain,   for any $J\in \mathbb{N}^{+}$ with $|J|\le N-2$,
\begin{equation*}
    \left| \Gamma^{J}\phi\right|\lesssim 
    \epsilon v^{-1}u^{-1}+\epsilon v^{-1}(\ln v) (\ln u)\lesssim \epsilon v^{-1}\ln^{2}v.
\end{equation*}
Here, we have used (i) of Lemma~\ref{le:Linfty} and the smallness condition~\eqref{est:smallness1} for initial data.
Combining the above estimate with~\eqref{est:t-rpoint}, we obtain a rough decay estimate for $\phi$ with an additional $\ln v$ growth.

\smallskip
Next, from the above decay estimates for $\phi$ and Lemma~\ref{le:null}, for any $J\in \mathbb{N}^{11}$ with $|J|\le N-3$, we directly have
\begin{equation*}
\begin{aligned}
     |\Gamma^{J}Q_{0}(\phi,\phi)|
    &\lesssim t^{-1}\sum_{\substack{|J_{1}|\le N-3\\ |J_{2}|\le N-3}}
    |\partial \Gamma^{J_{1}}\phi|
    |L\Gamma^{J_{2}}\phi|\\
    &+t^{-1}\sum_{\substack{|J_{3}|\le N-3\\ |J_{4}|\le N-3}}
    |L\Gamma^{J_{3}}\phi|
    |\partial\Gamma^{J_{4}}\phi|\\
    &+(s/t)^{2}\sum_{\substack{|J_{5}|\le N-3\\ |J_{6}|\le N-3}}
    |\partial\Gamma^{J_{5}}\phi|
    |\partial\Gamma^{J_{6}}\phi|\lesssim \epsilon v^{-3}u^{-1}\ln^{4}v.
\end{aligned}
\end{equation*}
Therefore, from Lemma~\ref{le:Linfty},  we infer, for any $J\in \mathbb{N}^{11}$ with $|J|\le N-3$,
\begin{equation*}
    \left| \Gamma^{J}\psi\right|\lesssim \epsilon v^{-1}u^{-1}+\epsilon v^{-1}u^{-1+\delta_{1}}\lesssim \epsilon v^{-1}u^{-1+\delta_{1}},\quad \mbox{for any}\ \ 0<\delta_{1}\ll 1.
\end{equation*}
Here, we have used again (i) of Lemma~\ref{le:Linfty} and the smallness condition~\eqref{est:smallness1} for initial data.
Combining the above estimate with~\eqref{est:t-rpoint}, we obtain a rough decay estimate for $\psi$ with an additional $u^{\delta_{1}}$ growth.

\smallskip
Then, from the above estimate for $\psi$ and~\eqref{est:t-rpoint}, for any $J\in \mathbb{N}^{+}$ with $|J|\le N-3$, 
\begin{equation*}
    \left| \Gamma^{J}(\pt \psi )^{2}\right|
    \lesssim \sum_{\substack{|J_{1}|\le N-3\\ |J_{2}|\le N-4}}
    \left|\partial \Gamma^{J_{1}}\psi\right|
    \left|\partial \Gamma^{J_{2}}\psi\right|
    \lesssim \epsilon v^{-2}u^{-\frac{5}{2}+\delta_{1}}.
\end{equation*}
It follows from Lemma~\ref{le:Linfty} that 
\begin{equation*}
    \left|\Gamma^{J}\phi\right|
    \lesssim 
    \epsilon v^{-1}u^{-1}+
    \epsilon v^{-1}\ln^{}v
    \lesssim \epsilon v^{-1}\ln v,\quad 
    \mbox{for any} \ \ |J|\le N-3.
\end{equation*}
Combining the above estimate with~\eqref{est:t-rpoint}, we complete the proof of estimates for $\phi$.

\smallskip
Last, recall from \eqref{equ:main-new} that the scalar $\bar{\psi} = \psi + \frac{1}{2}\phi^2$ satisfies $-\Box \bar{\psi}=\phi(\pt \psi)^{2}$.
Using the above estimates for $\phi$ and $\psi$, for $J\in \mathbb{N}^{+}$ with $|J|\le N-3$, we have 
\begin{equation*}
   \left|  \Gamma^{J}\left(\phi(\pt \psi)^{2}\right)\right|\lesssim
   \epsilon^3 u^{-\frac{5}{2}+\delta_1}v^{-3}\ln v,\quad \mbox{for any}\ 0<\delta_{1}\ll 1.
\end{equation*}
It follows from Lemma~\ref{le:Linfty} that 
\begin{equation*}
    \big| \Gamma^{J}\bar{\psi}\big|\lesssim \epsilon u^{-1}v^{-1}\ln u\Longrightarrow 
    \left|\Gamma^{J}\psi\right|
\lesssim \epsilon u^{-1}v^{-1}\ln u + \epsilon v^{-2}\ln^{2} v.
\end{equation*}
Combining the above estimates with~\eqref{est:t-rpoint}, we complete the proof.
\end{proof}

Note that, from~\eqref{est:t-rpoint} and Proposition~\ref{prop:existence}, for any $(i_{1},i_{2},i_{3},i_{4},I)\in \mathbb{N}^{15}$ with $i_{1}+i_{2}+i_{3}+i_{4}+|I|\le N-3$, we infer
\begin{subequations}\label{est:almost1}
\begin{align}
    &\left| \partial_t^{i_{1}} U^{i_{2}}V^{i_{3}} \Deltas^{i_{4}} \Gamma^{I}\phi\right|\lesssim_{N} \epsilon v^{-1-i_{3}}u^{-i_{1}-i_{2}}\ln^{}v,\\
    &\left|\partial_t^{i_{1}} U^{i_{2}}V^{i_{3}} \Deltas^{i_{4}} \Gamma^{I}\psi\right|\lesssim_{N} \epsilon v^{-1-i_{3}}u^{-1-i_{1}-i_{2}}  { \big(\ln u + (u/v)\ln^{2} v\big)}.
    \end{align}
\end{subequations}
Using again $-\Box \bar{\psi} = \phi (\partial_t \psi)^2$ and~\eqref{est:t-rpoint}, we also find
\begin{equation}\label{est:almost2}
		\big|\partial_t^{i_{1}}U^{i_2} V^{i_3} \Deltas^{i_4} \Gamma^{I} \bar{\psi}\big|\\
        \lesssim_{N} \epsilon v^{-1-i_3} u^{-1-i_1-i_2} \ln u.
       \end{equation}
The above estimates will be used frequently in the remainder of this article.
  
Furthermore, in view of the above estimates \eqref{est:almost1} and the definitions of $\{\mathfrak{c}_i\}_{i=1}^{4}$ in Theorem \ref{thm:main1}, we directly have
\begin{equation}
\label{eq:roughboundofthepreciseconstants}
|\mathfrak{c}_1|+|\mathfrak{c}_3| \lesssim \ep^2\quad \mbox{and}\quad |\mathfrak{c}_2|+|\mathfrak{c}_4| \lesssim \ep^3.
\end{equation}

\section{Precise decay of the solution}\label{sec:precise-decay}

In this section, we establish the precise decay for the solution to the coupled wave system \eqref{equ:main}. In Section \ref{subsect:alternativeformsofwaveeqs}, we first give some alternative forms of this wave system. Then, the precise decay for $(\phi,\psi,\bar{\psi})$  in Regions $\mathrm{II}$ and $\mathrm{I}$ are studied in Sections \ref{subsect:precise:regionII} and \ref{subsect:precise:regionI}, respectively. Meanwhile, in Section \ref{SS:0order}, we also establish the precise decay for the zeroth-order mode of $(\phi, \bar{\psi})$ globally in the full space-time region for $u\geq 2$.

\smallskip
For further reference, we define the following two different types of interior and exterior space-time regions:
\begin{equation}
\lab{def:differentspacetimeregions}
\begin{aligned}
\mathcal{C}_{\rm{int},\delta}=\big\{r\le \frac{1}{2}u^{1-\delta}\big\}\quad &\mbox{and}\quad 
\mathcal{C}_{\rm{ext},\delta}=\big\{r\ge \frac{1}{2}u^{1-\delta}\big\},\\
\mathcal{D}_{\rm{int},\delta}=\big\{r\le \frac{1}{2}u^{1+\delta}\big\}\quad &\mbox{and}\quad 
\mathcal{D}_{\rm{ext},\delta}=\big\{r\ge \frac{1}{2}u^{1+\delta}\big\}.
\end{aligned}
\end{equation}

\subsection{Alternative forms of the wave system}
\lab{subsect:alternativeformsofwaveeqs}

Recall that, for any two scalar functions $f$ and $g$, the wave equation $-\Box f=g$ can be rewritten as:
\begin{equation}\label{eq:alternativeformsofwaveeq}
UV f-2r^{-1}\partial_{r}f-r^{-2}\Deltas f=g\Longleftrightarrow UV(rf)-r^{-2}\Deltas (rf)=rg.
\end{equation}
Recall also the following scalars constructed from the solution $(\phi,\psi)$:
\begin{equation*}
\bar{\psi}=\psi+\frac{1}{2}\phi^2 \quad \mbox{and}\quad 
(\Phi,\Psi,\bar{\Psi})=(r\phi,r\psi,r\bar{\psi}).
\end{equation*}
Hence, from~\eqref{equ:main} and~\eqref{eq:alternativeformsofwaveeq}, it is easy to check that 
 \begin{equation}\label{equ:PhiPsi:waveUV}
\left\{
\begin{aligned}
    \left(UV-r^{-2}\Deltas\right)\Phi&=r^{-1}\left(\pt \Psi\right)^{2},\\
    \left(UV-r^{-2}\Deltas\right)\Psi&=rQ_0(\phi,\phi).
    \end{aligned}
\right.
 \end{equation}
Then, from~\eqref{eq:alternativeformsofwaveeq} and the definition of $(\Phi,\bar{\Psi})$, we also obtain
 \begin{equation}\label{equ:PhiTPsi1}
\left\{
\begin{aligned}
    \left(UV-r^{-2}\Deltas\right)\Phi&=r^{-1}\left(\pt \Psi\right)^{2},\\
    \left(UV-r^{-2}\Deltas\right)\bar{\Psi}&=r^{-2}\Phi\left(\pt \Psi\right)^{2}.  
    \end{aligned}
\right.
 \end{equation}

\subsection{Estimates in Region $\mathrm{II}$}
\lab{subsect:precise:regionII}


In this section, we establish the asymptotic behavior of the solution $(\phi, \bar{\psi})$ in the space-time Region II. Define 
\begin{equation}
\mathcal{C}_{\delta}:=\left\{r\ge \frac{1}{2}\exp(u^{\delta})\right\},\quad \ \mbox{for}\ \ 0<\delta\ll 1.
\end{equation}
In view of Definition \ref{def:regionsIandII:intro}, we have ${\textrm{II}}=\mathcal{C}_{\delta}$.

\subsubsection{Precise decay of $\phi$}

 We now derive the precise decay for $\phi$ in Region II. We start with the following asymptotic behavior for $V\Phi$ in the space-time region $\mathcal{C}_{\frac{\delta}{2}}$.
 
\begin{lemma}
\label{lem:preciseofVPhiinRegionII}
In the space-time region $\mathcal{C}_{\frac{\delta}{2}}$, we have 
\begin{equation}\label{est:VPhiF}
\left|V\Phi-2\mathfrak{c}_{3}v^{-1}\right|\lesssim \ep v^{-1} u^{-\delta}.
\end{equation}
\end{lemma}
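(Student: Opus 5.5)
We integrate the first equation of \eqref{equ:PhiPsi:waveUV} along the incoming direction, i.e.\ in $u$ at fixed $(v,\omega)$:
\begin{equation*}
U(V\Phi)=r^{-1}(\pt\Psi)^2+r^{-2}\Deltas\Phi .
\end{equation*}
Fix a point $(u,v,\omega)\in\mathcal{C}_{\frac{\delta}{2}}$ and integrate from $(v^{-1},v,\omega)\in\mathcal{H}_1$ to $(u,v,\omega)$, using $U=2\partial_u$. This gives
\begin{equation*}
V\Phi(u,v,\omega)=V\Phi(v^{-1},v,\omega)+\frac12\int_{v^{-1}}^{u}\Big(r^{-1}(\pt\Psi)^2+r^{-2}\Deltas\Phi\Big)(\tau,v,\omega)\,\d\tau .
\end{equation*}
The boundary term is bounded by $\ep\langle v\rangle^{-5/2}$, by \eqref{est:initsmall} (as in the proof of Lemma~\ref{le:Linfty}). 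The angular term is controlled by \eqref{est:almost1}: $|r^{-2}\Deltas\Phi|\lesssim \ep r^{-1}v^{-1}\ln v$. In $\mathcal{C}_{\frac\delta2}$ we have $r\simeq v$ (since $u\ll r$ there), so the integral of this term is at most $\ep u v^{-2}\ln v$. This is $\lesssim \ep v^{-1}u^{-\delta}$, because $v\gtrsim \exp(u^{\delta/2})$ makes $u^{1+\delta}\ln v/v$ tiny.

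The main term is $\frac12\int_0^u r^{-1}(\pt\Psi)^2\,\d\tau$. Since $r=\frac12(v-\tau)$, we write
\begin{equation*}
r^{-1}=2v^{-1}+O(\tau v^{-2}),
\end{equation*}
so this term equals
\begin{equation*}
v^{-1}\int_0^u(\pt\Psi)^2(\tau,v,\omega)\,\d\tau
\end{equation*}
up to an error. By \eqref{est:almost1}, $|\pt\Psi|\lesssim \ep \tau^{-2}\big(\ln\tau+(\tau/v)\ln^2 v\big)$ for $\tau\ge 2$, and $|\pt\Psi|\lesssim\ep$ for $\tau\le2$. Hence the error from the expansion of $r^{-1}$ is $\lesssim \ep v^{-2}\ln^2 u$, which is admissible.

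Next we replace $(\pt\Psi)^2(\tau,v,\omega)$ by its limit $(\pt\Psi)^2(\tau,+\infty,\omega)$. Note $V\pt\Psi=\pt V\Psi$, and \eqref{est:almost1} together with $\Psi=r\psi$ gives
\begin{equation*}
|V\pt\Psi|\lesssim \ep v^{-2}\tau^{-1}\ln^{2}v \quad\text{(roughly)}.
\end{equation*}
Integrating in $v'$ from $v$ to $\infty$ shows that the limit exists and that
\begin{equation*}
|\pt\Psi(\tau,v,\omega)-\pt\Psi(\tau,\infty,\omega)|\lesssim \ep v^{-1}\ln^2 v .
\end{equation*}
This contributes an error $\lesssim \ep v^{-2}\ln^2 v\,\ln u$, again fine in $\mathcal{C}_{\frac{\delta}{2}}$.

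Finally, completing the $\tau$-integral to $\infty$ costs
\begin{equation*}
v^{-1}\int_u^\infty \ep^2\tau^{-4+}\,\d\tau\lesssim \ep v^{-1}u^{-3+}.
\end{equation*}
What remains is
\begin{equation*}
v^{-1}\int_0^\infty(\pt\Psi)^2(\tau,\infty,\omega)\,\d\tau=2\mathfrak c_3 v^{-1}.
\end{equation*}

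The main obstacle is the uniform control of the $v$-derivative of $\pt\Psi$ in the step replacing $(\pt\Psi)^2(\tau,v,\omega)$ by its value at $\mathcal{I}$, because the pointwise bound for $\psi$ carries the $(u/v)\ln^2 v$ tail. We will first try to use \eqref{est:almost2} for $\bar\Psi$ and write $\Psi=\bar\Psi-\frac1{2r}\Phi^2$. The $\Phi^2$ correction has $\pt$ of size $\ep v^{-1}u^{-1}\ln^2 v$, which is still $o(1)$ as $v\to\infty$ and acceptable after integration against the decay of the other factor.
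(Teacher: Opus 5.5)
Your overall route is the paper's: integrate the $\Phi$-equation along $U$ from $\mathcal{H}_1$, move the source term to $\mathcal{I}$, and discard the tails. Expanding $r^{-1}=2v^{-1}+O(\tau v^{-2})$ separately, instead of working with $\frac{v}{r}(\pt\Psi)^2$ as the paper does, is only a cosmetic difference. Your boundary, angular, expansion and tail estimates are fine. Two small corrections: the integral runs over $[v^{-1},u]$, not $[0,u]$, and you need to add the harmless piece $v^{-1}\int_0^{v^{-1}}(\pt\Psi)^2(\tau,+\infty,\omega)\,\mathrm{d}\tau\lesssim\ep^2v^{-2}$.

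There is, however, a gap at the step that passes to $\mathcal{I}$. The bound $|V\pt\Psi|\lesssim\ep v^{-2}\tau^{-1}\ln^2v$ does \emph{not} follow from \eqref{est:almost1} and $\Psi=r\psi$. Indeed $V\pt\Psi=\pt\psi+rV\pt\psi$, and \eqref{est:almost1} only gives $|\pt\psi|+r|V\pt\psi|\lesssim\ep v^{-1}u^{-2}(\ln u+(u/v)\ln^2v)$. The extra power $v^{-1}$ comes from a cancellation between these two terms (their $U\psi$ parts cancel), which pointwise bounds on $\psi$ cannot detect. The same happens if you write $\pt V\Psi$ or use $vV=S+\frac{x^a}{r}L_a$. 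With only $v^{-1}$ decay, $\int_v^{+\infty}V\pt\Psi\,\mathrm{d}v'$ diverges logarithmically. You then obtain neither the existence of $\pt\Psi(\tau,+\infty,\omega)$ nor the rate $\ep v^{-1}\ln^2v$ for the difference.

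The missing ingredient is the wave equation for the radiation field, which is exactly what the paper uses when bounding $V\big[\frac{v}{r}(\pt\Psi)^2\big]$. Write $2V\pt\Psi=UV\Psi+VV\Psi$ and substitute $UV\Psi=r^{-2}\Deltas\Psi+rQ_0(\phi,\phi)$ from \eqref{equ:PhiPsi:waveUV}. Then each of $VV\Psi=2V\psi+rVV\psi$, $r^{-2}\Deltas\Psi=r^{-1}\Deltas\psi$, and $rQ_0(\phi,\phi)$ (via Lemma \ref{le:null}) is $\lesssim\ep v^{-2}u^{-1}\ln^2v$ by \eqref{est:almost1} in $\mathcal{C}_{\frac{\delta}{2}}$. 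This is the bound you wanted, and the rest of your argument then goes through.

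You have also misplaced the difficulty. The $(u/v)\ln^2v$ tail is harmless in $\mathcal{C}_{\frac{\delta}{2}}$. The proposed detour through \eqref{est:almost2} and $\Psi=\bar\Psi-\frac{1}{2r}\Phi^2$ does not help either, since $V\pt\bar\Psi=\pt\bar\psi+rV\pt\bar\psi$ has the same structure and would again require the equation for $\bar\Psi$.
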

\begin{proof}
Multiplying both sides of the first line of \eqref{equ:PhiTPsi1} by $v$, we obtain 
\begin{equation*}
U\left(vV\Phi\right)=\frac{v}{r^{2}}\Deltas \Phi+\frac{v}{r}\left(\pt \Psi\right)^{2}.
\end{equation*}
Integrating this equation from $(u_{0}(v),v)$ where $u_{0}(v)$ is defined such that
\begin{equation}
\label{def:u_0(v)}
(u_{{0}}(v),v,\omega)\in \mathcal{H}_1\Longleftrightarrow u_{0}(v)=v^{-1},
\end{equation}
we deduce that
\begin{equation}
\label{eq:RegionII:VPhi:integralalongU}
\begin{aligned}
&\left(vV\Phi\right)(u,v)-\left(vV\Phi\right)(v^{-1},v)\\
&=\frac{1}{2}\int_{v^{-1}}^u \bigg(\frac{v}{r^{2}}\Deltas \Phi+\frac{v}{r}\left(\pt \Psi\right)^{2}\bigg)(\sigma, v)
\d \sigma\\
&=2\mathfrak{c}_{3}
-\int_{u}^{+\infty}\left(\pt \Psi\right)^{2}(\sigma,+\infty)\d \sigma
-\int_{0}^{v^{-1}}\left(\pt \Psi\right)^{2}(\sigma,+\infty)\d \sigma\\
&+\frac{1}{2}\int_{v^{-1}}^{u}\left(\frac{v}{r^{2}}\Deltas\Phi\right)(\sigma,v)\d \sigma
-\frac{1}{4}\int_{v}^{+\infty}\int_{v^{-1}}^{u}
V\left[\frac{v}{r}\left(\pt \Psi\right)^{2}\right]\left(\sigma,\nu\right)\d \sigma \d \nu.
\end{aligned}
\end{equation}
Here, we pushed the integral along constant $v$ to null infinity using the Fundamental Theorem of Calculus.

First, by the smallness condition ~\eqref{est:smallness1} of initial data $(\phi_{0},\psi_{0})$, we have
\begin{equation}
\lab{eq:RegionII:VPhi:initialdatacontrol}
  \left|  \left(vV\Phi\right)(v^{-1},v)\right|\lesssim
 \ep v^{-\delta}.
\end{equation}
Second, using $(u,v)\in \mathcal{C}_{\frac{\delta}{2}}$ and the almost sharp decay estimates~\eqref{est:almost1}, we deduce
\begin{equation}
\label{eq:RegionII:VPhi:lowerordercontrol}
\begin{aligned}
\left|\int_{u}^{+\infty}\left(\pt \Psi\right)^{2}(\sigma,+\infty)\d \sigma\right|
\lesssim&\epsilon^{2} \int_{u}^{+\infty} \left(\sigma^{-4} \ln^{2} \sigma\right)\d \sigma\lesssim \epsilon^{2}u^{-3}\ln^{2}u,\\
 \left|\int_{v^{-1}}^{u}\left(\frac{v}{r^{2}}\Deltas\Phi\right)(\sigma,v)\d \sigma\right|\lesssim&\epsilon v^{-2}\ln v\int_{v^{-1}}^{u}(v-\sigma)\d \sigma\lesssim \epsilon uv^{-1}\ln v\lesssim \ep v^{-\delta}.
\end{aligned}
\end{equation}
Then, using again the almost sharp decay estimates~\eqref{est:almost1}, we find 
\begin{equation}\label{eq:RegionII:VPhi:lowerordercontrol2}
    \left|\int_{0}^{v^{-1}}\left(\pt \Psi\right)^{2}(\sigma,+\infty)\d \sigma   \right|
    \lesssim \|\pt \Psi\|_{L^{\infty}}^{2}v^{-1}
\lesssim \ep^2 v^{-1}.
\end{equation}
Last, using $(U,V)=(\pt-\pr,\pt+\pr)$, we compute 
\begin{equation*}
\begin{aligned}
V\left[\frac{v}{r}\left(\pt \Psi\right)^{2}\right]
=-\frac{u}{r^{2}}\left(\pt \Psi\right)^{2}+\frac{v}{r}\left(\pt \Psi\right)\left(UV\Psi\right)
+\frac{v}{r}\left(\pt \Psi\right)\left(VV\Psi\right).
\end{aligned}
\end{equation*}
Therefore, from the wave equation \eqref{equ:PhiPsi:waveUV} and the almost sharp decay estimates~\eqref{est:almost1}, we infer that
\begin{equation*}
\begin{aligned}
\left|\left[V\left(\frac{v}{r}\left(\pt \Psi\right)^{2}\right)\right](u,v)\right|
&\lesssim \left|\pt \Psi\right|\left(\frac{u}{r^{2}}\left|\pt \Psi\right|+\frac{v}{r}\left|VV\Psi\right|\right)\\
&+\left|\pt \Psi\right|\left(v\left|Q_{0}(\phi,\phi)\right|+\frac{v}{r^{3}}\left|\Deltas \Psi\right|\right)\lesssim \ep^2v^{-2}u^{-3}\ln^4 v\ln^{2}u.
\end{aligned}
\end{equation*}
Integrating the above estimate over $ [v^{-1},u]\times
[v,+\infty)$, we obtain
\begin{equation}
\lab{eq:RegionII:VPhi:spacetimeintegralcontrol}
\left|\int_{v}^{+\infty}\int_{v^{-1}}^{u}V\left[\frac{v}{r}\left(\pt \Psi\right)^{2}\right]\left(\sigma,\nu\right)\d \sigma \d \nu\right|\lesssim \ep^{2} v^{-1}\ln^4 v.
\end{equation}
Combining the estimates \eqref{eq:RegionII:VPhi:initialdatacontrol}--\eqref{eq:RegionII:VPhi:spacetimeintegralcontrol} with \eqref{eq:RegionII:VPhi:integralalongU},
 we complete the proof for~\eqref{est:VPhiF}.
\end{proof}

Building on the above precise decay for $V\Phi$, we next compute the precise decay of $\phi$ in the space-time region $\mathcal{C}_{\delta}$.

\begin{proposition}\label{prop:pointphi}
In the space-time region $\mathcal{C}_{\delta}$, we have
\begin{equation}\label{est:pointphi}
\left|\phi(u,v,\omega)-\mathfrak{c}_{3}\phi_{L}(u,v)\right|\lesssim \epsilon u^{-\frac{\delta}{2}} {\phi_{L}}(u,v),
\end{equation}
where the function $\phi_{L}$ is defined as in \eqref{eq:phibarandpsibar}.
\end{proposition}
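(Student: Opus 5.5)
We integrate the estimate \eqref{est:VPhiF} of Lemma~\ref{lem:preciseofVPhiinRegionII} along the outgoing direction $V=2\partial_v$ at fixed $(u,\omega)$, starting from the boundary of $\mathcal{C}_{\frac{\delta}{2}}$. Given $(u,v,\omega)\in\mathcal{C}_\delta$, let $v_*(u)$ be the value of $v$ on $\{r=\frac12\exp(u^{\delta/2})\}$, so $v_*\simeq \exp(u^{\delta/2})$. Since $\exp(u^{\delta})\ge \exp(u^{\delta/2})$, the whole segment $\{(u,\nu,\omega):\nu\in[v_*,v]\}$ lies in $\mathcal{C}_{\frac\delta2}$. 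Writing $V\Phi=2\partial_v\Phi$, we get
\begin{equation*}
\Phi(u,v,\omega)=\Phi(u,v_*,\omega)+\mathfrak{c}_3\left(\ln v-\ln v_*\right)+O\Big(\epsilon u^{-\delta}\int_{v_*}^{v}\nu^{-1}\d\nu\Big).
\end{equation*}
The error term is bounded by $\epsilon u^{-\delta}\ln v$.

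Next we control the boundary term and the $\ln v_*$ term. By \eqref{est:almost1}, $|\Phi(u,v_*,\omega)|\lesssim r\epsilon v_*^{-1}\ln v_*\lesssim \epsilon\ln v_*\lesssim\epsilon u^{\delta/2}$, and \eqref{eq:roughboundofthepreciseconstants} gives $|\mathfrak{c}_3\ln v_*|\lesssim\epsilon^2u^{\delta/2}$. In $\mathcal{C}_\delta$ we have $\ln v\gtrsim u^{\delta}$, so $u^{\delta/2}\lesssim u^{-\delta/2}\ln v$. We also have $\ln u\lesssim \delta^{-1}\ln\ln v\ll u^{-\delta/2}\ln v$. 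Replacing $\ln v$ by $\ln v-\ln u$ therefore costs at most $\epsilon^2\ln u\lesssim \epsilon u^{-\delta/2}(\ln v-\ln u)$, using $\ln v-\ln u\simeq\ln v$ there. Collecting these bounds gives
\begin{equation*}
|\Phi-\mathfrak{c}_3(\ln v-\ln u)|\lesssim\epsilon u^{-\delta/2}(\ln v-\ln u).
\end{equation*}
Dividing by $r$ yields \eqref{est:pointphi}, since $\phi_L=r^{-1}(\ln v-\ln u)$.

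The main obstacle is making sure the boundary data at $v_*$ is genuinely lower order relative to $\ln v$. This is exactly why the lemma is stated on the larger region $\mathcal{C}_{\frac\delta2}$: its inner boundary sits at $\ln v_*\sim u^{\delta/2}$, while $\ln v\gtrsim u^{\delta}$ in $\mathcal{C}_\delta$, which leaves the gain $u^{-\delta/2}$.

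The only remaining point is the uniformity of the almost sharp bound $|\phi|\lesssim \epsilon v^{-1}\ln v$ from Proposition~\ref{prop:existence} in $\omega$. That bound is pointwise, so it applies at $(u,v_*,\omega)$ for every $\omega$.
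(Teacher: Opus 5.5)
Your proposal is correct and follows the paper's proof essentially step for step. The shared argument integrates \eqref{est:VPhiF} in $v$ from the curve $r=\frac12\exp(u^{\delta/2})$ and bounds the boundary value of $r\phi$ via \eqref{est:almost1} by $\epsilon\ln v_*\lesssim\epsilon u^{\delta/2}\lesssim\epsilon u^{-\delta/2}\ln v$. It then absorbs the $\mathfrak{c}_3\ln v_*$ and $\mathfrak{c}_3\ln u$ terms using \eqref{eq:roughboundofthepreciseconstants}. You additionally make explicit the comparability $\ln v-\ln u\simeq\ln v$ in $\mathcal{C}_\delta$ (with $\delta$-dependent constants), which the paper uses without comment.
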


\begin{proof}
Integrating the estimate~\eqref{est:VPhiF} from $(u,\gamma(u))$ where $\gamma(u)$ is defined such that
\begin{equation*}
(u,\gamma(u))\in\{r=\frac{1}{2}\exp(u^{\frac{\delta}{2}})\}\Longleftrightarrow
\gamma(u)=u+\exp (u^{\frac{\delta}{2}}),
\end{equation*}
we deduce that
\begin{equation*}
\begin{aligned}
(r\phi)(u,v)-(r\phi)(u,\gamma(u))
&=({\mathfrak{c}}_{3}+O(\ep u^{-\delta}))(\ln v-\ln u)\\
&+({\mathfrak{c}}_{3}+O(\ep u^{-\delta}))(\ln u-\ln \gamma(u)).
\end{aligned}
\end{equation*}
In view of the definitions for $(u,\gamma(u))$ and the space-time region $\mathcal{C}_{\delta}$, we find 
\begin{equation*}
    |\ln u-\ln \gamma(u)|\lesssim \ln u+\ln \gamma(u)\lesssim u^{-\frac{\delta}{2}}\ln v.
\end{equation*}
Next, using again the almost sharp decay estimates~\eqref{est:almost1}, we infer
\begin{equation*}
\left|(r\phi)(u,\gamma(u))\right|
\lesssim \epsilon (\gamma(u)-u)\gamma^{-1}(u)\ln \gamma(u)
\lesssim \epsilon \ln \gamma(u)\lesssim \epsilon u^{-\frac{\delta}{2}}\ln v.
\end{equation*}
Combining the above estimates with~\eqref{eq:roughboundofthepreciseconstants}, we complete the proof for~\eqref{est:pointphi}.
\end{proof}

\subsubsection{Precise decay of $\bar{\psi}$}
We now derive the precise decay for $\bar{\psi}$ in Region II. We start with a technical estimate for $V(\Phi/\ln v)$ in the space-time region $\mathcal{C}_{{\rm{ext}},2\delta}$.

\begin{lemma}\label{le:derivativePhiPsi}
In the space-time region $\mathcal{C}_{{\rm{ext}},2\delta}$, we have
    \begin{equation}\label{est:Secondtech}
\left|V\left(\frac{\Phi}{\ln v}\right)\right|\lesssim \frac{\ep \ln^{2} u}{v\ln^{2}v}.
\end{equation}
\end{lemma}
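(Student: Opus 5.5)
The plan is to reduce the claim to a bound on an auxiliary quantity that vanishes to leading order. Since $V=\pt+\pr$ and $V(\ln v)=2v^{-1}$, we have
\begin{equation*}
V\Big(\frac{\Phi}{\ln v}\Big)=\frac{1}{v\ln^{2}v}\,W,\qquad W:=v\ln v\,V\Phi-2\Phi .
\end{equation*}
So \eqref{est:Secondtech} is equivalent to $|W|\lesssim \ep\ln^{2}u$ in $\mathcal{C}_{{\rm{ext}},2\delta}$. Note that each of $v\ln v\,V\Phi$ and $2\Phi$ separately has size $\ep\ln v$ by Lemma~\ref{lem:preciseofVPhiinRegionII} and Proposition~\ref{prop:pointphi}. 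The point of the argument is that their leading parts, $2\mathfrak{c}_3\ln v$, cancel.

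\textbf{Step 1: transport equation for $W$.} Since $Uv=0$, the first line of \eqref{equ:PhiTPsi1} gives
\begin{equation*}
UW=\ln v\,\Big(\frac{v}{r}(\pt\Psi)^{2}+\frac{v}{r^{2}}\Deltas\Phi\Big)-2U\Phi
=-2X+\ln v\,\frac{u}{r}(\pt\Psi)^{2}+\ln v\,\frac{v}{r^{2}}\Deltas\Phi,
\end{equation*}
where $X=U\Phi-\ln v(\pt\Psi)^{2}$ is the quantity highlighted in the introduction. Here we used $v/r-2=u/r$. I will integrate this identity in $u$ along constant $v$, starting from the initial hyperboloid $u=v^{-1}$, where $|W|\lesssim\ep$ by \eqref{est:smallness1}. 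The middle term is bounded by $\ep^{2}u^{-3}\ln^{3}v$ via \eqref{est:almost1}, so it is harmless.

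For the angular term, \eqref{est:almost1} gives $|\Deltas\Phi|\lesssim\ep r\,v^{-1}\ln v$. Hence the term is $\lesssim\ep\ln^{2}v/r$, and its $u$-integral is $\lesssim\ep\,u\ln^{2}v/v$. This is $\lesssim\ep\ln^{2}u$ both when $v\sim u$ and when $v\gg u$, since $u\ln^{2}v/v$ is bounded for $v\ge u$. It remains to show $|X|\lesssim\ep u^{-1}\ln u$ (up to $\ep^{2}$-small faster terms), since $\int^{u}\sigma^{-1}\ln\sigma\,\d\sigma\sim\ln^{2}u$.

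\textbf{Step 2: estimate of $X$.} Using \eqref{equ:PhiTPsi1} for $UV\Phi$ and $V\ln v=2/v$, compute
\begin{equation*}
VX=\frac{u}{rv}(\pt\Psi)^{2}+\frac{1}{r^{2}}\Deltas\Phi-2\ln v\,(\pt\Psi)(\pt V\Psi).
\end{equation*}
By \eqref{est:almost1}, the first and third terms are $O(\ep^{2}v^{-2}u^{-2}\ln^{3}v)$. I then integrate in $v$ from the inner boundary $r=\frac12 u^{1-2\delta}$ of $\mathcal{C}_{{\rm{ext}},2\delta}$. There $v\sim u$, and \eqref{est:almost1} gives $|U\Phi|\le r|U\phi|+|\phi|\lesssim\ep u^{-1}\ln u$, while $\ln v(\pt\Psi)^{2}$ is far smaller. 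So at the starting point $|X|\lesssim\ep u^{-1}\ln u$.

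\textbf{Main obstacle.} The main obstacle is the angular term $r^{-2}\Deltas\Phi$ in $VX$. With only \eqref{est:almost1} it is $\lesssim\ep\ln\nu/(r\nu)$, and its integral from $r\sim u^{1-2\delta}$ loses a factor $u^{2\delta}$.

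To handle this I would first exploit that, in the $v$-integration, $r\gtrsim\nu$ once $\nu\ge 2u$. On that range the integrand is $\lesssim\ep\nu^{-2}\ln\nu$, which integrates to $\ep u^{-1}\ln u$. On the short range $\nu\in[v_b,2u]$, I would replace \eqref{est:almost1} by the sharper bound $|\Deltas\phi|\lesssim|\Gamma^{\le2}\phi|$ combined with the improved decay $|\Gamma\phi|\lesssim\ep v^{-1}\ln v$ near $r\sim u$. If the $u^{2\delta}$ loss still persists, the fallback is to absorb the angular part by applying the argument to $\Phi_{\ell=0}$ and $\Phi_{\ge1}$ separately. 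This works because $\Deltas$ annihilates the zeroth mode, and for $\Phi_{\ge1}$ Poincaré's inequality on the sphere allows using $\Omega$-commuted estimates.

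Combining Steps 1 and 2 then gives $|W|\lesssim\ep\ln^{2}u$, and hence \eqref{est:Secondtech}.
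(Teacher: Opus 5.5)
Your $W$ equals $v\ln^{2}v\,V(\Phi/\ln v)$, so your identity for $UW$ and your use of $X$ coincide with the paper's argument. The gap is that you run the transport argument on all of $\mathcal{C}_{{\rm{ext}},2\delta}$, and near the cone it does not close.

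\textbf{Step 1.} You have $\int_{v^{-1}}^{u}r^{-1}\,\d\sigma=2\ln\frac{v-v^{-1}}{v-u}$. This is $\lesssim u/v$ only when, say, $u\le v/2$. At the inner boundary $r=\frac12u^{1-2\delta}$ it is $\approx 4\delta\ln u$. So with $|\Deltas\Phi|\lesssim\ep r v^{-1}\ln v$, the angular term only yields $\ep\delta\ln^{3}u$, not $\ep\ln^{2}u$.

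\textbf{Step 2.} The obstacle you flag is genuine. The loss is a factor $\delta\ln u$ rather than $u^{2\delta}$, and after the $U$-integration it again produces $\ln^{3}u$. Your remedies do not remove it. The bound $|\Deltas\phi|\lesssim|\Gamma^{\le 2}\phi|\lesssim\ep v^{-1}\ln v$ is just \eqref{est:almost1} again. Splitting off $\Phi_{\ell=0}$ does not help by itself, because the higher modes are not smaller than the zeroth mode near the cone; Lemma~\ref{le:region1} only gives an improvement in Region I.

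The missing observation, which is exactly how the paper proceeds, is that no cancellation is needed where $\ln v\sim\ln u$.
\begin{itemize}
\item On $\mathcal{C}_{{\rm{ext}},2\delta}\cap\mathcal{D}_{{\rm{int}},2\delta}$ one has $v\le u+u^{1+2\delta}$. Then \eqref{est:almost1} gives directly $|V\Phi|\le|\phi|+r|V\phi|\lesssim\ep v^{-1}\ln v$ and $|\Phi|\lesssim\ep\ln v$. Hence $|W|\lesssim\ep\ln^{2}v\lesssim\ep\ln^{2}u$.
\item The transport argument is needed only on $\mathcal{D}_{{\rm{ext}},2\delta}$. There the segment $\{(\sigma,v):v^{-1}\le\sigma\le u\}$ stays in $\mathcal{D}_{{\rm{ext}},2\delta}$ with $r\gtrsim v$, so your Step 1 bound $\ep u\ln^{2}v/v$ becomes valid.
\item For $X$, start the $V$-integration on the curve $r=\frac12u^{1+2\delta}$ instead of $r=\frac12u^{1-2\delta}$. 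On that curve $\ln v\sim\ln u$, so the two-line bound you used at your starting point already gives $|X|\lesssim\ep u^{-1}\ln u$. Beyond it $r\gtrsim\nu$, so $r^{-2}|\Deltas\Phi|\lesssim\ep\nu^{-2}\ln\nu$ integrates harmlessly and the obstacle disappears.
\end{itemize}

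Two smaller points.
\begin{itemize}
\item The last term of $VX$: \eqref{est:almost1} alone only gives $|\pt V\Psi|\lesssim\ep v^{-1}u^{-2}\ln^{2}v$, whose $\nu$-integral diverges. To get the $v^{-2}$ decay you claim, write $2\pt V\Psi=UV\Psi+VV\Psi$ and substitute $UV\Psi=r^{-2}\Deltas\Psi+rQ_0(\phi,\phi)$ from \eqref{equ:PhiPsi:waveUV}, as in \eqref{eq:non-trans}.
\item The middle term of Step 1 should be bounded by $\ep^{2}v^{-1}u^{-3}\ln^{5}v$, keeping the factor $r/v^{2}$. The bound $\ep^{2}u^{-3}\ln^{3}v$ as written would integrate to $\ep^{2}\ln^{3}v$, which is too large.
\end{itemize}
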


\begin{proof}
Note that, from definition \eqref{def:differentspacetimeregions}, it holds true that
\begin{equation*}
\mathcal{C}_{{\rm{ext}},2\delta}=\big(\mathcal{C}_{\rm{ext},2\delta}
\cap \mathcal{D}_{\rm{int},2\delta}\big)\cup \mathcal{D}_{\rm{ext},2\delta}.
\end{equation*}
We split the proof into the following two parts according to the subregion under consideration. Note that it suffices to consider the case where $u$ is sufficiently large.

\smallskip
\textbf{Step 1.} Estimate in $\mathcal{C}_{\rm{ext},2\delta}
\cap \mathcal{D}_{\rm{int},2\delta}$. 
In view of the almost sharp decay estimates~\eqref{est:almost1} and the definition of $\Phi$, we infer
\begin{equation*}
    \left|V\left(\frac{\Phi}{\ln v}\right)\right|\lesssim 
    \left|\frac{rV\phi}{\ln v}\right|
+\left|\frac{\phi}{\ln v}\right|
    +\left|\frac{2r\phi}{v\ln^{2}v}\right|\lesssim \frac{\epsilon}{v}.
\end{equation*}
Note that, in this space-time region, we directly have 
\begin{equation*}
  u+  u^{1-2\delta}\le v\le u+u^{1+2\delta}\Longrightarrow 
    \ln v\sim \ln u,
\end{equation*}
which completes the proof of the estimate \eqref{est:Secondtech} in this region.

\smallskip
\textbf{Step 2.} Estimate in $\mathcal{D}_{\rm{ext},2\delta}$.  We claim that, in the space-time region $\mathcal{D}_{\rm{ext},2\delta}$, 
\begin{equation}\label{est:Firsttech}
\left|U\Phi-\ln v \left(\pt \Psi\right)^{2}\right|\lesssim \ep u^{-1}\ln u.
\end{equation}
Indeed, by the Fundamental Theorem of Calculus, we deduce that
\begin{equation}
\lab{esti:UPhi-logvptPsi2:Dext2delta:formula}
\begin{aligned}
\left[U\Phi-\ln v\left(\pt \Psi\right)^{2}\right](u,v)
&=\left[U\Phi-\ln v\left(\pt \Psi\right)^{2}\right](u,\gamma(u))\\
&+\frac{1}{2}\int_{\gamma(u)}^{v}
V\left[\left(U\Phi-\ln v\left(\pt \Psi\right)^{2}\right)\right](u,\nu)\d \nu.
\end{aligned}
\end{equation}
Here, $(u,\gamma(u))$ is chosen such that it lies on the curve $r=\frac{1}{2}u^{1+2\delta}$, that is,
\begin{equation*}
\gamma(u)=u+u^{1+2\delta}\Longrightarrow \ln \gamma(u)\sim \ln u.
\end{equation*}
Therefore, using again
the almost sharp decay estimates~\eqref{est:almost1}, we have
\begin{equation}
\lab{esti:UPhi-logvptPsi2:gammau}
    \left|\left[U\Phi-\ln v\left(\pt \Psi\right)^{2}\right](u,\gamma(u))\right|\lesssim \ep u^{-1}\ln u.
\end{equation}
On the other hand, using again the system \eqref{equ:PhiPsi:waveUV}, we compute 
\begin{equation}\label{eq:non-trans}
\begin{aligned}
V\left[U\Phi-\ln v\left(\pt \Psi\right)^{2}\right]
&=\frac{u}{rv}\left(\pt \Psi\right)^{2}
-\frac{\ln v}{r^{2}}(\pt \Psi)(\Deltas\Psi)\\
&+\frac{1}{r^{2}}\Deltas\Phi-\ln v
(\pt \Psi)\left(rQ_{0}(\phi,\phi)+VV\Psi\right).
\end{aligned}
\end{equation}
Therefore, using the almost sharp decay estimates~\eqref{est:almost1}, we infer that, for any $(u,v)\in \mathcal{D}_{\rm{ext},2\delta}$,
\begin{equation*}
\left|V\left[U\Phi-\ln v\left(\pt \Psi\right)^{2}\right](u,v)\right|\lesssim \ep v^{-2}u^{-1}\ln^{5}v+\ep v^{-2}\ln v,
\end{equation*}
which then yields 
\begin{equation*}
\begin{aligned}
&\left|\int_{\gamma(u)}^{v}
V\left[\left(U\Phi-\ln v\left(\pt \Psi\right)^{2}\right)\right](u,\nu)\d \nu\right|\\
&\lesssim
\ep \int_{\gamma(u)}^{v}\left(\nu^{-2}u^{-1}\ln^5 \nu +\nu^{-2}\ln \nu \right)\d \nu\lesssim \ep u^{-1}\ln u.
\end{aligned}
\end{equation*}
Combining the above estimate with \eqref{esti:UPhi-logvptPsi2:Dext2delta:formula} and \eqref{esti:UPhi-logvptPsi2:gammau}, we complete the proof of~\eqref{est:Firsttech}.

\smallskip
Then, integrating along $U$ from $u_{0}(v)$, where $u_0(v)$ is given as in \eqref{def:u_0(v)} and equals $v^{-1}$, 
we deduce that, for any $(u,v)\in \mathcal{D}_{\rm{ext},2\delta}$,
\begin{equation*}
\begin{aligned}
    V\left(\frac{\Phi}{\ln v}\right)(u,v)
    &=V\left(\frac{\Phi}{\ln v}\right)(v^{-1},v)+\frac{1}{2}\int_{v^{-1}}^{u}UV\left(\frac{\Phi}{\ln v}\right)(\sigma,v)\d \sigma.
    \end{aligned}
\end{equation*}
In view of the initial data condition~\eqref{est:smallness1}, it is easy to check that
\begin{equation*}
\left|V\left(\frac{\Phi}{\ln v}\right)(v^{-1},v)\right|\lesssim 
(\ln v)^{-1}\left(|\phi(v^{-1},v)|+|vV\phi(v^{-1},v)|\right)\lesssim
\ep v^{-1}(\ln v)^{-2}.
\end{equation*}
On the other hand, using again \eqref{equ:PhiPsi:waveUV}, we compute
\begin{equation*}
UV\left(\frac{\Phi}{\ln v}\right)
=
\frac{1}{r^{2}}\frac{\Deltas\Phi}{\ln v}
-\frac{2}{v\ln ^{2}v}\left[U\Phi-\ln v\left(\pt \Psi\right)^{2}\right]
+\frac{u}{rv}\left[\frac{\left(\pt \Psi\right)^{2}}{\ln v}\right].
\end{equation*}
Plugging the above identity back into~\eqref{est:almost1} and~\eqref{est:Firsttech}, we obtain
\begin{equation*}
\left|UV\left(\frac{\Phi}{\ln v}\right)(u,v)\right|\lesssim 
\frac{\epsilon\ln u}{vu\ln^2 v}
\Longrightarrow 
\left| \int_{v^{-1}}^{u}UV\left(\frac{\Phi}{\ln v}\right)(\sigma,v)\d \sigma\right|\lesssim \frac{\ep \ln^{2} u}{v\ln^{2}v}.
\end{equation*}
Combining the above estimates, we complete the proof of~\eqref{est:Secondtech} in this region.
\end{proof}

Next, we make use of the above precise decay for $V(\Phi/\ln v)$ to study the asymptotic behavior for $V\bar{\Psi}$ in the space-time region $\mathcal{C}_{\frac{\delta}{2}}$.
\begin{lemma}\label{le:VPsiF}
In the space-time region $\mathcal{C}_{\frac{\delta}{2}}$, we have 
\begin{equation}\label{est:VPsiF}
\left|V\bar{\Psi}-
2\mathfrak{c}_{4}v^{-2}\ln v\right|\lesssim 
\ep v^{-2} u^{-\frac{\delta}{4}}\ln v .
\end{equation}
\end{lemma}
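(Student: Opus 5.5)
We follow the proof of Lemma~\ref{lem:preciseofVPhiinRegionII}, now using the second line of \eqref{equ:PhiTPsi1}. Multiply it by $v^{2}/\ln v$. Since $V(v^{2}/\ln v)$ is of size $v/\ln v$, it is cleaner to integrate $U$ of $v^{2}V\bar\Psi/\ln v$ directly; we have $U(v^2/\ln v)=0$ because $v^2/\ln v$ depends only on $v$. This gives
\begin{equation*}
U\Big(\frac{v^{2}}{\ln v}V\bar{\Psi}\Big)=\frac{v^{2}}{r^{2}\ln v}\Deltas\bar{\Psi}+\frac{v^{2}}{r^{2}}\,\frac{\Phi}{\ln v}\,(\pt\Psi)^{2}.
\end{equation*}
We integrate in $\sigma$ from $v^{-1}$ (the point on $\mathcal{H}_1$) to $u$ at fixed $v$. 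The initial-data term satisfies $|(v^{2}V\bar\Psi/\ln v)(v^{-1},v)|\lesssim\ep v^{-\delta}$, by the smallness condition \eqref{est:smallness1} with weight $\langle x\rangle^{N+2}$. The angular term is bounded with \eqref{est:almost2}: $|v^{2}r^{-2}\Deltas\bar\Psi|\lesssim \ep v\,\sigma^{-1}\ln\sigma$ after using $r\sim v$. Integrating over $[v^{-1},u]$ yields $\ep v\ln^{2}u/\ln v$. This is too large in absolute terms, but after dividing by $v^2/\ln v$ it contributes $\ep v^{-1}\ln^2 u$ to $V\bar\Psi$. That is much smaller than $v^{-2}\ln v$ only if $v\lesssim \ln v/\ln^2u$, which is false. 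So the crude bound is not enough. I would redo it using the sharper decay $|\Deltas\bar\Psi|\lesssim r|\Gamma^2\bar\psi|\lesssim \ep r v^{-1}\sigma^{-1}\ln\sigma$. This gives $v^2r^{-2}|\Deltas\bar\Psi|\lesssim \ep\sigma^{-1}\ln\sigma$, whose integral is $\ep\ln^2u\lesssim \ep u^{-\delta/4}\ln v$ in $\mathcal{C}_{\delta/2}$, since $\ln v\gtrsim u^{\delta/2}$ there. That is acceptable.

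The main term is $\frac{1}{2}\int_{v^{-1}}^{u}\frac{v^{2}}{r^{2}}\frac{\Phi}{\ln v}(\pt\Psi)^{2}\,\d\sigma$. As in \eqref{eq:RegionII:VPhi:integralalongU}, we push the integrand to null infinity by writing it as its value at $\nu=+\infty$ minus $\frac{1}{2}\int_v^\infty V[\cdots]\,\d\nu$. The value at infinity is $4\,\frac{r\phi}{\ln v}(\pt\Psi)^2(\sigma,+\infty,\omega)$, because $v^2/r^2\to4$. Integrating over $\sigma\in[0,\infty)$ gives $4\mathfrak{c}_4$, and after the factor $\frac12$ the main term is $2\mathfrak{c}_4$, which matches \eqref{est:VPsiF}. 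The finiteness of the limit $\Phi/\ln v$ is guaranteed by Proposition~\ref{prop:pointphi}. The tails are controlled by \eqref{est:almost1}. The tail $\int_u^\infty$ is $\lesssim\ep^3u^{-3}\ln^3u$, and the tail $\int_0^{v^{-1}}$ is $\lesssim\ep^3v^{-1}$.

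The key step, and the main obstacle, is the spacetime error $\int_v^\infty\int_{v^{-1}}^uV[\cdots]$. We expand
\begin{equation*}
V\Big[\frac{v^2}{r^2}\frac{\Phi}{\ln v}(\pt\Psi)^2\Big]=V\Big(\frac{v^2}{r^2}\Big)\frac{\Phi}{\ln v}(\pt\Psi)^2+\frac{v^2}{r^2}V\Big(\frac{\Phi}{\ln v}\Big)(\pt\Psi)^2+\frac{2v^2}{r^2}\frac{\Phi}{\ln v}(\pt\Psi)V\pt\Psi .
\end{equation*}
The first term carries a factor $u/r^{2}$ and is harmless. The third term is handled by writing $V\pt\Psi$ as a combination of $VV\Psi$, $UV\Psi$ and $\pt$-terms, then using \eqref{equ:PhiPsi:waveUV}, exactly as in the proof of Lemma~\ref{lem:preciseofVPhiinRegionII}. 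This gives a bound of order $\ep^3\nu^{-2}\sigma^{-3}$ times powers of logarithms. For the second term, a naive bound $|V\Phi|\lesssim\ep/v$ would produce a $\ln v$ loss, and this is precisely where Lemma~\ref{le:derivativePhiPsi} is needed. In the region $\mathcal{C}_{\rm{ext},2\delta}$, which contains the integration range up to a negligible piece near $\sigma=v^{-1}$, handled separately by small-$u$ local estimates, that lemma gives $|V(\Phi/\ln v)|\lesssim\ep\ln^2\sigma/(\nu\ln^2\nu)$. The $\nu$-integral of $\nu^{-1}\ln^{-2}\nu$ converges and produces $(\ln v)^{-1}$. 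The $\sigma$-integral of $(\pt\Psi)^2\lesssim\ep^2\sigma^{-4}\ln^2\sigma$ converges.

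Collecting terms, $\frac{v^2}{\ln v}V\bar\Psi=2\mathfrak{c}_4+O(\ep u^{-\delta/4})$. The error is of size $O(\ep u^{-\delta/4})$ rather than $O(\ep u^{-\delta/4}\ln v)$, because the largest contributions, from the angular term and the initial data, are bounded by $\ep u^{-\delta/4}\ln v$ before division. Dividing by $v^2/\ln v$ then gives \eqref{est:VPsiF}. The delicate bookkeeping is to check that every residual piece is at most $\ep u^{-\delta/4}$ relative to the main term. In the worst cases we use $u\le(\ln 2r)^{2/\delta}$ in $\mathcal{C}_{\delta/2}$ to convert powers of $\ln u$ into the small factor $u^{-\delta/4}$.
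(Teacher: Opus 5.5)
Your proposal is correct and follows the paper's proof essentially step by step. The shared steps are: the multiplier $v^{2}/\ln v$; integration along $U$ from $\mathcal{H}_1$, with the main term pushed to $\mathcal{I}$ to produce $2\mathfrak{c}_4$; the same three-term expansion of $V\big[(v/r)^2(\Phi/\ln v)(\pt\Psi)^2\big]$; and Lemma~\ref{le:derivativePhiPsi} to obtain the $(\ln v)^{-1}$ gain in the spacetime error. Your first ``crude'' bound on the angular term has a spurious factor $v$, but you discard it; the corrected bound $|\Deltas\bar\Psi|\lesssim \ep r v^{-1}\sigma^{-1}\ln\sigma$ from \eqref{est:almost2} is exactly what the paper uses, giving a contribution $\ep\ln^2u/\ln v\lesssim\ep u^{-\delta/4}$.
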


\begin{proof}
By~\eqref{equ:PhiTPsi1}, we compute
\begin{equation*}
U\left(\frac{v^{2}}{\ln v}V\bar{\Psi}\right)
=\left(\frac{v}{r}\right)^{2}\frac{\Deltas \bar{\Psi}}{\ln v}
+\left(\frac{v}{r}\right)^{2}\left(\frac{\Phi}{\ln v}\right)\left(\pt \Psi\right)^{2}.
\end{equation*}
It follows from the Fundamental Theorem of Calculus that
\begin{equation*}
\begin{aligned}
&\left(\frac{v^{2}}{\ln v}V\bar{\Psi}\right)(u,v)-\left(\frac{v^{2}}{\ln v}V\bar{\Psi}\right)(v^{-1},v)\\
&=2\mathfrak{c}_{4}-2\int_{u}^{+\infty}\left[\left(\frac{\Phi}{\ln v}\right)\left(\pt \Psi\right)^{2}\right](\sigma,+\infty)\d \sigma
-
2\int_{0}^{v^{-1}}\left[\left(\frac{\Phi}{\ln v}\right)\left(\pt \Psi\right)^{2}\right](\sigma,+\infty)\d \sigma\\
&-\frac{1}{4}\int_{v}^{+\infty}\int_{v^{-1}}^{u}V\left[\left(\frac{v}{r}\right)^{2}\left(\frac{\Phi}{\ln v}\right)\left(\pt \Psi\right)^{2}\right]\left(\sigma,\nu\right)\d \sigma \d \nu
+\frac{1}{2}\int_{v^{-1}}^{u}
\left(\frac{v}{r}\right)^{2}\frac{\Deltas \bar{\Psi}}{\ln v}(\sigma,v)
\d \sigma.
\end{aligned}
\end{equation*}

First, by the initial data condition~\eqref{est:smallness1}, we have
\begin{equation*}
   \left| \left(\frac{v^{2}}{\ln v}V\bar{\Psi}\right)(v^{-1},v)\right|\lesssim \ep v^{-\delta}.
\end{equation*}
Then, using the almost sharp decay estimates \eqref{est:almost1}, we deduce
\begin{align*}
\left|\int_{u}^{+\infty}\left[\left(\frac{\Phi}{\ln v}\right)\left(\pt \Psi\right)^{2}\right](\sigma,+\infty)\d \sigma\right|
&\lesssim \epsilon \int_{u}^{+\infty} \sigma^{-4+\delta}\d \sigma\lesssim  \ep u^{-3+\delta},\\
\bigg|\int_{0}^{v^{-1}}\left[\left(\frac{\Phi}{\ln v}\right)\left(\pt \Psi\right)^{2}\right](\sigma,+\infty)\d \sigma   \bigg|
&\lesssim v^{-1}
\bigg\|\frac{\Phi}{\ln v}(\pt \Psi)^{2}\bigg\|_{L^{\infty}}
\lesssim
\ep v^{-1}.
\end{align*}
Next, using the almost sharp decay estimates \eqref{est:almost2} for $\bar{\psi}$, we deduce 
\begin{equation*}
    \left|\left(\frac{v}{r}\right)^{2}\frac{\Deltas \bar{\Psi}}{\ln v}(u,v)\right|\lesssim \ep u^{-1}(\ln u)(\ln v)^{-1}\Longrightarrow
\left|\int_{v^{-1}}^{u}
\left(\frac{v}{r}\right)^{2}\frac{\Deltas \bar{\Psi}}{\ln v}(\sigma,v)
\d \sigma\right|\lesssim \ep u^{-\frac{\delta}{4}}.
\end{equation*}
In view of the above estimates, we infer
\begin{equation*}
\begin{aligned}
\left(\frac{v^{2}}{\ln v}V\bar{\Psi}\right)(u,v)
&=2\mathfrak{c}_{4}+O\left(\ep v^{-\delta}+\ep v^{-1}+\ep u^{-\frac{\delta}{4}}+\ep u^{-3+\delta}\right)\\
&-\frac{1}{4}\int_{v}^{+\infty}\int_{v^{-1}}^{u}V\left[\left(\frac{v}{r}\right)^{2}\left(\frac{\Phi}{\ln v}\right)\left(\pt \Psi\right)^{2}\right]\left(\sigma,\nu\right)\d \sigma \d \nu.
\end{aligned}
\end{equation*}
Using the wave equations \eqref{equ:PhiPsi:waveUV}, we check that
\begin{equation*}
\begin{aligned}
&V\left[\left(\frac{v}{r}\right)^{2}\left(\frac{\Phi}{\ln v}\right)\left(\pt \Psi\right)^{2}\right]\\
&=-2\frac{uv}{r^{3}}\left(\frac{\Phi}{\ln v}\right)\left(\pt \Psi\right)^{2}+\left(\frac{v}{r}\right)^{2}\left(V\left(\frac{\Phi}{\ln v}\right)\right)\left(\pt \Psi\right)^{2}\\
&+\left(\frac{v}{r}\right)^{2}\left(\frac{\Phi}{\ln v}\right)\left(\pt \Psi\right)
\left(r^{-2}\Deltas \Psi+rQ_{0}(\phi,\phi)+VV\Psi\right).
\end{aligned}
\end{equation*}
Hence, using Lemma~\ref{le:derivativePhiPsi} and the almost sharp decay estimates~\eqref{est:almost1}, we find 
\begin{equation*}
\begin{aligned}
\left|V\left[\left(\frac{v}{r}\right)^{2}\left(\frac{\Phi}{\ln v}\right)\left(\pt \Psi\right)^{2}\right]\right|
\lesssim \ep v^{-1}(\ln^{-2}v)u^{-4}\ln^2u+\ep v^{-2}u^{-3}\ln^{2}v,
\end{aligned}
\end{equation*}
which directly implies
\begin{equation*}
\left|\int_{v}^{+\infty}\int_{v^{-1}}^{u}V\left[\left(\frac{v}{r}\right)^{2}\left(\frac{\Phi}{\ln v}\right)\left(\pt \Psi\right)^{2}\right]\left(\sigma,\nu\right)\d \sigma \d \nu\right|\lesssim \ep (\ln v)^{-1}\lesssim \ep u^{-\frac{\delta}{4}}.
\end{equation*}
Combining the above estimates, we complete the proof of \eqref{est:VPsiF} in this region.
\end{proof}

Next, we deduce the asymptotic behavior of $\bar{\psi}$ in the space-time region $\mathcal{C}_{\delta}$.

\begin{proposition}\label{prop:psi}
In the space-time region $\mathcal{C}_{\delta}$, we have 
    \begin{equation}\label{est:pointpsi:Cdeltaregion}
        \left|\bar{\psi}(u,v,\omega)-r^{-1}{\Psi}(u,+\infty,\omega)+\mathfrak{c}_{4}r^{-1}v^{-1}\ln v\right|\lesssim \ep r^{-1}v^{-1}u^{-\frac{\delta}{4}}\ln v.
    \end{equation}
\end{proposition}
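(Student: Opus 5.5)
We integrate the estimate \eqref{est:VPsiF} of Lemma~\ref{le:VPsiF} along the outgoing direction $V$, but from null infinity inward rather than from a curve in the interior. We fix $(u,\omega)$ with $(u,v)\in\mathcal{C}_\delta\subset\mathcal{C}_{\frac{\delta}{2}}$. Along the whole ray $\{u\}\times[v,+\infty)$ we stay in $\mathcal{C}_{\frac{\delta}{2}}$, because $r$ increases with $v$ at fixed $u$. So \eqref{est:VPsiF} applies along the entire ray.

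First, we justify that $\bar\Psi(u,+\infty,\omega)$ exists and equals $\Psi(u,+\infty,\omega)$. Since $|V\bar\Psi|\lesssim \ep v^{-2}\ln v$ is integrable in $v$, the limit $\bar\Psi(u,+\infty,\omega)$ exists. Moreover, $\bar\Psi-\Psi=\frac{1}{2}r\phi^2$, and by \eqref{est:almost1} this is $O(\ep^2 r v^{-2}\ln^2 v)\to 0$ as $v\to+\infty$. Hence $\Psi(u,+\infty,\omega)$ is well defined and coincides with $\bar\Psi(u,+\infty,\omega)$.

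Next, the Fundamental Theorem of Calculus gives
\begin{equation*}
\bar\Psi(u,v,\omega)=\Psi(u,+\infty,\omega)-\frac{1}{2}\int_v^{+\infty}V\bar\Psi(u,\nu,\omega)\,\d\nu .
\end{equation*}
We then insert \eqref{est:VPsiF}, written as $V\bar\Psi=2\mathfrak{c}_4\nu^{-2}\ln\nu+O(\ep \nu^{-2}u^{-\frac{\delta}{4}}\ln\nu)$. The main term is explicit:
\begin{equation*}
\int_v^{+\infty}\nu^{-2}\ln\nu\,\d\nu=v^{-1}(\ln v+1).
\end{equation*}
This yields $-\mathfrak{c}_4 v^{-1}\ln v-\mathfrak{c}_4v^{-1}$. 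The error term integrates to $O(\ep u^{-\frac{\delta}{4}}v^{-1}\ln v)$.

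It remains to absorb the leftover $\mathfrak{c}_4 v^{-1}$. By \eqref{eq:roughboundofthepreciseconstants}, $|\mathfrak{c}_4|\lesssim\ep^3$. In $\mathcal{C}_\delta$ we have $\ln v\gtrsim u^{\delta}$, so
\begin{equation*}
|\mathfrak{c}_4| v^{-1}\lesssim \ep v^{-1}u^{-\delta}\ln v\le \ep v^{-1}u^{-\frac{\delta}{4}}\ln v .
\end{equation*}
Dividing by $r$ then gives \eqref{est:pointpsi:Cdeltaregion}.

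The only delicate point is the first step: verifying that the ray stays inside $\mathcal{C}_{\frac{\delta}{2}}$, and that the limit at $\mathcal{I}$ of $\bar\Psi$ equals that of $\Psi$, so that the stated leading term $r^{-1}\Psi(u,+\infty,\omega)$ is the right one. Everything else is a direct integration.
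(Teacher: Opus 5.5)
Your proposal is correct and follows the paper's proof essentially verbatim. Like the paper, you integrate \eqref{est:VPsiF} inward from $\mathcal{I}$ at fixed $u$, and you identify $\bar\Psi(u,+\infty,\omega)=\Psi(u,+\infty,\omega)$ because $r\phi^2\to 0$ by the almost sharp decay. The paper absorbs the extra $-\mathfrak{c}_4 v^{-1}$ term from $\int_v^{+\infty}\nu^{-2}\ln\nu\,\d\nu$ silently, whereas you absorb it explicitly via $|\mathfrak{c}_4|\lesssim\ep^3$ and $\ln v\gtrsim u^{\delta}$ in $\mathcal{C}_\delta$, which is a correct and useful clarification.
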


\begin{proof}
Integrating~\eqref{est:VPsiF} from null infinity, we find 
\begin{align*}
\bar{\Psi}(u,v,\omega)={}&\bar{\Psi}(u,+\infty,\omega) -\frac{1}{2}\int_{v}^{+\infty} V\bar{\Psi} (u,\sigma,\omega) \d\sigma\notag\\
=&\bar{\Psi}(u,+\infty,\omega)- \frac{1}{2}\int_{v}^{+\infty} \Big(2\mathfrak{c}_{4}\sigma^{-2}\ln \sigma +O\left(\ep \sigma^{-2} u^{-\frac{\delta}{4}}\ln \sigma\right)\Big) \d\sigma\notag\\
=&\bar{\Psi}(u,+\infty,\omega) -\mathfrak{c}_{4} v^{-1}\ln v + 
O(\ep v^{-1}u^{-\frac{\de}{4}}\ln v).
\end{align*}
On the other hand, from Proposition~\ref{prop:existence}, we directly have 
\begin{equation*}
    \bar\Psi(u,+\infty,\omega)=\lim_{v\to +\infty}\left(\Psi(u,v,\omega)+\frac{r}{2}\phi^{2}(u,v,\omega)\right)=\Psi(u,+\infty,\omega).
\end{equation*}
Combining the above estimates, we complete the proof for~\eqref{est:pointpsi:Cdeltaregion} in this region.
\end{proof}

 \subsection{Precise decay for zeroth-order mode}\label{SS:0order}

In this section, we establish the precise decay for the zeroth-order mode of $(\phi,\psi,\bar{\psi})$ in the full space-time region. 

\smallskip
In view of ~\eqref{equ:PhiTPsi1}, for the zeroth-mode of $(\phi, \bar{\psi})$, 
we check that 
\begin{equation}\label{equ:PhiTPsi}
\left\{
\begin{aligned}
    UV\Phi_{\ell=0}&=\left[r^{-1}\left(\pt \Psi\right)^{2}\right]_{\ell=0},\\
    UV\bar{\Psi}_{\ell=0}&=\left[r^{-2}\Phi\left(\pt \Psi\right)^{2}\right]_{\ell=0}.  \end{aligned}
\right.
\end{equation} 

\subsubsection{Precise decay of $\phi_{\ell=0}$}
We now derive the precise decay for $\phi_{\ell=0}$ in the full region. We start with the precise decay of $V\Phi_{\ell=0}$ in the space-time region $\mathcal{C}_{{\rm{ext}},2\delta}$.

\begin{lemma}
In the space-time region $\mathcal{C}_{{\rm{ext}},2\delta}$, we have 
\begin{equation}\label{est:VPhi}
\left|V\Phi_{\ell=0}-2\mathfrak{c}_{1}v^{-1}\right|\lesssim \ep v^{-1}u^{-\delta}.
\end{equation}
\end{lemma}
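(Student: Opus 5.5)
The plan is to copy the proof of Lemma~\ref{lem:preciseofVPhiinRegionII}, now for the spherical mean. Projecting onto the zeroth mode removes the angular Laplacian term, which was the only term that forced us into the far region $\mathcal{C}_{\frac{\delta}{2}}$. By the first line of \eqref{equ:PhiTPsi},
\begin{equation*}
U\left(vV\Phi_{\ell=0}\right)=\left[\frac{v}{r}\left(\pt \Psi\right)^{2}\right]_{\ell=0},
\end{equation*}
since $Uv=0$. We integrate this in $u$ from the initial point $(v^{-1},v)\in\mathcal{H}_1$ up to $(u,v)$. Then we push the integral of the source along constant $v$ to null infinity by the Fundamental Theorem of Calculus, exactly as in \eqref{eq:RegionII:VPhi:integralalongU}. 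Using $v/r\to 2$ as $v\to+\infty$ and the definition of $\mathfrak{c}_1$, this gives
\begin{equation*}
\begin{aligned}
\left(vV\Phi_{\ell=0}\right)(u,v)={}&\left(vV\Phi_{\ell=0}\right)(v^{-1},v)+2\mathfrak{c}_{1}
-\int_{u}^{+\infty}\left((\pt \Psi)^{2}\right)_{\ell=0}(\sigma,+\infty)\d \sigma
-\int_{0}^{v^{-1}}\left((\pt \Psi)^{2}\right)_{\ell=0}(\sigma,+\infty)\d \sigma\\
&-\frac{1}{4}\int_{v}^{+\infty}\int_{v^{-1}}^{u}\left(V\left[\frac{v}{r}\left(\pt \Psi\right)^{2}\right]\right)_{\ell=0}(\sigma,\nu)\d \sigma \d \nu .
\end{aligned}
\end{equation*}

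Each error term is bounded as in the earlier lemma. The initial-data term is $O(\ep v^{-\delta})$ by \eqref{est:smallness1}, exactly as in \eqref{eq:RegionII:VPhi:initialdatacontrol}. The tail $\int_u^{\infty}$ is $O(\ep^2u^{-3}\ln^2u)$ by \eqref{est:almost1}. The short piece $\int_0^{v^{-1}}$ is $O(\ep^2v^{-1})$. The spacetime integral is handled with the computation of $V[\frac vr(\pt\Psi)^2]$ already done in Lemma~\ref{lem:preciseofVPhiinRegionII}: averaging over the sphere commutes with $V$ and with the pointwise bounds, so the integrand is $\lesssim \ep^2v^{-2}u^{-3}\ln^4v\ln^2u$. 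Its double integral is therefore $O(\ep^2v^{-1}\ln^4 v)$. Here the restriction to $\mathcal{C}_{\rm ext,2\delta}$ is not actually needed, so the argument even works globally.

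The only remaining issue is converting $v^{-\delta}$, $v^{-1}\ln^4v$ and $u^{-3}\ln^2u$ into $u^{-\delta}$. Since $v\ge u$, we have $v^{-\delta}\le u^{-\delta}$ and $v^{-1}\ln^4 v\lesssim u^{-\delta}$, so all error terms are $O(\ep u^{-\delta})$ after dividing by $v$.

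The one step I expect to need care is the initial-data term, which requires $|vV\Phi(v^{-1},v)|\lesssim \ep v^{-\delta}$ uniformly. On $\mathcal{H}_1$ the point $(v^{-1},v)$ has $r\simeq v/2$. Writing $vV\Phi=v\phi+vrV\phi$, this follows from the weighted bound \eqref{est:initsmall} (which gives $\langle r\rangle^{-5/2}$ decay of the relevant derivatives). Hence the term is in fact $O(\ep v^{-1/2})$, which is more than enough. Everything else is a literal repetition of the Region~II argument, with the $\Deltas\Phi$ term absent.
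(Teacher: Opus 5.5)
Your proposal is correct and follows the paper's proof essentially verbatim: the same identity for $U(vV\Phi_{\ell=0})$, the same push of the source to $\mathcal{I}$, and the same four error terms. The one step to make explicit is that the Region~II bound on $V[\frac{v}{r}(\pt\Psi)^2]$ was derived in $\mathcal{C}_{\frac{\delta}{2}}$, where $r\sim v$; outside that region, write the angular term as $\frac{v}{r}(\pt\psi)\Deltas\psi$ with $\Deltas\psi=\sum\Omega(\Omega\psi)$ and apply $|r^{-1}\Omega g|\lesssim v^{-1}|\Gamma g|$ from \eqref{est:Omega} to $g=\Omega\psi$. This does give the $\ep^2v^{-2}u^{-3}$ bound up to logarithms without using $\mathcal{C}_{{\rm{ext}},2\delta}$, so your global remark holds, whereas the paper stays in that region and accepts a harmless $v^{2\delta}$ loss.
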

\begin{proof}
Multiplying both sides of the first line of~\eqref{equ:PhiTPsi} by $v$, we have 
\begin{equation*}
U\left(vV\Phi_{\ell=0}\right)=\left[\frac{v}{r}\left(\pt \Psi\right)^{2}\right]_{\ell=0}.
\end{equation*}
Integrating this equation from $(u_{0}(v),v)$ where $u_{0}(v)$ is defined such that 
\begin{equation*}
    (u_{0}(v),v,\omega)\in \mathcal{H}_{1}\Longrightarrow u_{0}(v)=v^{-1},
\end{equation*}
we deduce that
\begin{equation*}
\begin{aligned}
&\left(vV\Phi_{\ell=0}\right)(u,v)-\left(vV\Phi_{\ell=0}\right)(v^{-1},v)\\
&=2\mathfrak{c}_{1}-\frac{1}{4}\int_{v}^{+\infty}\int_{v^{-1}}^{u}V\left[\frac{v}{r}\left(\pt \Psi\right)^{2}\right]_{\ell=0}\left(\sigma,\nu\right)\d \sigma \d \nu\\
&-\int_{u}^{+\infty}\left[\left(\pt \Psi\right)^{2}\right]_{\ell=0}(\sigma,+\infty)\d \sigma-\int_{0}^{v^{-1}}\left[\left(\pt \Psi\right)^{2}\right]_{\ell=0}(\sigma,+\infty)\d \sigma.
\end{aligned}
\end{equation*}
Here, we pushed the integral along constant $v$ to null infinity using the Fundamental Theorem of Calculus.

\smallskip
First, by the smallness condition~\eqref{est:smallness1} of the initial data $(\phi_{0}, \phi_1, \psi_{0}, \psi_1)$, we have
\begin{equation*}
  \left|  \left(vV\Phi_{\ell=0}\right)(v^{-1},v)\right|\lesssim
  \ep v^{-\delta}.
\end{equation*}
Second, using again the almost sharp decay estimate~\eqref{est:almost1}, we deduce
\begin{align*}
\left|\int_{0}^{v^{-1}}\left[\left(\pt \Psi\right)^{2}\right]_{\ell=0}(\sigma,+\infty)\d \sigma   \right|&\lesssim \|\pt \Psi\|_{L^{\infty}}^{2}v^{-1}
\lesssim \ep^2 v^{-1},\\
\left|\int_{u}^{+\infty}\left[\left(\pt \Psi\right)^{2}\right]_{\ell=0}(\sigma,+\infty)\d \sigma\right|
&\lesssim \ep^{2}\int_{u}^{+\infty}(\sigma^{-4}\log^{2}\sigma)\d \sigma\lesssim \ep^2 u^{-3}\log^{2}u.
\end{align*}
Next, by an elementary computation, we find
\begin{equation*}
V\left(\frac{v}{r}\left(\pt \Psi\right)^{2}\right)
=-\frac{u}{r^{2}}\left(\pt \Psi\right)^{2}
+\frac{v}{r}\left(\pt \Psi\right)\left(UV\Psi\right)
+\frac{v}{r}\left(\pt \Psi\right)\left(VV\Psi\right).
\end{equation*}
It follows from~\eqref{est:Omega},~\eqref{equ:PhiPsi:waveUV} and the almost sharp decay estimates~\eqref{est:almost1} that
\begin{equation*}
\begin{aligned}
\left|\left[V\left(\frac{v}{r}\left(\pt \Psi\right)^{2}\right)\right]_{\ell=0}(u,v)\right|
&\lesssim \left|\pt \Psi\right|\left(\frac{u}{r^{2}}\left|\pt \Psi\right|+\frac{v}{r}\left|VV\Psi\right|\right)\\
&+\left|\pt \Psi\right|\left(v\left|Q_{0}(\phi,\phi)\right|+\frac{v}{r^{3}}\left|\Deltas \Psi\right|\right)\lesssim \ep^2 v^{-2+2\delta}u^{-2}\log^{2}u.
\end{aligned}
\end{equation*}
Integrating the above estimate over $ [v^{-1},u]\times
[v,+\infty)$, we obtain
\begin{equation*}
\left|\int_{v}^{+\infty}\int_{v^{-1}}^{u}V\left[\frac{v}{r}\left(\pt \Psi\right)^{2}\right]_{\ell=0}\left(\sigma,\nu\right)\d \sigma \d \nu\right|\lesssim \ep v^{-1+2\delta}.
\end{equation*}
Combining the above estimates, we complete the proof of~\eqref{est:VPhi} in this region.
\end{proof}

We now use the above precise decay estimate for $V\Phi_{\ell=0}$ to derive the precise decay for $\phi_{\ell=0}$ globally in the full space-time region.

\begin{proposition}
[Precise decay for $\phi_{\ell=0}$]\label{Prop:pointphi0}
It holds
\begin{equation}\label{est:pointphi0}
\left|\phi_{\ell=0}(u,v)-\mathfrak{c}_{1}\phi_{L}(u,v)\right|\lesssim \ep u^{-\frac{\delta}{2}}{\phi_{L}}(u,v).
\end{equation}
\end{proposition}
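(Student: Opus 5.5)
The plan is to split the region $u\ge 2$ into the exterior part $\mathcal{C}_{\rm{ext},2\delta}$, where the preceding lemma gives the precise bound \eqref{est:VPhi} for $V\Phi_{\ell=0}$, and the interior part $\mathcal{C}_{\rm{int},2\delta}=\{r\le \frac12 u^{1-2\delta}\}$, where we exploit the regularity of $\Phi_{\ell=0}$ at the axis $r=0$.

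\textbf{Exterior region.} For $(u,v)\in\mathcal{C}_{\rm{ext},2\delta}$, I would integrate \eqref{est:VPhi} along the outgoing direction (constant $u$) from the point $(u,\gamma(u))$ on the curve $r=\frac12 u^{1-2\delta}$, that is, $\gamma(u)=u+u^{1-2\delta}$. Since $V=2\partial_v$, this gives
\begin{equation*}
\Phi_{\ell=0}(u,v)=\Phi_{\ell=0}(u,\gamma(u))+\big(\mathfrak{c}_1+O(\ep u^{-\delta})\big)\big(\ln v-\ln\gamma(u)\big).
\end{equation*}
Now $\ln\gamma(u)-\ln u=\ln(1+u^{-2\delta})\lesssim u^{-2\delta}$. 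For the boundary term, \eqref{est:almost1} gives $|\Phi_{\ell=0}(u,\gamma(u))|\lesssim \ep r v^{-1}\ln v\lesssim \ep u^{-2\delta}\ln u$. We then need to compare these errors with $r\phi_L=\ln v-\ln u=\ln(1+2r/u)$, which satisfies $r\phi_L\gtrsim \min(r/u,1)\cdot\max(1,\ln(v/u))$. On $\mathcal{C}_{\rm{ext},2\delta}$ we have $r/u\ge \frac12 u^{-2\delta}$, so the naive error $\ep u^{-2\delta}\ln u$ is only comparable to $\ep\,\ln u\cdot r\phi_L$ near the boundary curve. This is too weak, and it is the main obstacle.

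\textbf{Handling the boundary.} To fix this, I would either place the matching curve at $r\simeq u^{1-\delta/4}$ and accept the loss, or, better, avoid the boundary term altogether. For the latter, I would bound $\Phi_{\ell=0}(u,\gamma(u))$ by integrating $V\Phi_{\ell=0}$ from $r=0$, where $\Phi_{\ell=0}=0$, using the interior estimate developed below. This yields $|\Phi_{\ell=0}(u,\gamma(u))-\mathfrak{c}_1 r\phi_L(u,\gamma(u))|\lesssim \ep u^{-\delta/2}r\phi_L$, with the error relative to $r\phi_L$.

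\textbf{Interior region.} For $r\le \frac12u^{1-2\delta}$, I would use the spherical-mean representation directly. Since $\Phi_{\ell=0}(u,u)=0$, integrating $V\Phi_{\ell=0}$ from the axis gives $\Phi_{\ell=0}(u,v)=\frac12\int_u^v V\Phi_{\ell=0}(u,\nu)\,\d\nu$. It then remains to show $V\Phi_{\ell=0}=2\mathfrak{c}_1 v^{-1}+O(\ep v^{-1}u^{-\delta/2})$ throughout the interior. For this I would integrate the equation $U(vV\Phi_{\ell=0})=[\frac{v}{r}(\pt\Psi)^2]_{\ell=0}$ along constant $v$ starting from $\mathcal{H}_1$, exactly as in the proof of the preceding lemma. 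The extra inner part of the $\sigma$-integral, over $\sigma\in[v-u^{1-2\delta},u]$, has integrand $\frac{v}{r}(\pt\Psi)^2=vr(\pt\psi)^2\lesssim \ep^2 v r v^{-2}u^{-4+}$. This is tiny, and the singular factor $1/r$ is harmless because $(\pt\Psi)^2/r=r(\pt\psi)^2$. The spacetime error term $V[\cdot]$ is handled the same way, with the $\Deltas$ contributions removed because we work with $\ell=0$.

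Combining these, we get $\Phi_{\ell=0}=\mathfrak{c}_1(\ln v-\ln u)+O(\ep u^{-\delta/2})(\ln v-\ln u)$ in both regions. Dividing by $r$ gives \eqref{est:pointphi0}, using \eqref{eq:roughboundofthepreciseconstants} to absorb the $\mathfrak{c}_1 u^{-\delta}$-type cross errors. The delicate point throughout is keeping every error proportional to $\ln(v/u)\approx 2r/u$ when $r\ll u$. This is why we integrate from $r=0$ rather than from an interior curve.
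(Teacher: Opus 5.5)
Your proof is correct, but it handles the obstacle you flagged differently from the paper.

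\textbf{The paper's route.} The paper does not integrate from the axis.

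In Step 1 it still matches on the curve $r=\frac12u^{1-2\delta}$ and bounds $(r\phi_{\ell=0})(u,\gamma(u))$ crudely by \eqref{est:almost1}. However, it claims the estimate only on the smaller region $\mathcal{C}_{\rm{ext},\delta}$. There $\ln v-\ln u\gtrsim u^{-\delta}$, so the boundary error $\ep u^{-2\delta}\ln u$ has relative size $\ep u^{-\delta}\ln u\lesssim \ep u^{-\delta/2}$. The gap between $\delta$ and $2\delta$ absorbs the loss you identified.

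In Step 2, on $\mathcal{C}_{\rm{int},\delta}$, it works with $\phi_{\ell=0}$ rather than $\Phi_{\ell=0}$. There $\phi_L=2u^{-1}+O(u^{-1-\delta})$ is not small, so an absolute error $\ep u^{-1-\delta/2}$ suffices. It takes the value on $r=\frac12 u^{1-\delta}$ from Step 1 and integrates the crude bound $|V\phi_{\ell=0}|\lesssim \ep v^{-2}\ln v$ over a $v$-interval of length $u^{1-\delta}$. This costs only $\ep u^{-1-\delta}\ln u$.

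\textbf{Your route.} You extend \eqref{est:VPhi} into $\{r\le \frac12 u^{1-2\delta}\}$. The cleanest way is to apply the lemma at $(v-u^{1-2\delta},v)$, which lies in $\mathcal{C}_{\rm{ext},2\delta}$ because its $u$-coordinate is at most $u$. Then integrate $U(vV\Phi_{\ell=0})=[vr(\pt\psi)^2]_{\ell=0}$ over the remaining segment of length at most $u^{1-2\delta}$, where the source contributes $O(\ep^2u^{-3})$.

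After that, $\Phi_{\ell=0}(u,v)=\frac12\int_u^v V\Phi_{\ell=0}(u,\nu)\,\d\nu$ makes the error automatically proportional to $\ln(v/u)$. No matching curve or case split is needed, and the argument would even give relative error $\ep u^{-\delta}$.

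\textbf{Trade-off.} The paper's version needs no sharp analysis near $r=0$ and uses only \eqref{est:almost1} there. The price is two matching curves and the weaker rate $u^{-\delta/2}$.

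\textbf{One small correction.} The $\ell=0$ projection removes $\Deltas\Phi$ from the equation for $\Phi_{\ell=0}$. It does not remove the term $\frac{v}{r}(\pt\Psi)\,r^{-2}\Deltas\Psi$ inside $V\big[\frac{v}{r}(\pt\Psi)^2\big]_{\ell=0}$, whose spherical mean is generally nonzero. This is harmless, since the lemma already estimates that term, provided you do not push the inner segment $[v-u^{1-2\delta},u]$ to $\mathcal{I}$ but bound it directly as above.
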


\begin{proof}
We split the proof for the desired estimate \eqref{est:pointphi0} into two parts concerning the space-time regions $\mathcal{C}_{{\rm{ext}},\delta}$ and $\mathcal{C}_{{\rm{int}},\delta}$, respectively. Also, it suffices to consider the case in which $u$ is sufficiently large.

\smallskip
\textbf{Step 1.} Estimate in $\mathcal{C}_{{\rm{ext}},\delta}$.
Integrating ~\eqref{est:VPhi} from $(u,\gamma(u))$ where $\gamma(u)$ is defined such that 
\begin{equation*}
    (u,\gamma(u))\in \big\{r=\frac{1}{2}u^{1-2\delta}\big\}\Longrightarrow \gamma(u)=u\left(1+u^{-2\delta}\right),
\end{equation*}
we deduce that 
\begin{equation}
\lab{eq:rphiell=0:pre}
\begin{aligned}
(r\phi_{\ell=0})(u,v)-(r\phi_{\ell=0})(u,\gamma(u))&=({\mathfrak{c}}_{1}+O(\ep u^{-\delta}))(\ln v-\ln u)\\
&+({\mathfrak{c}}_{1}+O(\ep u^{-\delta}))(\ln u-\ln \gamma(u)).
\end{aligned}
\end{equation}
Note that, using again the almost sharp decay estimates~\eqref{est:almost1}, we have
\begin{equation*}
\begin{aligned}
\left|\frac{\ln u-\ln \gamma(u)}{\ln v-\ln u}\right|&\lesssim  \frac{\ep u^{-2\delta}}{\ln v-\ln u}\lesssim \ep u^{-\delta},\\
\left|\frac{(r\phi_{\ell=0})(u,\gamma(u))}{\ln v-\ln u}\right|
&\lesssim \frac{\ep u^{-2\delta}\ln u}{\ln v-\ln u}\lesssim \ep u^{-\delta}\ln u.
\end{aligned}
\end{equation*}
Combining the above estimates with \eqref{eq:rphiell=0:pre}, and then using~\eqref{eq:roughboundofthepreciseconstants}, we complete the proof of~\eqref{est:pointphi0} in this region.

\smallskip
\textbf{Step 2.} Estimate in $\mathcal{C}_{{\rm{int}},\delta}$.
By the Fundamental Theorem of Calculus, for any $(u,v)\in \mathcal{C}_{{\rm{int}},\delta}$, we compute
\begin{equation*}
\phi_{\ell=0}(u,v)=\phi_{\ell=0}(u,\gamma_{1}(u))+\frac{1}{2}\int_{\gamma_{1}(u)}^{v}V\phi_{\ell=0}(u,\nu)\d \nu.
\end{equation*}
Here, $(u,\gamma_{1}(u))$ is made such that it lies in the curve $r=\frac{1}{2}u^{1-\delta}$, that is,
\begin{equation*}
\gamma_{1}(u)=u+u^{1-\delta}\Longrightarrow \gamma_{1}(u)=u\left(1+u^{-\delta}\right).
\end{equation*}
Using~\eqref{eq:roughboundofthepreciseconstants} and the proven estimate in Step 1, we check that
\begin{equation*}
\begin{aligned}
\phi_{\ell=0}(u,\gamma_{1}(u))
&=\left(2{\mathfrak{c}}_{1}+O(\ep u^{-\frac{\delta}{2}})\right)
\left(\gamma_{1}(u)-u\right)^{-1}\left(\ln \gamma_{1}(u)-\ln u\right)\\
&=2{\mathfrak{c}}_{1}
u^{-1}+O(\ep u^{-1-\frac{\delta}{2}}+|\mathfrak{c}_1|u^{-1-\delta})
=2{\mathfrak{c}}_{1}
u^{-1}+O(\ep u^{-1-\frac{\delta}{2}}).
\end{aligned}
\end{equation*}
Then, the almost sharp decay estimates~\eqref{est:almost1} and $(u,v)\in \mathcal{C}_{\rm{int},\delta}$ imply that
\begin{equation*}
\left|V\phi_{\ell=0}(u,v)\right|
\lesssim \ep v^{-2}\ln v\Longrightarrow
\left|\int_{\gamma_{1}(u)}^{v}V\phi_{\ell=0}(u,\nu)\d \nu\right|
\lesssim \ep u^{-1-\delta}
\ln u.
\end{equation*}
On the other hand, for any $(u,v)\in \mathcal{C}_{{\rm{int}},\delta}$ with $u$ large enough, we have 
\begin{equation*}
\phi_{L}=r^{-1}\ln \left(1+\frac{2r}{u}\right)=2u^{-1}
+O(u^{-1-\delta}).
\end{equation*}
Combining the above estimates, we thus complete the proof of 
~\eqref{est:pointphi0} in 
$\mathcal{C}_{{\rm{int}},\delta}$.
\end{proof}


\subsubsection{Precise decay of $\bar{\psi}_{\ell=0}$}


In this section, we derive the precise decay for $\bar{\psi}_{\ell=0}$. 
We start with the precise decay for $V\bar{\Psi}_{\ell=0}$ in the space-time region $\mathcal{C}_{\rm{ext},2\delta}$. 

\begin{lemma}\label{le:VPsi}
In the space-time region $\mathcal{C}_{\rm{ext},2\delta}$, we have 
\begin{equation}\label{est:VPsi}
\left|V\bar{\Psi}_{\ell=0}-
2\mathfrak{c}_{2}v^{-2}\ln v\right|\lesssim 
\frac{\ep \ln v}{v^{2}\ln u}.
\end{equation}
\end{lemma}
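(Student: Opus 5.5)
The plan mirrors Lemma~\ref{le:VPsiF}, with $\ell=0$ projections throughout so that no $\Deltas$ term appears. From the second line of \eqref{equ:PhiTPsi} we have
\begin{equation*}
U\Big(\frac{v^{2}}{\ln v}V\bar{\Psi}_{\ell=0}\Big)=\Big[\Big(\frac{v}{r}\Big)^{2}\Big(\frac{\Phi}{\ln v}\Big)(\pt\Psi)^{2}\Big]_{\ell=0}.
\end{equation*}
We integrate in $u$ from the initial point $(v^{-1},v)$ to $(u,v)$. As in Lemma~\ref{le:VPsiF}, we then push the $\sigma$-integral at fixed $v$ out to null infinity with the Fundamental Theorem of Calculus, writing the integrand at $v$ as its value at $v=+\infty$ minus a $V$-derivative integrated over $[v,+\infty)$. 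Since $v/r\to2$ and $\Phi/\ln v\to r\phi/\ln v$ at $\mathcal{I}$, the value at infinity integrated over $[0,+\infty)$ is exactly $2\mathfrak{c}_{2}$. This yields the decomposition
\begin{equation*}
\Big(\frac{v^{2}}{\ln v}V\bar{\Psi}_{\ell=0}\Big)(u,v)=2\mathfrak{c}_{2}+\mathrm{(data)}-\mathrm{(tail\ on\ }[u,\infty))-\mathrm{(tail\ on\ }[0,v^{-1}])-\tfrac14\iint V[\cdots]_{\ell=0}.
\end{equation*}

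The data term is bounded by $\ep v^{-\delta}$ using \eqref{est:smallness1}, exactly as before. The two tails at $\mathcal{I}$ are bounded by $\ep u^{-3+\delta}$ and $\ep v^{-1}$ using \eqref{est:almost1}. In $\mathcal{C}_{\rm{ext},2\delta}$ we have $v\gtrsim u$, so these bounds are $\lesssim \ep(\ln u)^{-1}$. Dividing by $v^{2}/\ln v$ then gives the target error $\ep\ln v/(v^{2}\ln u)$.

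The main obstacle is the double integral of
\begin{equation*}
V\Big[\Big(\frac{v}{r}\Big)^{2}\Big(\frac{\Phi}{\ln v}\Big)(\pt\Psi)^{2}\Big]
\end{equation*}
over $[v^{-1},u]\times[v,+\infty)$, because it must now be $O(\ep/\ln u)$ rather than $O(\ep/\ln v)$. Expanding this derivative as in Lemma~\ref{le:VPsiF} produces three kinds of terms. The first is $\frac{uv}{r^{3}}\frac{\Phi}{\ln v}(\pt\Psi)^{2}$. The second is $(v/r)^{2}V(\Phi/\ln v)(\pt\Psi)^{2}$, which we bound by Lemma~\ref{le:derivativePhiPsi}; that lemma is valid precisely on $\mathcal{C}_{\rm{ext},2\delta}$ and gives the factor $\ep\ln^{2}u/(\nu\ln^{2}\nu)$. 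The third consists of the terms involving $r^{-2}\Deltas\Psi$, $rQ_{0}(\phi,\phi)$ and $VV\Psi$, which we bound by \eqref{est:almost1}. In the second kind, $(\pt\Psi)^{2}\lesssim\ep^{2}\sigma^{-4}\ln^{2}\sigma$ is integrable in $\sigma$, and the resulting $\nu$-integral is $\int_{v}^{\infty}\nu^{-1}\ln^{-2}\nu\,\d\nu=(\ln v)^{-1}\le(\ln u)^{-1}$. The remaining terms carry at least $\nu^{-2}$ and so give $\ep v^{-1}\ln^{C}v$, which is acceptable.

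One delicate point is the region $r\lesssim u$, that is, $v\sim u$. There $r^{-1}$ factors such as $uv/r^{3}$ appear, and the $\sigma$-integration near $\sigma\approx u$ must be examined. I will use the decay of $(\pt\Psi)^{2}\lesssim\ep^{2}\sigma^{-4}\ln^{2}\sigma$ together with $r\ge\frac12u^{1-2\delta}$ to absorb these factors, possibly at the cost of a $u^{C\delta}$ loss. Such a loss is harmless against the extra powers of $u$ available. Collecting all the bounds and dividing by $v^{2}/\ln v$ gives \eqref{est:VPsi}.
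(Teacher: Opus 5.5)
Your proposal is correct and follows essentially the same route as the paper. It uses the same identity for $U\big(\frac{v^{2}}{\ln v}V\bar{\Psi}_{\ell=0}\big)$, integrated from $\mathcal{H}_1$ and pushed to $\mathcal{I}$ to produce $2\mathfrak{c}_{2}$, the same bounds on the data and tail terms, and the same control of the double integral via Lemma~\ref{le:derivativePhiPsi}, yielding $\ep(\ln v)^{-1}\le \ep(\ln u)^{-1}$. The region $r\lesssim u$ is less delicate than you expect: writing $\Phi=r\phi$ and $\pt\Psi=r\pt\psi$, and using \eqref{est:Omega} for $r^{-2}\Deltas\Psi$, cancels all negative powers of $r$, so no $u^{C\delta}$ loss actually arises.
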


\begin{proof}
Multiplying both sides of the first line of~\eqref{equ:PhiTPsi} by $(v^{2}/\ln v)$, we have 
\begin{equation*}
U\left(\frac{v^{2}}{\ln v}V\bar{\Psi}_{\ell=0}\right)=\left[\left(\frac{v}{r}\right)^{2}\left(\frac{\Phi}{\ln v}\right)\left(\pt \Psi\right)^{2}\right]_{\ell=0}.
\end{equation*}
Integrating this equation from $(u_{0}(v),v)$ where $u_{0}(v)$ is defined such that 
\begin{equation*}
    (u_{0}(v),v,\omega)\in \mathcal{H}_{1}\Longrightarrow u_{0}(v)=v^{-1},
\end{equation*}
we deduce that
\begin{equation*}
\begin{aligned}
&\left(\frac{v^{2}}{\ln v}V\bar{\Psi}_{\ell=0}\right)(u,v)-\left(\frac{v^{2}}{\ln v}V\bar{\Psi}_{\ell=0}\right)(v^{-1},v)\\
&=2\mathfrak{c}_{2}-\frac{1}{4}\int_{v}^{+\infty}\int_{v^{-1}}^{u}V\left[\left(\frac{v}{r}\right)^{2}\left(\frac{\Phi}{\ln v}\right)\left(\pt \Psi\right)^{2}\right]_{\ell=0}\left(\sigma,\nu\right)\d \sigma \d \nu\\
&-2\int_{u}^{+\infty}\left[\left(\frac{\Phi}{\ln v}\right)\left(\pt \Psi\right)^{2}\right]_{\ell=0}(\sigma,+\infty)\d \sigma-
2\int_{0}^{v^{-1}}\left[\left(\frac{\Phi}{\ln v}\right)\left(\pt \Psi\right)^{2}\right]_{\ell=0}(\sigma,+\infty)\d \sigma.
\end{aligned}
\end{equation*}
Here, we pushed the integral along constant $v$ to null infinity using the Fundamental Theorem of Calculus.

\smallskip
First, by the smallness condition~\eqref{est:smallness1} of initial data $(\phi_{0},\psi_{0})$, we have
\begin{equation*}
   \left| \left(\frac{v^{2}}{\ln v}V\bar{\Psi}_{\ell=0}\right)(v^{-1},v)\right|\lesssim \ep v^{-\delta}.
\end{equation*}
Second, using again the almost sharp decay estimate~\eqref{est:almost1}, we deduce
\begin{equation*}
\begin{aligned}
\left|\int_{0}^{v^{-1}}\left[\left(\frac{\Phi}{\ln v}\right)\left(\pt \Psi\right)^{2}\right]_{\ell=0}(\sigma,+\infty)\d \sigma   \right|
&\lesssim \left\|\left(\frac{\Phi}{\ln v}\right)\left(\pt \Psi\right)^{2}\right\|_{L^{\infty}}v^{-1}
\lesssim \ep^3 v^{-1},\\
\left|\int_{u}^{+\infty}\left[\left(\frac{\Phi}{\ln v}\right)\left(\pt \Psi\right)^{2}\right]_{\ell=0}(\sigma,+\infty)\d \sigma\right|
&\lesssim \ep^{3}\int_{u}^{+\infty}\left(\sigma^{-4}\log^{2}\sigma\right)\d \sigma\lesssim \ep^3 u^{-3}\log^{2}u.
\end{aligned}
\end{equation*}
Further, we have from ~\eqref{equ:PhiPsi:waveUV} that
\begin{equation*}
\begin{aligned}
V\left[\left(\frac{v}{r}\right)^{2}\left(\frac{\Phi}{\ln v}\right)\left(\pt \Psi\right)^{2}\right]
&=-2\frac{uv}{r^{3}}\left(\frac{\Phi}{\ln v}\right)\left(\pt \Psi\right)^{2}+\left(\frac{v}{r}\right)^{2}\left(V\left(\frac{\Phi}{\ln v}\right)\right)\left(\pt \Psi\right)^{2}\\
&+\left(\frac{v}{r}\right)^{2}\left(\frac{\Phi}{\ln v}\right)\left(\pt \Psi\right)
\left(r^{-2}\Deltas \Psi+rQ_{0}(\phi,\phi)+VV\Psi\right).
\end{aligned}
\end{equation*}
Hence, using the almost sharp decay estimates~\eqref{est:almost1} as well as 
Lemma~\ref{le:derivativePhiPsi}, we find
\begin{equation*}
\begin{aligned}
&\left|V\left[\left(\frac{v}{r}\right)^{2}\left(\frac{\Phi}{\ln v}\right)\left(\pt \Psi\right)^{2}\right](u,v)\right|\lesssim 
\ep^{3}\left(
v^{-1}u^{-4}(\ln^{-2}v)\ln^{6}u+v^{-2}u^{-3}\ln^{6}v
\right),
\end{aligned}
\end{equation*}
which yields, after integrating the above estimate over $[v^{-1},u]\times [v,+\infty)$, 
\begin{equation*}
\left|\int_{v}^{+\infty}\int_{v^{-1}}^{u}\left(V\left[\left(\frac{v}{r}\right)^{2}\left(\frac{\Phi}{\ln v}\right)\left(\pt \Psi\right)^{2}\right]\right)_{\ell=0}\left(\sigma,\nu\right)\d \sigma \d \nu\right|\lesssim \ep^3 (\ln v)^{-1}.
\end{equation*}
Combining the above estimates, we complete the proof of~\eqref{est:VPsi} in this region.
\end{proof}

We now use the above precise decay estimate for $V\bar{\Psi}_{\ell=0}$ to derive the precise decay for $\bar{\psi}_{\ell=0}$ globally in the full space-time region.

\begin{proposition}
[Precise decay for $\bar{\psi}_{\ell=0}$] \label{Prop:pointpsi0}
It holds in the whole space-time region that
\begin{equation}\label{est:pointpsi0}
\left|\bar{\psi}_{\ell=0}(u,v)-\mathfrak{c}_{2}\psi_{L}(u,v)\right|\lesssim  \ep(\ln u)^{-1}\psi_{L}(u,v).
\end{equation}
\end{proposition}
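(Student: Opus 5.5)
The proof will mirror that of Proposition~\ref{Prop:pointphi0}, with Lemma~\ref{le:VPsi} playing the role of \eqref{est:VPhi}. We split the spacetime into $\mathcal{C}_{{\rm{ext}},\delta}$ and $\mathcal{C}_{{\rm{int}},\delta}$ and take $u$ large; for bounded $u$ the estimate follows from the almost sharp bounds \eqref{est:almost2}. First we record the elementary asymptotics of $\psi_L$. We have
\[
r\psi_L=\frac{\ln u}{u}-\frac{\ln v}{v}=\frac12\int_u^v \frac{2(\ln\nu-1)}{\nu^{2}}\,\d\nu .
\]
Hence $r\psi_L\simeq (v-u)u^{-2}\ln u$ for $v\lesssim 2u$, and $r\psi_L\simeq u^{-1}\ln u$ for $v\ge 2u$. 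We also note that $V(r\psi_L)=2v^{-2}(\ln v-1)$. The term $-2v^{-2}$ is harmless, since relative to $v^{-2}\ln v$ it is $O((\ln u)^{-1})$ in the region considered.

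\textbf{Step 1 (exterior region $\mathcal{C}_{{\rm{ext}},\delta}$).} Take $\gamma(u)=u(1+u^{-2\delta})$, which lies on $r=\tfrac12u^{1-2\delta}$ and hence in $\mathcal{C}_{{\rm{ext}},2\delta}$. Integrate \eqref{est:VPsi} in $v$ from $\gamma(u)$ to $v$. This gives
\[
\bar\Psi_{\ell=0}(u,v)-\bar\Psi_{\ell=0}(u,\gamma(u))=\big(\mathfrak{c}_2+O(\ep(\ln u)^{-1})\big)\int_{\gamma(u)}^{v}\nu^{-2}\ln\nu\,\d\nu .
\]
Here we use that $\ln\nu\simeq\ln u$ on the part where $\nu\lesssim u^{C}$. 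On the part where $\nu\gg u^{C}$, the error density $\nu^{-2}\ln\nu/\ln u$ integrates to a quantity bounded by $\ep u^{-1}(\ln u)^{-1}\cdot\ln u$ times a factor. We therefore compare with $u^{-1}\ln u$ and still gain $(\ln u)^{-1}$, because $\int_u^\infty\nu^{-2}\ln\nu\,\d\nu/\ln u\simeq u^{-1}$.

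The integral $\int_{\gamma(u)}^v\nu^{-2}\ln\nu\,\d\nu$ equals $r\psi_L(u,v)-r\psi_L(u,\gamma(u))$ up to the lower-order $\nu^{-2}$ piece. The boundary term is controlled by \eqref{est:almost2}:
\[
|\bar\Psi_{\ell=0}(u,\gamma(u))|\lesssim \ep\, u^{1-2\delta}u^{-2}\ln u .
\]
In $\mathcal{C}_{{\rm{ext}},\delta}$ we have $r\psi_L\gtrsim u^{-1-\delta}\ln u$. So the boundary term, and likewise $r\psi_L(u,\gamma(u))$, is $O(u^{-\delta}r\psi_L)$. Dividing by $r$ and using \eqref{eq:roughboundofthepreciseconstants} then gives \eqref{est:pointpsi0} in this region.

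\textbf{Step 2 (interior region $\mathcal{C}_{{\rm{int}},\delta}$).} Here $r\le \tfrac12u^{1-\delta}$, so $\psi_L=2u^{-2}(\ln u-1)+O(u^{-2-\delta}\ln u)$. By Step 1 evaluated at $\gamma_1(u)=u+u^{1-\delta}$, we get
\[
\bar\psi_{\ell=0}(u,\gamma_1(u))=2\mathfrak{c}_2u^{-2}\ln u+O\big(\ep u^{-2}+|\mathfrak{c}_2|u^{-2}\big).
\]
We then integrate $V\bar\psi_{\ell=0}$ from $\gamma_1(u)$ to $v$. By \eqref{est:almost2}, $|V\bar\psi_{\ell=0}|\lesssim \ep v^{-2}u^{-1}\ln u$, and the $v$-interval has length at most $u^{1-\delta}$. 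The contribution is therefore $O(\ep u^{-2-\delta}\ln u)$, which is acceptable. Combining the two pieces, and using $|\mathfrak{c}_2|\lesssim\ep^3$ to absorb the $|\mathfrak{c}_2|u^{-2}$ term, gives the claimed bound with relative error $(\ln u)^{-1}$.

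\textbf{Main obstacle.} The delicate point is the bookkeeping in Step 1 for very large $v$ (for instance $v\ge e^{u}$). There the error in \eqref{est:VPsi} is only $(\ln u)^{-1}$ relative, whereas $\ln v$ is much larger than $\ln u$. The integrated error must therefore be compared carefully with $r\psi_L\simeq u^{-1}\ln u$, not with the size of the integrand. We would verify that
\[
\int_{\gamma}^{v}\frac{\ln\nu}{\nu^2\ln u}\,\d\nu\lesssim \frac{1}{\ln u}\cdot\frac{\ln u}{u},
\]
which is exactly the $(\ln u)^{-1}\psi_L$ bound. This explains why only a logarithmic gain, rather than $u^{-\delta/2}$, is claimed.
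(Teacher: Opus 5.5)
Your proposal is correct and follows the paper's proof essentially step by step. It uses the same split into $\mathcal{C}_{{\rm{ext}},\delta}$ and $\mathcal{C}_{{\rm{int}},\delta}$, integrates Lemma~\ref{le:VPsi} from $r=\frac12u^{1-2\delta}$ with the $\nu^{-2}$ piece absorbed into the $(\ln u)^{-1}$ error, dominates the boundary terms via $r\psi_L\gtrsim u^{-1-\delta}\ln u$, and integrates $V\bar\psi_{\ell=0}$ from $r=\frac12u^{1-\delta}$ in the interior. One remark: your ``main obstacle'' is not really an obstacle, because the error in \eqref{est:VPsi} carries the same weight $\nu^{-2}\ln\nu$ as the main term, so the factor $\big(\mathfrak{c}_2+O(\ep(\ln u)^{-1})\big)$ in front of $\int_{\gamma(u)}^{v}\nu^{-2}\ln\nu\,\d\nu$ holds for all $v$ without splitting by the size of $\nu$.
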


\begin{proof}
We split the proof into two parts, proving the desired estimates in the space-time regions $\mathcal{C}_{{\rm{ext}},\delta}$ and $\mathcal{C}_{{\rm{int}},\delta}$, respectively. Also, it suffices to consider the case in which $u$ is large enough.

\smallskip
\textbf{Step 1.} Estimate in $\mathcal{C}_{{\rm{ext}},\delta}$.
Integrating ~\eqref{est:VPsi} from $(u,\gamma(u))$ where $\gamma(u)$ is defined such that 
\begin{equation*}
    (u,\gamma(u))\in \big\{r=\frac{1}{2}u^{1-2\delta}\big\}\Longrightarrow \gamma(u)=u\left(1+u^{-2\delta}\right),
\end{equation*}
we deduce
\begin{equation}
\lab{rtildepsiprincipalterminCextde:step1}
\begin{aligned}
&(r\bar{\psi}_{\ell=0})(u,v)-(r\bar{\psi}_{\ell=0})(u,\gamma(u))\\
&=\left(\mathfrak{c}_{2}+O((\ep+|\mathfrak{c}_2|)(\ln u)^{-1})\right)\left(\frac{\ln u}{u}-\frac{\ln v}{v}\right)\\
&+\left(\mathfrak{c}_{2}+O((\ep+|\mathfrak{c}_2|)(\ln u)^{-1})\right)\left(\frac{\ln \gamma(u)}{\gamma(u)}-\frac{\ln u}{u}\right).
\end{aligned}
\end{equation}
Here, we have used the fact that 
\begin{equation*}
    \int_{\gamma(u)}^{v}\sigma^{-2}\d \sigma\lesssim \left(\ln u\right)^{-1}\int_{\gamma(u)}^{v}\sigma^{-2}\ln \sigma \d \sigma.
\end{equation*}
Then, the almost sharp decay estimates~\eqref{est:almost1} imply
\begin{equation}
\lab{esti:rtildepsianderrorterm}
    \left|r\bar{\psi}_{\ell=0}(u,\gamma(u))\right|\lesssim \ep u^{-1-2\delta}\ln^{2}u 
    \ \  \mbox{and}\ \  \left|\frac{\ln \gamma(u)}{\gamma(u)}-\frac{\ln u}{u}\right|\lesssim u^{-1-2\delta}\ln u.
\end{equation}
On the other hand, in view that 
\begin{equation*}
v-u\ge u^{1-\delta}\Longrightarrow 
v\ge u+u^{1-\delta}, \quad \mbox{for any} \ \ (u,v)\in\mathcal{C}_{{\rm{ext}},\delta},
\end{equation*}
and that $(\ln x/x)$ is a strictly decreasing function on $(1,+\infty)$, we obtain
\begin{equation*}
\begin{aligned}
\frac{\ln u}{u}-\frac{\ln v}{v}
&\ge \frac{\ln u}{u}-\frac{\ln (u+u^{1-\delta})}{u+u^{1-\delta}}\\
&\ge \left(\frac{\ln u}{u}-\frac{\ln u}{u+u^{1-\delta}}\right)
-\frac{\ln \left(1+u^{-\delta}\right)}{u+u^{1-\delta}}
\\
&\gtrsim u^{-1-\delta}\ln u-u^{-1-\delta}\gtrsim u^{-1-\delta}\ln u.
\end{aligned}
\end{equation*}
Combining the above estimate with \eqref{rtildepsiprincipalterminCextde:step1} and \eqref{esti:rtildepsianderrorterm},
 we complete the proof for \eqref{est:pointpsi0} in this space-time region.

\smallskip
\textbf{Step 2.} Estimate in $\mathcal{C}_{{\rm{int}},\delta}$.
By the Fundamental Theorem of Calculus, for any $(u,v)\in \mathcal{C}_{{\rm{int}},\delta}$, we compute
\begin{equation*}
\bar{\psi}_{\ell=0}(u,v)=\bar{\psi}_{\ell=0}(u,\gamma_{1}(u))+\frac{1}{2}\int_{\gamma_{1}(u)}^{v}
V\bar{\psi}_{\ell=0}(u,\nu)\d \nu.
\end{equation*}
Here, $(u,\gamma_{1}(u))$ is made such that it lies in the curve $r=\frac{1}{2}u^{1-\delta}$, that is,
\begin{equation*}
\gamma_{1}(u)=u+u^{1-\delta}\Longrightarrow \gamma_{1}(u)=u\left(1+u^{-\delta}\right).
\end{equation*}
Using~\eqref{eq:roughboundofthepreciseconstants} and the proven estimate in Step 1, we check that
\begin{equation*}
\begin{aligned}
    \bar{\psi}_{\ell=0}(u,\gamma_{1}(u))
    &=\left(2\mathfrak{c}_{2}+O((\ep+|\mathfrak{c}_{2}|)(\ln u)^{-1})\right)
    \left(u^{-2}\ln u+O(u^{-2})\right)\\
   & =2\mathfrak{c}_{2}u^{-2}\ln u+O\left((\ep+|\mathfrak{c}_{2}|)u^{-2}\right)=2\mathfrak{c}_{2}u^{-2}\ln u+O(\ep u^{-2}).
    \end{aligned}
\end{equation*}
Then, the almost sharp decay estimates~\eqref{est:almost1} and $(u,v)\in \mathcal{C}_{\rm{int},\delta}$ imply that
\begin{equation*}
    \left|V\bar{\psi}_{\ell=0}(u,v)\right|\lesssim \ep^2 v^{-2}u^{-1}\ln^{2}u\Longrightarrow
    \left|\int_{\gamma_{1}(u)}^{v}V\bar{\psi}_{\ell=0}(u,\nu)\d \nu\right|\lesssim \ep^{2} u^{-2-\delta}\ln^{2}u.
\end{equation*}
On the other hand, for any $(u,v)\in \mathcal{C}_{\rm{int},\delta}$ with $u$ large enough, we have 
\begin{equation*}
    \psi_{L}
    =r^{-1}\left(\frac{1}{u}-\frac{1}{v}\right)\ln u
    +r^{-1}v^{-1}\ln\left(1+\frac{2r}{u}\right)\\
 =
 2u^{-2}\ln u+O(u^{-2}).
\end{equation*}
Combining the above estimates, we thus complete the proof of 
~\eqref{est:pointphi0} in $\mathcal{C}_{{\rm{int}},\delta}$.
\end{proof}


 \subsection{Estimate in Region I} 
 \lab{subsect:precise:regionI}
 

In this space-time region, we expect that the leading order part of the asymptotic behavior for the solution $(\phi,\psi,\bar\psi)$ can be determined from the behavior for the zeroth-order mode $(\phi_{\ell=0},\bar\psi_{\ell=0})$. We show in the following lemma an upper bound of decay in this space-time region for $(\phi_{\ge 1},\psi_{\ge 1})$,  which turns out to decay faster than $(\phi_{\ell=0},\bar{\psi}_{\ell=0})$.

\begin{lemma}\label{le:region1}
    In Region $\mathrm{I}$, we have 
    \begin{subequations}
    \label{eq:esti:geq1modes:regionI}
    \begin{align}
    \label{eq:esti:geq1modes:regionI:phi}
        \|\phi_{\ge 1}(t,r,\omega)\|_{L^{\infty}(\mathbb{S}^{2})}&\lesssim \epsilon v^{-1-\frac{\delta}{2}},\\
        \label{eq:esti:geq1modes:regionI:psi}
    \|\psi_{\ge 1}(t,r,\omega)\|_{L^{\infty}(\mathbb{S}^{2})}&\lesssim \epsilon v^{-2-\frac{\delta}{2}}.
    \end{align}
    \end{subequations}
\end{lemma}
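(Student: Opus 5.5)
The plan is to exploit that, near the center $r=0$, the non-radial part of a function vanishes to first order in $r$. This lets us trade a factor $r\le \frac12 u^{1-\delta}$ for a derivative. The almost sharp decay estimates \eqref{est:almost1} give an extra factor $u^{-1}$ for each derivative $\partial$, so the product is a gain of $u^{-\delta}$. This beats the logarithmic losses, and Region $\mathrm{I}$ has $v\simeq u$.

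\textbf{Step 1: a Poincaré–Sobolev bound on the sphere.} Fix $(t,r)$ and view $f_{\ge 1}(t,r,\cdot)$ as a function on $\mathbb{S}^2$. It has zero spherical mean. Sobolev embedding $H^2(\mathbb{S}^2)\subset L^\infty(\mathbb{S}^2)$ together with the Poincaré inequality for mean-zero functions gives
\begin{equation*}
\|f_{\ge 1}(t,r,\cdot)\|_{L^\infty(\mathbb{S}^2)}\lesssim \sum_{1\le |I|\le 2}\|\Omega^{I}f(t,r,\cdot)\|_{L^2(\mathbb{S}^2)}\lesssim \sum_{|J|\le 1}\sup_{\mathbb{S}^2}|\Omega\,\Omega^{J}f|(t,r,\cdot).
\end{equation*}
Here we use that $\Omega$ kills spherical means and commutes with the projection onto $\ell\ge1$.

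\textbf{Step 2: trade $\Omega$ for $r\partial$.} Since $\Omega_{ab}=x_a\partial_b-x_b\partial_a$, we have $|\Omega g|\le 2r|\partial g|$ pointwise. With $g=\Omega^J f$, $|J|\le 1$, and $\Omega\in\Gamma$, this gives
\[
\|f_{\ge1}\|_{L^\infty(\mathbb{S}^2)}\lesssim r\sum_{|J|\le 1}\sup_{\mathbb{S}^2}|\partial\Gamma^J f|.
\]

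\textbf{Step 3: insert the almost sharp decay.} We need $N-4\ge 2$ derivatives, which holds since $N\ge6$. By Proposition \ref{prop:existence} (equivalently \eqref{est:almost1}):
\begin{itemize}
\item $|\partial\Gamma^J\phi|\lesssim \epsilon v^{-1}u^{-1}\ln v$;
\item $|\partial\Gamma^J\psi|\lesssim \epsilon v^{-1}u^{-2}\big(\ln u+(u/v)\ln^2 v\big)\lesssim \epsilon v^{-1}u^{-2}\ln^2 v$.
\end{itemize}
In Region $\mathrm{I}$ we have $r\le \frac12u^{1-\delta}$, hence $u\le v\le u+u^{1-\delta}\le 2u$, so $v\simeq u$ and $\ln v\simeq \ln u$. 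Therefore
\[
\|\phi_{\ge1}\|_{L^\infty(\mathbb{S}^2)}\lesssim \epsilon\, u^{1-\delta}v^{-1}u^{-1}\ln v\lesssim \epsilon v^{-1-\delta}\ln v\lesssim \epsilon v^{-1-\frac{\delta}{2}},
\]
\[
\|\psi_{\ge1}\|_{L^\infty(\mathbb{S}^2)}\lesssim \epsilon\, u^{1-\delta}v^{-1}u^{-2}\ln^2 v\lesssim \epsilon v^{-2-\delta}\ln^2v\lesssim \epsilon v^{-2-\frac{\delta}{2}}.
\]
The last step in each line absorbs the logarithms into $v^{-\delta/2}$.

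If $u$ is bounded (say $u\le 2$ near the initial hyperboloid), the claim follows directly from \eqref{est:almost1}, since then $v$ is also bounded in Region $\mathrm{I}$.

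I expect no serious obstacle. The only point needing care is Step 1: the sup-norm Poincaré inequality on $\mathbb{S}^2$ must be stated with rotation vector fields, so that the right-hand side involves only $\Omega$-derivatives. Those are exactly the derivatives that produce the vanishing factor $r$, and the derivative count must stay within the range of \eqref{est:almost1}.
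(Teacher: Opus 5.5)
Your proof is correct, but it takes a different route from the paper's in how the angular derivatives of $f_{\ge1}$ are controlled.

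\textbf{What is shared.} Both arguments use the elliptic estimate $\|f_{\ge1}\|_{H^2(\mathbb{S}^2)}\lesssim\|\Deltas f\|_{L^2(\mathbb{S}^2)}$ followed by Sobolev embedding on $\mathbb{S}^2$. Both then close with the almost sharp decay \eqref{est:almost1}.

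\textbf{How the paper bounds $\Deltas\phi_{\ge1}$.} It argues dynamically. It rewrites the wave equation as $\Deltas\phi_{\ge1}=r^2UV\phi_{\ge1}-2r\partial_r\phi_{\ge1}-r^2[(\partial_t\psi)^2]_{\ge1}$, and analogously for $\psi$ with $Q_0(\phi,\phi)$. It then estimates each term separately.

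\textbf{How you bound it.} You argue kinematically and never use the equation. Writing $\Deltas=\sum\Omega_{ab}^2$ and applying $|\Omega g|\le 2r|\partial g|$ to $g=\Omega^J f$ with $|J|\le1$, the lemma becomes a direct consequence of the first-derivative bounds $|\partial\Gamma^J\phi|$ and $|\partial\Gamma^J\psi|$ in Proposition \ref{prop:existence}. This needs only $|J|\le 1$, so $N-4\ge 1$ suffices rather than the $N-4\ge2$ you state.

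\textbf{Comparison.} The two routes yield exactly the same gain. In the paper, the term $-2r\partial_r\phi_{\ge1}$ (respectively $-2r\partial_r\psi_{\ge1}$) is the bottleneck and produces precisely your factor $r/u\lesssim u^{-\delta}$. The $r^2UV$ terms and the source terms are smaller by a further factor of order $r/v$ or better. So invoking the equation buys nothing extra for this lemma. Your version is shorter and shows the statement holds for any function with these commuted derivative bounds in Region $\mathrm{I}$, regardless of the dynamics.
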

\begin{proof}
Note that, in this space-time region, we have $u\lesssim v\lesssim u$ and $r\lesssim v^{1-\delta}$, which will be constantly used throughout this proof. In view of the wave system~\eqref{equ:main} and the alternative form \eqref{eq:alternativeformsofwaveeq} of wave equation, the system of equations of $(\phi_{\ge 1},\psi_{\ge 1})$ can be written as 
   \begin{equation*}
    \left\{
    \begin{aligned}
        \Deltas\phi_{\ge 1}&=r^{2}UV\phi_{\ge 1}-2r\partial_{r}\phi_{\ge 1}-r^{2}\big[\left(\pt \psi\right)^{2}\big]_{\ge 1},\\
        \Deltas\psi_{\ge 1}&=r^{2}UV\psi_{\ge 1}-2r\partial_{r}\psi_{\ge 1}-r^{2}\big[Q_{0}(\phi,\phi)\big]_{\ge 1}.
    \end{aligned}\right.
 \end{equation*}
 Based on the above identity and the almost sharp decay estimates \eqref{est:almost1}, and applying a standard elliptic estimate over $\mathbb{S}^2$, we deduce
 \begin{equation*}
 \begin{aligned}
      \|\phi_{\ge 1}(t,r,\omega)\|_{H^{2}(\mathbb{S}^{2})}
      &\lesssim \|\Deltas \phi_{\ge 1}(t,r,\omega)\|_{L^{2}(\mathbb{S}^{2})}\\
      &\lesssim \|r^{2}UV\phi\|_{L^{2}(\mathbb{S}^{2})}
      +\|r\partial_{r}\phi\|_{L^{2}(\mathbb{S}^{2})}
      +\|r^{2}\left(\pt \psi\right)^{2}\|_{L^{2}(\mathbb{S}^{2})}\\
      &
      \lesssim \epsilon 
      (v^{-1-2\delta}\ln^{2}v+v^{-1-\delta}\ln v+v^{-4}\ln^{4}v)
      \lesssim \epsilon v^{-1-\frac{\delta}{2}}.
      \end{aligned}
 \end{equation*}
Applying a standard Sobolev embedding theorem on sphere $\mathbb{S}^{2}$, we complete the proof for the desired estimate of $\phi_{\geq 1}$. 

\smallskip
Next, using a similar argument as above, we deduce that 
  \begin{equation*}
 \begin{aligned}
      \|\psi_{\ge 1}(t,r,\omega)\|_{H^{2}(\mathbb{S}^{2})}
      &\lesssim \|\Deltas \psi_{\ge 1}(t,r,\omega)\|_{L^{2}(\mathbb{S}^{2})}\\
      &\lesssim \|r^{2}UV\psi\|_{L^{2}(\mathbb{S}^{2})}
      +\|r\partial_{r}\psi\|_{L^{2}(\mathbb{S}^{2})}
      +\|r^{2}Q_{0}(\phi,\phi)\|_{L^{2}(\mathbb{S}^{2})}\\
      &
      \lesssim \epsilon (v^{-2-\delta}\ln^{2}v+v^{-2-2\delta}\ln^{2}v)\lesssim 
      \epsilon v^{-2-\frac{\delta}{2}}.
      \end{aligned}
 \end{equation*}
Applying a standard Sobolev embedding theorem on sphere $\mathbb{S}^{2}$, we complete the proof for the desired estimate of $\psi_{\geq 1}$. 
\end{proof}

Next, we make use of the above estimates for $(\phi_{\geq 1}, \psi_{\geq 1})$, together with the precise decay estimates for $(\phi_{\ell=0}, \bar{\psi}_{\ell=0})$ proven in Section~\ref{SS:0order}, to compute the precise decay for the solution $(\phi,\bar\psi)$ in Region $\textrm{I}$.

\begin{proposition}[Precise decay for $(\phi,\bar\psi)$ in Region $\textrm{I}$]
\lab{prop:precise:phi:RegionI}
In Region $\mathrm{I}$, we have
\begin{subequations}
\lab{esti:precise:phipsi:RegionI}
\begin{align}\label{esti:precise:phi:RegionI}
\left|\phi(u,v)-\mathfrak{c}_{1}\phi_{L}(u,v)\right|\lesssim& (\ep+|\mathfrak{c}_1|)u^{-\frac{\delta}{2}}\phi_{L}(u,v),\\
\label{esti:precise:psi:RegionI}
\left|\bar\psi(u,v)-\mathfrak{c}_{2}\psi_{L}(u,v)\right|\lesssim& (\ep+|\mathfrak{c}_2|)(\ln u)^{-1}\psi_{L}(u,v),
\end{align}
\end{subequations}
or in an alternative form,
\begin{subequations}
\lab{esti:precise:phipsi:RegionI:other}
\begin{align}\label{esti:precise:phi:RegionI:other}
\left|\phi(u,v)-2\mathfrak{c}_1v^{-1}\right|\lesssim& (\ep+|\mathfrak{c}_1|)v^{-1-\frac{\delta}{2}},\\
\label{esti:precise:psi:RegionI:other}
\left|\bar\psi(u,v)-2\mathfrak{c}_{2}v^{-2}\ln v\right|\lesssim& (\ep+|\mathfrak{c}_2|)v^{-2}.
\end{align}
\end{subequations}
\end{proposition}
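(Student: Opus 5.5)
The plan is to split each field into its zeroth mode and its higher modes, $\phi=\phi_{\ell=0}+\phi_{\ge 1}$ and $\bar\psi=\bar\psi_{\ell=0}+\bar\psi_{\ge 1}$. The zeroth modes are handled by Propositions \ref{Prop:pointphi0} and \ref{Prop:pointpsi0}, and the higher modes are treated as errors using Lemma \ref{le:region1}. Throughout Region $\mathrm{I}$ we use $r\le \frac12 u^{1-\delta}$, so that $u\simeq v$ and $v=u(1+O(u^{-\delta}))$. We also use the interior expansions computed at the end of those two proofs:
\begin{equation*}
\phi_L=2u^{-1}+O(u^{-1-\delta}),\qquad \psi_L=2u^{-2}\ln u+O(u^{-2}).
\end{equation*}
Since $u^{-1}-v^{-1}=O(u^{-1-\delta})$ and $u^{-2}\ln u-v^{-2}\ln v=O(u^{-2-\delta}\ln u)$, these become $\phi_L=2v^{-1}+O(v^{-1-\delta})$ and $\psi_L=2v^{-2}\ln v+O(v^{-2})$. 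This already shows that the two forms of the statement are equivalent, once we note that $\phi_L\simeq v^{-1}$ and that $(\ln u)^{-1}\psi_L\simeq v^{-2}$.

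\textbf{Estimate for $\phi$.} Write $\phi-\mathfrak{c}_1\phi_L=(\phi_{\ell=0}-\mathfrak{c}_1\phi_L)+\phi_{\ge1}$. Proposition \ref{Prop:pointphi0} bounds the first term by $\epsilon u^{-\delta/2}\phi_L$. Estimate \eqref{eq:esti:geq1modes:regionI:phi} bounds the second by $\epsilon v^{-1-\delta/2}\lesssim \epsilon u^{-\delta/2}\phi_L$. Together these give \eqref{esti:precise:phi:RegionI}. Replacing $\mathfrak{c}_1\phi_L$ by $2\mathfrak{c}_1 v^{-1}$ then costs $|\mathfrak{c}_1|v^{-1-\delta}$, which gives \eqref{esti:precise:phi:RegionI:other}.

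\textbf{Estimate for $\bar\psi$.} Here we need the higher modes of $\bar\psi$, and
\begin{equation*}
\bar\psi_{\ge1}=\psi_{\ge1}+\tfrac12(\phi^2)_{\ge1}.
\end{equation*}
Estimate \eqref{eq:esti:geq1modes:regionI:psi} gives $|\psi_{\ge1}|\lesssim \epsilon v^{-2-\delta/2}$. For the quadratic term, $(\phi^2)_{\ge1}=2\phi_{\ell=0}\phi_{\ge1}+(\phi_{\ge1}^2)_{\ge1}$. The zeroth mode of $\phi^2$ drops out of the $\ge 1$ projection, so only these two pieces remain. Using $|\phi_{\ell=0}|\lesssim \epsilon v^{-1}\ln v$ from \eqref{est:almost1} together with the bound on $\phi_{\ge1}$, this part is $O(\epsilon^2 v^{-2-\delta/2}\ln v)$. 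Hence $|\bar\psi_{\ge1}|\lesssim \epsilon v^{-2}$, which is much smaller than $(\ln u)^{-1}\psi_L\simeq v^{-2}$.

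Combining this with Proposition \ref{Prop:pointpsi0}, which bounds $\bar\psi_{\ell=0}-\mathfrak{c}_2\psi_L$ by $\epsilon(\ln u)^{-1}\psi_L$, gives \eqref{esti:precise:psi:RegionI}. Replacing $\mathfrak{c}_2\psi_L$ by $2\mathfrak{c}_2v^{-2}\ln v$ costs $O(|\mathfrak{c}_2|v^{-2})$, which gives \eqref{esti:precise:psi:RegionI:other}.

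\textbf{Main obstacle.} The only delicate point is that Lemma \ref{le:region1} controls $\psi_{\ge1}$, not $\bar\psi_{\ge1}$. The step to check carefully is therefore the bound on the higher modes of $\phi^2$: it must beat $v^{-2}$, and it does, because one factor is always $\phi_{\ge1}$, which gains $v^{-\delta/2}$. Everything else is bookkeeping of $u$ versus $v$ within Region $\mathrm{I}$.
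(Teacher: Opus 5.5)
Your proof is correct and follows the paper's route: the zeroth-mode estimates of Propositions \ref{Prop:pointphi0} and \ref{Prop:pointpsi0}, plus the higher-mode bounds of Lemma \ref{le:region1}, plus the interior expansions $\phi_L=2v^{-1}+O(v^{-1-\delta})$ and $\psi_L=2v^{-2}\ln v+O(v^{-2})$. The only minor difference is the $\tfrac12(\phi^2)_{\ge1}$ part of $\bar\psi_{\ge1}$: the paper uses the just-proved bound \eqref{esti:precise:phi:RegionI:other}, i.e.\ $|\phi|\lesssim \ep v^{-1}$ with no logarithm in Region I, so $|\phi^2|\lesssim \ep^2 v^{-2}$ already fits the error budget without any gain, whereas you split $(\phi^2)_{\ge1}=2\phi_{\ell=0}\phi_{\ge1}+(\phi_{\ge1}^2)_{\ge1}$ so that the $v^{-\delta/2}$ gain of $\phi_{\ge1}$ absorbs the $\ln v$ from \eqref{est:almost1}; both work.
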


\begin{proof}
The estimate \eqref{esti:precise:phi:RegionI} for $\phi$ follows from combining the estimate \eqref{est:pointphi0} for $\phi_{\ell=0}$ with the estimate \eqref{eq:esti:geq1modes:regionI:phi} for $\phi_{\geq 1}$, and the alternative estimate \eqref{esti:precise:phi:RegionI:other} for $\phi$ follows from \eqref{esti:precise:phi:RegionI} together with the following fact
\begin{equation*}
\phi_{L}=-r^{-1}\ln \bigg(1-\frac{2r}{v}\bigg)= 2v^{-1} + O(rv^{-2}) = 2v^{-1} + O(v^{-1-\de}) \quad \textrm{in\,\, Region\,\, I}.
\end{equation*}

  Combining the estimate \eqref{est:pointpsi0} for $\bar{\psi}_{\ell=0}$ with the estimate \eqref{eq:esti:geq1modes:regionI:psi} for $\psi_{\geq 1}$, and in view of the estimate \eqref{esti:precise:phi:RegionI:other} and the following fact
\begin{align*}
\psi_{L}={}&r^{-1} \bigg(\frac{\ln u}{u} - \frac{\ln v}{v}\bigg)=r^{-1}\bigg(\frac{2r\ln u }{u v} + \frac{\ln v - \ln u }{v}
\bigg)
\\
={}&\frac{2\ln u }{uv} +O(v^{-2})=\frac{2\ln v}{v^2} + O(v^{-2}) \quad \textrm{in\,\, Region\,\, I},
\end{align*}
the other estimates \eqref{esti:precise:psi:RegionI} and  \eqref{esti:precise:psi:RegionI:other} then follow.
\end{proof}

\begin{remark}\label{re:region12}
Note that, using the Cauchy-Schwarz inequality, for $u\in (1,+\infty)$ large enough,
 \begin{equation}\label{est:phiLpsiL}
\begin{aligned}
    \phi_{L}^{2}=r^{-2}\left(\int_{u}^{v}\sigma^{-1}\d \sigma\right)^{2}
    \lesssim r^{-1}\int_{u}^{v}\sigma^{-2}\d \sigma\lesssim r^{-1}\int_{u}^v \frac{\ln \sigma }{\sigma^2} \d \sigma\lesssim 
    \frac{\psi_{L}}{\ln u}.
\end{aligned}
\end{equation}
Hence, from Propositions~\ref{prop:pointphi}, \ref{Prop:pointpsi0} and \ref{prop:precise:phi:RegionI} and the almost sharp decay estimate \eqref{est:almost1}, it follows that in the large space-time region $\left\{2r\le u^{1-\delta}\right\}\cup\left\{2r\ge u^{1+\delta}\right\}$, the leading order terms in the precise decay of $(\psi,\psi_{\ell=0})$ are the same as the ones of $(\bar{\psi},\bar{\psi}_{\ell=0})$.
\end{remark}

\section{Genericity of the sharp decay rates}\label{sec:generic}

In this section, we prove that for a generic set of small initial data, the constants $\{\cfrak_{i}\}_{i=1,2}$ are non-zero and the functions on the sphere $\{\cfrak_i\}_{i=3,4}$ do not vanish identically. We start with the definition of the admissible initial data set. 

\begin{definition}[Admissible initial data set $\Sfrak$]
\label{def:admissibledata}
A pair of initial data $(\phiinit,\psiinit)=(\phi_0,\phi_1,\psi_0,\psi_1)$ on $\mathcal{H}_1$ is called  \textit{admissible initial data} if it satisfies the condition \eqref{est:smallness1} for a given $N\geq 6$ with $0<\ep\ll 1$ suitably small such that the global existence and decay estimates statement in Theorem \ref{thm:main1} for the solution $(\phi,\psi)$ to the system \eqref{equ:main} with initial data $(\phiinit,\psiinit)=(\phi_0,\phi_1,\psi_0,\psi_1)$  hold.
The set of all admissible initial data is called the admissible initial data set and denoted as $\Sfrak$.
\end{definition}

We next define a distance function on the admissible initial data set $\Sfrak$.

\begin{definition}[Distance function and its induced topology]
\label{def:distancefunction}
Let $N\geq 6$ be given. Let $(\phiinit,\psiinit)=(\phi_0,\phi_1,\psi_0,\psi_1)$ and $(\phihinit,\psihinit)=(\hat\phi_0,\hat\phi_1,\hat\psi_0,\hat\psi_1)$ be two pairs of admissible initial data in $\Sfrak$, and let $(\phi,\psi)$ and $(\hat\phi,\hat\psi)$ be the global solutions to the system of wave equations \eqref{equ:main} arising from these pairs of initial data respectively. Define a distance function for these two pairs of admissible initial data by
\begin{align}
\lab{eq:distancefunction}
\dist{(\phiinit,\psiinit),(\phihinit,\psihinit)}
:={}&   \|\langle x\rangle^{{N}+2} \partial_x^{\leq N+1} (\phi_0-\hat\phi_0,  \psi_0-\hat\psi_0)\|\notag\\
    &+  \|\langle x\rangle^{{N}+2} \partial_x^{\leq N} (\phi_1-\hat\phi_1, \psi_1-\hat\psi_1)\|.
\end{align}
This distance function trivially induces a natural topology in the admissible initial data set $\Sfrak$. 
\end{definition}

Note that the right-hand side of \eqref{eq:distancefunction} is the norm showing up on the left-hand side of the smallness condition \eqref{est:smallness1}, but for the difference of the two given initial data  $(\phiinit,\psiinit)$ and $(\phihinit,\psihinit)$.

Recall the definition of the constants $(\cfrak_1, \cfrak_2)$ and functions $(\cfrak_3(\omega), \cfrak_4(\omega))$ appearing in the principal term of the late-time asymptotics of $(\phi,\psi)$ in the main theorem \ref{thm:main1}. In view of $2\partial_t = U+V$, and since applying once $V$ derivative gains an extra $v^{-1}$ decay as shown in the decay estimates \eqref{est:almost1}, we have
\begin{subequations}
\label{def:cfrakanddfrakconsts:copy}
\begin{align}
\cfrak_1=&\frac{1}{32\pi }\int_{\mathbb{S}^2}\int_{0}^{+\infty}(U (r\psi))^2 (u,+\infty,\omega)\d u \d\omega, \\
\cfrak_2=&\frac{1}{16\pi}\int_{\mathbb{S}^2}\int_{0}^{+\infty}\bigg( \frac{r\phi}{\ln v} (U (r\psi))^2\bigg) (u,+\infty,\omega)\d u \d\omega, \\
 \cfrak_3(\omega)=&\frac{1}{8}\int_{0}^{+\infty}(U (r\psi))^2 (u,+\infty, \omega) \d u,\\
\cfrak_4(\omega)=&\frac{1}{4}\int_{0}^{+\infty} \bigg(\frac{r\phi}{\ln v} (U(r\psi))^2 \bigg)(u,+\infty,\omega)\d u.
\end{align}
\end{subequations}

We introduce the sets
$
(\Sfrak_{\neq 0, i})_{i=1}^{4},
$
where the sets $\Sfrak_{\neq 0, 1}$ and $\Sfrak_{\neq 0, 2}$ are composed by the admissible initial data such that the constants $\cfrak_i\neq 0$ for $i\in \{1,2\}$, and where the sets $\Sfrak_{\neq 0, 3}$ and $\Sfrak_{\neq 0, 4}$ are composed by the admissible initial data such that there exists an $\omega\in\mathbb{S}^2$ such that $\cfrak_i(\omega)\neq 0$ for $i\in\{3,4\}$. Further, we define the following subset of the admissible initial data set
\begin{equation*}
\Sfrak_{\neq 0}:=\Sfrak_{\neq 0, 1}\cap \Sfrak_{\neq 0, 2}\cap\Sfrak_{\neq 0, 3}\cap \Sfrak_{\neq 0, 4}.
\end{equation*}

Our objective in this section is to show that the subset $\Sfrak_{\neq 0}$ is an open and dense subset of $\Sfrak$ with respect to the induced topology introduced in Definition \ref{def:distancefunction}. In other words, we show that, for generic initial data in the admissible initial data set $\Sfrak$, both of the constants $(\cfrak_1,\cfrak_{2})$ are non-vanishing and neither of the functions $(\cfrak_3(\omega),\cfrak_{4}(\omega))$ are identically vanishing over $\omega\in\mathbb{S}^2$.

\smallskip
Since $(\cfrak_{1},\cfrak_{2})$ are the spherical means of $(\cfrak_{3}(\omega),\cfrak_{4}(\omega))$, respectively, it holds true that  $\Sfrak_{\neq 0, 1}\subset \Sfrak_{\neq 0, 3}$ and $\Sfrak_{\neq 0, 2}\subset \Sfrak_{\neq 0, 4}$, and thus,
\begin{equation}\label{eq:1-11-sept-2025}
\Sfrak_{\neq 0} 
=\Sfrak_{\neq 0, 1}\cap \Sfrak_{\neq 0, 2}\cap\Sfrak_{\neq 0, 3}\cap \Sfrak_{\neq 0, 4}
= \Sfrak_{\neq 0, 1}\cap \Sfrak_{\neq 0, 2}.
\end{equation}
To show $\Sfrak_{\neq 0}$ is open and dense, it suffices to prove that $\Sfrak_{\neq 0, 1}$ and $\Sfrak_{\neq 0, 2}$ are open and dense, because the intersection of two open-dense sets is still open and dense.

Notice that the constants $\{\cfrak_i[\phi,\psi]\}_{i=1,2}$ are continuous functions of the solution $(\phi,\psi)$, and are hence continuously dependent on its initial data $(\phiinit,\psiinit)$ in view of the global well-posedness of the system~\eqref{equ:main}. Therefore, the subsets $\Sfrak_{\neq 0, i}$ for $i\in \{1,2\}$ are open. As a consequence, to establish that $\Sfrak_{\neq 0}$ is open and dense, it remains to show that the sets $\{\Sfrak_{\neq 0, i}\}_{i=1,2}$ are both dense in $\Sfrak$, which is the main task of the following subsections~\ref{subsect:density:genericity} and \ref{subsect:density:genericity:G2neq0}.


\subsection{The subset $\Sfrak_{\neq 0, 1}$ is dense in $\Sfrak$}
\label{subsect:density:genericity}
We start with a technical lemma, asserting a faster decay in $v$ for the term $U\Psi$ in the case of $\mathfrak{c}_1=0$.

\begin{lemma}\label{lem:UPsi-v-decay}
    Assume that $\mathfrak{c}_1=0$. Then we have
    \begin{align*}
        |U\Psi|+|V\Psi|
        \lesssim
        \epsilon v^{-1}u^{-1}\ln^{2}v.
    \end{align*}
\end{lemma}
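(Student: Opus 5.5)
The idea is to use the vanishing of $\cfrak_1$ to show that the radiation field $\Psi$ vanishes identically at null infinity. Then $U\Psi$ can be recovered by integrating its $V$-derivative inward from $\mathcal{I}$, and the wave equation \eqref{equ:PhiPsi:waveUV} controls that $V$-derivative.

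\textbf{Step 1 (the $V\Psi$ bound).} This follows directly from the almost sharp decay \eqref{est:almost1}. Indeed, $V\Psi=rV\psi+\psi$ (since $Vr=1$), and \eqref{est:almost1} gives $r|V\psi|+|\psi|\lesssim \ep v^{-1}u^{-1}(\ln u+(u/v)\ln^2 v)\lesssim \ep v^{-1}u^{-1}\ln^2 v$.

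\textbf{Step 2 (the region $u\ge v/2$).} Here $r\lesssim u$, and \eqref{est:almost1} gives
\[
|U\Psi|\le r|U\psi|+|\psi|\lesssim \ep (r u^{-1}+1)v^{-1}u^{-1}\ln^2 v\lesssim \ep v^{-1}u^{-1}\ln^2 v.
\]
So the real content of the lemma is the region $u\le v/2$, where $r\simeq v$.

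\textbf{Step 3 ($\Psi$ vanishes on $\mathcal{I}$).} By the expression of $\cfrak_1$ in \eqref{def:cfrakanddfrakconsts:copy}, $\cfrak_1=0$ forces $(U\Psi)(u,+\infty,\omega)=0$ for all $u$ and $\omega$. Here one uses that the limit exists and is continuous, which follows from the equation for $VU\Psi$ below together with its integrability in $v$. Hence $\Psi(u,+\infty,\omega)$ is independent of $u$. On the other hand, \eqref{est:almost1} gives $|\Psi(u,+\infty,\omega)|\lesssim \ep u^{-1}\ln u\to 0$ as $u\to\infty$. Therefore $\Psi(\cdot,+\infty,\cdot)\equiv 0$, and in particular $U\Psi(u,+\infty,\omega)=0$.

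\textbf{Step 4 (integration from $\mathcal{I}$).} For $u\le v/2$ I write
\[
U\Psi(u,v,\omega)=-\frac12\int_v^{+\infty}(VU\Psi)(u,\nu,\omega)\,\d\nu,
\qquad
VU\Psi=r^{-2}\Deltas\Psi+rQ_0(\phi,\phi),
\]
where the second identity is \eqref{equ:PhiPsi:waveUV}. Along the integration path $r\simeq \nu$.

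The angular term is handled by \eqref{est:almost1} applied to $\Omega^2\psi$, which gives
\[
r^{-2}|\Deltas\Psi|\lesssim \ep \nu^{-2}u^{-1}\ln^2\nu .
\]

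The main obstacle is the null form. The crude bound $|Q_0|\lesssim \ep v^{-3}u^{-1}\ln^4 v$ from the proof of Proposition~\ref{prop:existence} loses two logarithms, so I will recompute it with the sharp $\phi$-bounds of Proposition~\ref{prop:existence} inserted into Lemma~\ref{le:null}:
\[
(s/t)^2|\partial\phi|^2\lesssim \frac{u}{v}\cdot\frac{\ep^2\ln^2 v}{v^2u^2},
\qquad
t^{-1}|L\phi||\partial\phi|\lesssim v^{-1}\cdot\frac{\ep\ln v}{v}\cdot\frac{\ep\ln v}{vu}.
\]
Together these give $|Q_0|\lesssim \ep v^{-3}u^{-1}\ln^2 v$, hence $r|Q_0|\lesssim \ep\nu^{-2}u^{-1}\ln^2\nu$ along the path.

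Integrating the combined bound $\ep\nu^{-2}u^{-1}\ln^2\nu$ over $\nu\in[v,\infty)$ yields $|U\Psi|\lesssim \ep v^{-1}u^{-1}\ln^2 v$ in this region. Combined with Steps 1 and 2, this completes the proof.
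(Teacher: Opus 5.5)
Your proposal is correct and follows essentially the paper's argument: the $V\Psi$ bound comes directly from \eqref{est:almost1}, $\cfrak_1=0$ forces $U\Psi=0$ on $\mathcal{I}$, and $U\Psi$ is then recovered by integrating $VU\Psi=r^{-2}\Deltas\Psi+rQ_0(\phi,\phi)$ inward from $\mathcal{I}$, with the integrand bounded by $\ep v^{-2}u^{-1}\ln^2 v$. Your extra steps (proving $\Psi\equiv 0$ on $\mathcal{I}$, and treating $u\ge v/2$ separately) are harmless but unnecessary, because \eqref{est:Omega} gives $r^{-2}|\Deltas\Psi|\lesssim r v^{-2}\sum_{|I|\le 2}|\Gamma^{I}\psi|$ uniformly. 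Your explicit recomputation of $Q_0$ with the sharp $\phi$-bounds is a useful addition, since it justifies the $\ln^2 v$ (rather than $\ln^4 v$) that the paper asserts without comment.
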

\begin{proof}
First, from~\eqref{est:almost1}, we directly have 
\begin{equation*}
    |V\Psi|
        \lesssim
        \epsilon v^{-1}u^{-1}\left(\ln u+(u/v)\ln^{2}v\right)
        \lesssim
        \epsilon v^{-1} u^{-1}\ln^{2}v.
\end{equation*}
Second, from the definition of $\mathfrak{c}_{1}$ in~\eqref{def:cfrakanddfrakconsts:copy}, we find 
\begin{equation*}
    \mathfrak{c}_{1}=\frac{1}{32\pi }\int_{\mathbb{S}^2}\int_{0}^{+\infty}(U (r\psi))^2 (u,+\infty,\omega)\d u \d\omega=0\Longrightarrow U\Psi\equiv 0,\ \ \mbox{on}\ \ \mathcal{I}.
\end{equation*}
It follows from the equation  $VU\Psi = r^{-2} \slashed{\Delta} \Psi + r Q_0(\phi, \phi)$ that 
\begin{equation*}
    |U\Psi(u, v)|
    \lesssim
    \int_v^{+\infty} \big|  r^{-2} \slashed{\Delta} \Psi + r Q_0(\phi, \phi)  \big|(u, \nu) \, d\nu.
\end{equation*}
By the almost sharp decay~\eqref{est:almost1}, we find 
\begin{equation*}
    \big|  r^{-2} \slashed{\Delta} \Psi + r Q_0(\phi, \phi)  \big|(u, v)
    \lesssim
    \epsilon v^{-2}u^{-1}\ln^{2}v,
\end{equation*}
which implies
\begin{equation*}
    |U\Psi(u, v)|
    \lesssim
    \epsilon u^{-1} \int_v^{+\infty}  \nu^{-2}\ln^{2}\nu \d \nu
    \lesssim
    \epsilon v^{-1} u^{-1}\ln^{2}v.
\end{equation*}
Combining the above estimates, we complete the proof for Lemma \ref{lem:UPsi-v-decay}.
\end{proof}

We are in a position to show that the subset $\Sfrak_{\neq 0, 1}$ is dense in $\Sfrak$.

\begin{proof}[Proof of denseness of $\Sfrak_{\neq 0, 1}$ in $\Sfrak$]

The proof proceeds by contradiction.
Assume otherwise that there exists a pair of initial data $(\phi_0, \phi_1, \psi_0, \psi_1)\in \Sfrak$ such that:
\begin{enumerate}[label=\arabic*)]
    \item $\mathfrak{c}_1(\phi_0, \phi_1, \psi_0, \psi_1) = 0$;
    \item\label{point2:contradictionofdense}
    There exists $0<\epsilon_1\ll1$ such that $\mathfrak{c}_1(\phi'_0, \phi'_1, \psi'_0, \psi'_1)=0$ for all initial data $(\phi'_0, \phi'_1, \psi'_0, \psi'_1)$  belonging to the deleted $\epsilon_1$-neighborhod of $(\phi_0, \phi_1, \psi_0, \psi_1)$ in $\Sfrak$.
\end{enumerate}

We denote by $(\check{\phi}, \check{\psi})=(\phi - \phi', \psi-\psi')$ the difference of the two solution pairs, in which the pair $(\phi', \psi')$ solves \eqref{equ:main} with initial data $(\phi'_0, \phi'_1, \psi'_0, \psi'_1)$.

\smallskip
\textbf{Step 1.} Bounds for $\check{\psi}$.
We claim that 
\begin{equation}\label{eq:psi-check}
    \sup_{s\in [1, +\infty)} \mathcal{E}(s, \check{\psi})
    \lesssim
    \epsilon \sup_{s\in [1, +\infty)} \mathcal{E}(s, \check{\phi}).
\end{equation}
Indeed, we consider the following equation for the difference $\check{\psi}$ with $\check{\Psi}=r\check{\psi}$,
\begin{align*}
    UV \check{\Psi} - r^{-2} \slashed{\Delta} \check{\Psi}
    =
    rQ_0(\phi, \check{\phi}) + rQ_0(\check{\phi}, \phi').
\end{align*}
By the standard energy estimate in the space-time region $\mathcal{H}_{[s,+\infty)}$ with the boundary $\mathcal{H}_{s}\cup \mathcal{I}$ and then using the fact that $U\Psi=0$ on $\mathcal{I}$,
\begin{equation*}
    \mathcal{E}(s, \check{\psi}) 
    \lesssim
    \int_{s}^{+\infty} \int_{0}^{+\infty}\int_{\mathbb{S}^2} {\tau\over t}\Big| rQ_0(\phi, \check{\phi}) + rQ_0(\check{\phi}, \phi') \Big| \big|\partial_t \check{\Psi}\big| \, \d \omega \d r \d\tau.
\end{equation*}
Note that, from $\tau=\sqrt{t^{2}-r^{2}}$ and the almost sharp decay~\eqref{est:almost1}, we obtain
\begin{equation*}
\begin{aligned}
    &{\tau\over t}\Big| rQ_0(\phi, \check{\phi}) + rQ_0(\check{\phi}, \phi') \Big| \big|\partial_t \check{\Psi}\big|
    \lesssim \tau^{-\frac{5}{4}} \Big|\frac{\tau r}{t} U\check{\psi}\Big|
    \Big(\left|rV\check{\phi}\right|+\Big|\frac{\tau r}{t} U\check{\phi}\Big|\Big),
    \end{aligned}
\end{equation*}
which thus implies 
\begin{align*}
    \mathcal{E}(s, \check{\psi}) 
    &\lesssim
    \epsilon \int_s^{+\infty} \tau^{-{5\over 4}} \mathcal{E}(\tau, \check{\phi})^{1\over 2} \mathcal{E}(\tau, \check{\psi})^{1\over 2} \, \d\tau
    \\
    &\lesssim
    \epsilon \int_s^{+\infty} \tau^{-{5\over 4}} \big(\mathcal{E}(\tau, \check{\phi}) + \mathcal{E}(\tau, \check{\psi}) \big) \, \d\tau\\
    &\lesssim
    \epsilon \sup_{s\in [1, +\infty)}\mathcal{E}(s,\check{\phi}) + \epsilon\sup_{s\in [1, +\infty)}\mathcal{E}(s, \check{\psi}).
\end{align*}
By smallness of $\epsilon$, we complete the proof of the desired estimate~\eqref{eq:psi-check}.

\smallskip
\textbf{Step 2.} Bounds for $\check{\phi}$.
We claim that 
\begin{equation}\label{eq:phi-check}
    \sup_{s\in [1, +\infty)} \mathcal{E}(s, \check{\phi})
    \lesssim
    \mathcal{E}(1, \check{\phi})
    +
    \epsilon \sup_{s\in [1, +\infty)} \mathcal{E}(s, \check{\psi}).
\end{equation}
Note that the energy bound for $\check{\phi}$ generally exhibits a logarithmic divergence. However, under the assumption $\mathfrak{c}_{1} = 0$, a standard energy estimate for the wave equation satisfied by $\check{\phi}$ over $\mathcal{H}_{[1,s]}$ yields a more refined energy bound. More precisely, by an elementary computation, we have 
\begin{align*}
    -\Box \check{\phi}
    =
    \partial_t \check{\psi} (\partial_t \psi + \partial_t \psi'),
\end{align*} 
which directly implies
\begin{align*}
    \mathcal{E}(s, \check{\phi})
    \lesssim{}
    &\mathcal{E}(1, \check{\phi})
    +
    \int_1^s \int_{\mathcal{H}_\tau} {\tau\over t}\big| \partial_t \check{\psi} (\partial_t \psi + \partial_t \psi') \big| \cdot \big| \partial_t \check{\phi}  \big| \, \d x \d\tau.
\end{align*}
Note that, from the almost sharp decay~\eqref{est:almost1} and Lemma~\ref{lem:UPsi-v-decay}, we obtain
\begin{equation*}
    |\partial_t \psi |
    +|\partial_t \psi' |
    \lesssim \min \left(v^{-1}u^{-2}\ln^{2}v,r^{-1}v^{-1}u^{-1}\ln^{2}v\right).
\end{equation*}
It follows that 
\begin{align*}
    \mathcal{E}(s, \check{\phi})
    &\lesssim
    \mathcal{E}(1, \check{\phi})
    +
    \ep\int_1^s \tau^{-{5\over 4}} \big( \mathcal{E}(\tau, \check{\phi}) + \mathcal{E}(\tau, \check{\psi})\big) \, \d\tau\\
    &\lesssim
    \mathcal{E}(1, \check{\phi})
    +
    \epsilon \sup_{s\in [1, +\infty)}\mathcal{E}(s, \check{\phi}) +  \epsilon \sup_{s\in [1, +\infty)}\mathcal{E}(s, \check{\psi}).
\end{align*}
By smallness of $\epsilon$, we complete the proof of the desired estimate \eqref{eq:phi-check}.

\smallskip
\textbf{Step 3.} Conclusion. Combining \eqref{eq:psi-check} and \eqref{eq:phi-check}, we arrive at
\begin{align*}
    \sup_{s\in [1, +\infty)} \mathcal{E}(s, \check{\psi})
    \lesssim
    \epsilon \mathcal{E}(1, \check{\phi})
    +
    \epsilon^2 \sup_{s\in [1, +\infty)} \mathcal{E}(s, \check{\psi})\Longrightarrow \mathcal{E}(1, \check{\psi})
    \lesssim
    \epsilon \mathcal{E}(1, \check{\phi}),
\end{align*}
which contradicts the condition \ref{point2:contradictionofdense} since this estimate can not hold everywhere in the deleted $\epsilon_1$-neighborhod of $(\phi_0, \phi_1, \psi_0, \psi_1)$ in $\Sfrak$. This completes the proof that the subset $\Sfrak_{\neq 0, 1}$ is dense in $\Sfrak$.
\end{proof}

\subsection{The subset $\Sfrak_{\neq 0, 2}$ is dense in $\Sfrak$}
\label{subsect:density:genericity:G2neq0}

Recall from \eqref{est:Firsttech} that, in the space-time region $\mathcal{D}_{\rm{ext},2\delta}$, we have
\begin{equation*}
\left|U\Phi-\ln v \left(\pt \Psi\right)^{2}\right|\lesssim \ep u^{-1}\ln u\Longrightarrow \left(U\bigg(\frac{\Phi}{\ln v}\bigg)\right)(u,+\infty,\omega)= \frac{1}{4}(U \Psi)^2(u,+\infty,\omega).
\end{equation*}
It follows from the Fundamental Theorems of Calculus that
\begin{equation*}
\left(\frac{r\phi}{\ln v}\right){(u,+\infty, \omega)}= \frac{1}{4}\int_{0}^{u}(U \Psi)^2(\sigma,+\infty,\omega)\d\sigma, \quad \mbox{for any} \ \ u>0.
\end{equation*}
Plugging this into the formulas of $\mathfrak{c}_2$ and $\mathfrak{c}_4(\omega)$ in \eqref{def:cfrakanddfrakconsts:copy}, we infer that $\mathfrak{c}_2=0$ is equivalent to  $\mathfrak{c}_1=0$, and  $\mathfrak{c}_4(\omega)=0$ is equivalent to $\mathfrak{c}_3(\omega)=0$. Therefore, the statement that the subset $\Sfrak_{\neq 0, 2}$ is dense in $\Sfrak$ follows from  the above proven statement in Section \ref{subsect:density:genericity} that the subset $\Sfrak_{\neq 0, 1}$ is dense in $\Sfrak$.

\section{Proof of Theorem \ref{thm:higher}}\label{sec:appendix}

Fix $\ell \in \mathbb{Z}_+$, and consider the spacetime region $\mathcal{M}_{\ell} =\{ r\geq u^{1-\delta_\ell} \}$ with $\delta_\ell \leq \min\{ {\delta\over 2},{1\over 4\ell + 4}\}$. 
With a slight abuse of notation, below we write
\begin{align*}
    \Phi = \Phi_{\ell}, \qquad F = \big((\partial_t \Psi)^2\big)_{\ell}.
\end{align*}
The equation of $\Phi$ then reads
\begin{align*}
    r^2 U V \Phi + \ell (\ell+1) \Phi = rF.
\end{align*}
Acting $(r^2 V)^\ell$ on both sides of the equation, we obtain
\begin{align*}
    r^2 U V \Phi^{(\ell)} + 2\ell r V\Phi^{(\ell)} = (r^2 V)^\ell (rF),
\end{align*}
in which
\begin{align*}
    \Phi^{(k)} := (r^2 V)^k \Phi, \quad \forall \,\, k\in\mathbb{N}.
\end{align*}
Rescaling by $r^{-2\ell -2}$ gives
\begin{align*}
    U(r^{-2\ell} V\Phi^{(\ell)})
    =
    r^{-2\ell-2} (r^2 V)^\ell (rF),
\end{align*}
which, by multiplying by $v^{\ell+1}$, leads to
\begin{align*}
    U(v^{\ell+1} r^{-2\ell-2} \Phi^{(\ell+1)})
    =
    v^{\ell+1}r^{-2\ell-2} (r^2 V)^\ell (rF).
\end{align*}
We integrate from $\mathcal{H}_1$ along $U$ direction (i.e., the integral curve of $U$) to get
\begin{align*}
    \big(v^{\ell+1} r^{-2\ell-2} \Phi^{(\ell+1)}\big)(u, v)
    -
    \big(v^{\ell+1} r^{-2\ell-2} \Phi^{(\ell+1)}\big)(v^{-1}, v)
    =
    {1\over 2}\int_{v^{-1}}^u G ,
\end{align*}
with
\begin{align*}
    G
    = {}&v^{\ell+1}r^{-2\ell-2} (r^2 V)^\ell (rF)
    \\
    = {}&v^{\ell+1} r^{-2\ell-2} (r^2 V)^{\ell}r \cdot F
    +
    v^{\ell+1} r^{-2\ell-2} \sum_{i=0}^{\ell-1} \binom{\ell}{i} (r^2 V)^{i} r \cdot (r^2 V)^{\ell-i}F
    \\
    ={}
    &\ell! v^{\ell+1} r^{-\ell-1} F
    +
     v^{\ell+1} r^{-2\ell-2} \sum_{i=0}^{\ell-1} 
\binom{\ell}{i} 
i! r^{i+1} \cdot (r^2 V)^{\ell-i}F.
\end{align*}
By applying the equation of $\Psi$, we have
\begin{align*}
    (r^2 V) F = {}&2r^2 \big(\partial_t \Psi \partial_t V \Psi\big)_{\ell}
    \\
    = {}&r^2 \big(\partial_t \Psi (VV\Psi -r^{-2} \Deltas\Psi + r Q_{0}(\phi, \phi))\big)_{\ell}
    \\
    ={}
    &\mathcal{O}(\ep^2 u^{-2+\delta} (\ln{v})^2),
\end{align*}
and more generally for $i= 0, 1, \cdots, \ell-1$, we have
\begin{align*}
    |(r^2 V)^{\ell-i} F|
    \lesssim_\ell \ep^2
    u^{-2+\delta} (\ln{v})^2 r^{\ell-i-1},
\end{align*}
which then yields
\begin{align*}
    G = \ell! v^{\ell+1} r^{-\ell-1} F
    + v^{\ell+1} r^{-\ell-2} \mathcal{O}_\ell (\ep^2 u^{-2+\delta} (\ln{v})^2).
\end{align*}
Consequently, we have, for $(u,v)\in\{ r\geq u^{1-2\delta_\ell} \}$,
\begin{align*}
    \int_{v^{-1}}^u G
    =2^{\ell+1} \ell! \mathcal{C}_\ell + \mathcal{E}_{rr, \ell},
\end{align*}
in which 
\begin{equation}
\lab{esti:CellandErrell}
\mathcal{C}_\ell = \int_{\mathcal{I}} F,
\qquad
\mathcal{E}_{rr, \ell}
= \mathcal{O}_\ell (\ep^2  v^{-1+ 2(\ell+2)\delta_\ell} (\ln{v})^2).
\end{equation}
Till now, we have derived
\begin{align}
\lab{esti:Phiell+1}
    \Phi^{(\ell+1)}(u, v)
    =
    v^{-(\ell+1)} r^{2\ell+2} (2^{\ell} \ell! \mathcal{C}_\ell + \mathcal{E}_{rr, \ell}).
\end{align}
We introduce the Bondi-Sachs coordinates $(u, R=r^{-1}, \omega)$, and in this coordinate system one has
\begin{align*}
    {\partial}^{BS}_u = \partial_t,
    \qquad
    \partial^{BS}_R = -r^2 V,
\end{align*}
as well as the relation
$$
v= {2+uR \over R}.
$$
Now, we have
\begin{align*}
    \Phi^{(\ell+1)} = (-\partial_R^{BS})^{\ell+1} \Phi.
\end{align*}
Hence, equation \eqref{esti:Phiell+1} can be written as
\begin{align}
\lab{esti:PhipartialRell+1derivatives}
   (-\partial_R^{BS})^{\ell+1} \Phi(u, v)
    =
    (2+uR)^{-(\ell+1)} R^{-\ell-2} (2^{\ell} \ell! \mathcal{C}_\ell + \mathcal{E}_{rr, \ell}).
\end{align}
Integrating this equation from $\Gamma := \{R = u^{-1+2\delta_{\ell}} \}$  along $-\partial_R^{BS}$ direction yields
\begin{align*}
    &(-1)^{\ell+1}\Phi(u, R) + \mathcal{B}_\ell
    \\
    ={}
    &\int_{R}^{u^{-1+2\delta_{\ell}}} \int_{R_\ell}^{u^{-1+2\delta_{\ell}}} \cdots \int_{R_1}^{u^{-1+2\delta_{\ell}}} {2^{\ell} \ell! \mathcal{C}_\ell + \mathcal{E}_{rr, \ell} \over (2+u\rho)^{\ell+1} \rho^{\ell+1}} \, d\rho dR_1 \cdots dR_{\ell}
    \\
    ={}
    &\mathcal{C}_\ell \int_{R}^{u^{-1+2\delta_{\ell}}} {(\rho-R)^\ell \over 2\rho^{\ell+1} (1+{u\rho\over 2})^{\ell+1}}   \, d\rho
    +\int_{R}^{u^{-1+2\delta_{\ell}}} {(\rho-R)^\ell \over 2\rho^{\ell+1} (1+{u\rho\over 2})^{\ell+1}}  {1\over 2^{\ell+1} \ell!}\mathcal{E}_{rr, \ell}  \, d\rho
    \\
    :={}
    &\mathfrak{G}_1 + \mathfrak{G}_2,
\end{align*}
in which $\mathcal{B}_\ell$ represents the contribution from the boundary $\Gamma$ with a bound from \eqref{eq:esti:geq1modes:regionI:phi}
\begin{align*}
    |\mathcal{B}_\ell|
    \lesssim_{\ell}
   \ep u^{- \delta_{\ell}}.
\end{align*}

By a simple change of variable $z=u\rho$, we find
\begin{align*}
    \mathfrak{G}_1
    =
    \mathcal{C}_\ell \int_{uR}^{u^{2\delta_{\ell}}} {(z-uR)^{\ell} \over 2z^{\ell+1} (1+ {z\over 2})^{\ell+1}} \, dz
    =
    \mathfrak{G}_{1a} + \mathfrak{G}_{1b},
\end{align*}
in which
\begin{align*}
    \mathfrak{G}_{1a}
    ={}
    &\frac{1}{2}\mathcal{C}_\ell \mathcal{D}_\ell({u r^{-1}}),
    \qquad
    \mathcal{D}_\ell({u r^{-1}})
    =\int_{uR}^{+\infty} {(z-uR)^{\ell} \over z^{\ell+1} (1+ {z\over 2})^{\ell+1}} \, dz,
    \\
    \mathfrak{G}_{1b}
    =
    &-\frac{1}{2}\mathcal{C}_\ell \int_{u^{2\delta_{\ell}}}^{+\infty} {(z-uR)^{\ell} \over z^{\ell+1} (1+ {z\over 2})^{\ell+1}} \, dz
    =
    \mathcal{O}_\ell (u^{-2(\ell+1)\delta_{\ell}}).
\end{align*}

Thanks to the bound \eqref{esti:CellandErrell} of $\mathcal{E}_{rr, \ell}$, we get
\begin{align*}
    |\mathfrak{G}_2|
    \lesssim_{\ell}
     \ep^2 u^{-\delta_{\ell}},
\end{align*}
which is a lower-order term.

Therefore, we infer in $\mathcal{M}_\ell$ that
\begin{align}
    \phi_\ell (u, r)
    =
     \frac{(-1)^{\ell+1}}{2r} \mathcal{C}_\ell \mathcal{D}_\ell({u r^{-1}})
    +
    \mathcal{O}_{\ell}(\ep r^{-1}u^{-\delta_\ell}).
\end{align}

\section{Proof of Theorem \ref{thm:blow-up}}\label{sec:growth}

We are in a position to complete the proof for Theorem~\ref{thm:blow-up}. The proof is based on the almost sharp decay~\eqref{est:almost1} and some standard energy estimates. We start with the following growth and boundedness estimates for  $\phi$ and $(\psi,\partial \psi)$.
\begin{lemma}
\label{lem:growthofL2ofphi}
   Under the assumptions of Theorem~\ref{thm:blow-up}, the following estimates hold for sufficiently large $t$.
   \begin{enumerate}
       \item \emph{(Growth estimates for $\phi$).} We have, 
    \begin{equation*}
        \mathfrak{c}_{1} t^{\frac{1}{2}}\lesssim \|\phi(t)\|\lesssim \epsilon t^{\frac{1}{2}}\ln t.
    \end{equation*}

    \item \emph{(Bounds for $L^{2}$ norm of $(\psi,\partial\psi)$).} We have 
    \begin{equation*}
        \|\psi\|_{L^{2}}+\|\partial \psi\|_{L^{2}}\lesssim \epsilon.
    \end{equation*}
    \end{enumerate}
\end{lemma}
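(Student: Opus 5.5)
For the upper bound in (i) we integrate the pointwise almost sharp decay \eqref{est:almost1}, namely $|\phi|\lesssim \epsilon v^{-1}\ln v$, over the slice $\{t\}\times\R^3$. Compact support of the data together with finite speed of propagation confines $\phi(t,\cdot)$ to the ball $\{r\le t+R_0\}$. Hence
\[
\|\phi(t)\|^2\lesssim \epsilon^2\int_0^{t+R_0} (t+r)^{-2}\ln^2(t+r)\, r^2\,\d r\lesssim \epsilon^2 t\ln^2 t,
\]
which gives $\|\phi(t)\|\lesssim \epsilon t^{1/2}\ln t$.

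For the lower bound, Cauchy--Schwarz on spheres gives $\|\phi(t)\|\ge \|\phi_{\ell=0}(t)\|$, so it suffices to bound the zeroth mode from below. We apply Proposition~\ref{Prop:pointphi0} in the annulus $A_t:=\{t/4\le r\le t/2\}$. There $u\simeq v\simeq t$ and
\[
\phi_L=r^{-1}\ln(v/u)\simeq t^{-1}.
\]
Proposition~\ref{Prop:pointphi0} then yields
\[
|\phi_{\ell=0}|\ge \mathfrak{c}_1\phi_L-C\epsilon u^{-\delta/2}\phi_L\gtrsim \mathfrak{c}_1 t^{-1}
\]
once $t$ is large; if $\mathfrak{c}_1=0$ the bound is trivial. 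Integrating over $A_t$, whose volume is $\simeq t^3$, gives
\[
\|\phi(t)\|^2\gtrsim \mathfrak{c}_1^2 t^{-2}\,t^3=\mathfrak{c}_1^2 t,
\]
that is, $\|\phi(t)\|\gtrsim \mathfrak{c}_1 t^{1/2}$. The step that needs care is the absorption: the error $\epsilon u^{-\delta/2}\phi_L$ must be smaller than $\mathfrak{c}_1\phi_L/2$. This holds because $\mathfrak{c}_1$ is a fixed constant while the factor $u^{-\delta/2}\to 0$, so it holds for all $t\ge T(\mathfrak{c}_1)$, which is exactly the meaning of ``sufficiently large $t$''.

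For (ii), the energy bound comes from the global existence proof. Proposition~\ref{prop:existence} (the improved bootstrap) gives $\mathcal{E}(s,\psi)\lesssim\epsilon^2$ uniformly on hyperboloids. To pass to flat slices $\{t\}\times\R^3$, we apply the standard energy identity on the region between $\mathcal{H}_1\cap\{\ldots\}$ and the slice $t$, using the compact support. The flat energy is then bounded by the initial energy plus
\[
\int_0^t \|(\partial_t\psi)\,Q_0(\phi,\phi)\|_{L^1}\,\d t'.
\]
Lemma~\ref{le:null} and \eqref{est:almost1} give
\[
\|Q_0(\phi,\phi)(t')\|\lesssim \epsilon^2 t'^{-3/2}\ln^4 t',
\]
which is integrable in $t'$, so $\|\partial\psi(t)\|\lesssim \epsilon$.

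For $\|\psi\|$ itself we integrate the pointwise bound
\[
|\psi|\lesssim \epsilon v^{-1}u^{-1}\bigl(\ln u+(u/v)\ln^2 v\bigr)
\]
over $r\le t+R_0$. The region $u\simeq t$ contributes $\epsilon^2 t^{-4}\ln^2 t\cdot t^3$. Near the cone, $|\psi|\lesssim \epsilon t^{-1}u^{-1}\ln u$, and after the change of variables $r\mapsto u$ the contribution is
\[
\int t^{-2}u^{-2}\ln^2u\; t^2\,\d u\lesssim\epsilon^2.
\]
The term carrying $\ln^2 v$ is similarly bounded by $\epsilon^2 t^{-4}\ln^4 t\cdot t^3$, with the $u$-integral convergent. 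Hence $\|\psi(t)\|\lesssim\epsilon$ uniformly in $t$, completing (ii).
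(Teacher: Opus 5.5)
Your proof is correct. For (i) and for $\|\psi\|$ it is the paper's argument: you integrate the pointwise bounds \eqref{est:almost1} over the support $r\lesssim t$, and for the lower bound you combine $\|\phi\|\ge\|\phi_{\ell=0}\|$ with Proposition~\ref{Prop:pointphi0} on an annulus where $u\simeq v\simeq r\simeq t$. The paper uses $t/8<r<t/4$ and you use $t/4\le r\le t/2$. Your remark that the threshold for ``large $t$'' depends on $\mathfrak{c}_1$ is exactly what is implicit in the paper.

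The one real difference is $\|\partial\psi\|$. You run a flat-slice energy estimate with source $Q_0(\phi,\phi)$. It closes via $\|(\partial_t\psi)Q_0(\phi,\phi)\|_{L^1}\le\|\partial_t\psi\|\,\|Q_0(\phi,\phi)\|$ plus an absorption, or directly via $\frac{\mathrm{d}}{\mathrm{d}t}\|\partial\psi\|\lesssim\|Q_0(\phi,\phi)\|$ as in the flat analogue of Lemma~\ref{le:energy}. Your stated rate for $\|Q_0(\phi,\phi)(t)\|$ is not sharp, but it is integrable in $t$, which is all you need. The paper instead treats $\partial\psi$ exactly like $\psi$: it integrates $|\partial\psi|\lesssim\epsilon\langle t+r\rangle^{-1}\langle t-r\rangle^{-1}\ln^{2}\langle t-r\rangle$ over the slice, and Proposition~\ref{prop:existence} even provides an extra factor $\langle t-r\rangle^{-1}$.

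Your route costs extra steps: the flux through the hyperboloidal piece of the initial surface and the absorption. In exchange it uses only the time-integrability of $\|Q_0(\phi,\phi)\|$, i.e.\ the null structure, rather than the sharp pointwise decay of $\partial\psi$ near the cone. The paper's route is a one-line computation that handles $\psi$ and $\partial\psi$ uniformly. Pointwise integration is needed for $\|\psi\|$ in any case, since the energy alone does not bound $\|\psi\|$ uniformly in time.
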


\begin{proof}
    Proof of (i). Recall that, from the almost sharp decay~\eqref{est:almost1}, we find 
    \begin{equation*}
        |\phi(t,r,\omega)|\lesssim \epsilon (t+r)^{-1}\ln (t+r).
    \end{equation*}
    Based on the above estimate and the assumption that $(\phi_{0},\phi_{1},\psi_{0},\psi_{1})$ has compact support, we have
    \begin{equation*}
    \begin{aligned}
        \|\phi(t)\|^{2}
        &\lesssim \int_{\mathbb{S}^{2}}\int_{\R}|\phi(t,r,\omega)|^{2}r^{2}\d r\d \omega\\
        &\lesssim \epsilon^{2}\int_{0}^{t} \left(\frac{r}{t+r}\right)^{2}\ln^{2}(t+r)\d r\lesssim 
        \epsilon^{2}t \ln^{2}t.
        \end{aligned}
    \end{equation*}
    Then, from the fact that $u=t-r$ and $v=t+r$,
    \begin{equation*}
        r\in \left(\frac{t}{8},\frac{t}{4}\right)\Longrightarrow u\sim v\sim t\sim r.
    \end{equation*}
    It follows from the precise decay of $\phi_{\ell=0}$ in \eqref{est:pointphi0} and $\|\phi\| \geq \|\phi_{\ell=0}\|$ that 
    \begin{equation*}
        \|\phi(t)\|^{2}\gtrsim \int_{\frac{t}{8}}^{\frac{t}{4}}|\phi_{\ell=0}(t,r)|^{2}r^{2}\d r\gtrsim (\cfrak_1)^2t.
    \end{equation*}
    Combining the above estimates, we complete the proof for the estimates in (i).

    \smallskip
    Proof of (ii). Recall also that, from the almost sharp decay~\eqref{est:almost1},
    \begin{equation*}
        |\psi(t,r,\omega)|+|\partial \psi(t,r,\omega)|\lesssim \epsilon \langle t+r\rangle^{-1}\langle t-r\rangle^{-1}\ln^{2}\langle t-r\rangle.
    \end{equation*}
    Based on the above estimate and the assumption that  $(\phi_{0},\phi_{1},\psi_{0},\psi_{1})$ has compact support, we have
    \begin{equation*}
        \begin{aligned}
        \|\psi(t)\|^{2}+\|\partial\psi(t)\|^{2}
        &\lesssim \int_{\mathbb{S}^{2}}\int_{\R}
        \left(
        |\psi(t,r,\omega)|^{2}
        +|\partial \psi(t,r,\omega)|^{2}
        \right)r^{2}\d r\d \omega\\
        &\lesssim \epsilon^{2}\int_{0}^{t} \frac{r^{2}\ln^{4}\langle t-r\rangle}{\langle t+r\rangle^{2}\langle t-r\rangle^{2}}\d r
        \lesssim \epsilon^2\int_{0}^{t}\frac{1}{\langle t-r\rangle^{\frac{3}{2}}}\d r
        \lesssim 
        \epsilon^{2},
        \end{aligned}
    \end{equation*}
    which directly completes the proof for the estimates in (ii).
\end{proof}

Last, we study the growth of the natural energy $\|\partial \phi\|$ which, together with Lemma \ref{lem:growthofL2ofphi}, completes the proof of Theorem~\ref{thm:blow-up}. This relies on introducing a nonlinear transformation 
\begin{equation}\label{def:Xnon-transform}
X(\Phi, \Psi) = U\Phi - \ln v (\partial_t \Psi)^2.
\end{equation}

\begin{lemma}[Growth of $\|\partial \phi\|$]\label{prop:growthofprphi}
   Under the assumptions of Theorem~\ref{thm:blow-up},  it holds for sufficiently large $t$ that
    \begin{equation}\label{eq:growthofpartialphi}
        \mathfrak{c}_{5}\ln t\lesssim \|\partial \phi(t)\|\lesssim \epsilon \ln t,
    \end{equation}
    with the constant $\cfrak_5$ given as in \eqref{def:cfrak5constant}. Further, assume $\cfrak_1\neq 0$, then it holds $\cfrak_5>0$.
\end{lemma}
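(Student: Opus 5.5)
The two-sided bound \eqref{eq:growthofpartialphi} splits into an upper bound from the almost sharp decay and a lower bound coming from the transformation $X(\Phi,\Psi)$ in \eqref{def:Xnon-transform}.

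\textbf{Upper bound.} By compact support and finite speed of propagation, $\phi(t)$ is supported in $r\le t+C$. The almost sharp decay \eqref{est:almost1} gives $|\partial\phi|\lesssim \epsilon v^{-1}u^{-1}\ln v$. Hence
\[
\|\partial\phi(t)\|^2\lesssim \epsilon^2\int_0^{t+C}\frac{r^2\ln^2 t}{t^2\langle t-r\rangle^2}\,\d r\lesssim \epsilon^2\ln^2 t .
\]
This is better than claimed, since the integral is dominated near the cone and is $O(1)$ there. Alternatively, the bootstrap bound $\mathcal{E}(s,\phi)^{1/2}\lesssim\epsilon\ln s$ from Proposition~\ref{prop:existence} together with an energy identity between $\{t=\text{const}\}$ and hyperboloids gives the same estimate.

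\textbf{Lower bound.} First use $\|\partial\phi\|\ge \|U\phi\|/2$ and restrict the integral to a region near the light cone, say $t/2\le r\le t-1$, so that $1\le u\le t/2$ and $v\sim r\sim t$. There $U\phi=r^{-1}U\Phi+r^{-2}\Phi$, and by \eqref{est:almost1} the second term is bounded by $\epsilon t^{-2}\ln t$; its contribution to the $L^2$ norm is $O(\epsilon t^{-1/2}\ln t)$, which is negligible.

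For the main part, use \eqref{est:Firsttech}: in $\mathcal{D}_{{\rm ext},2\delta}$ one has $U\Phi=\ln v\,(\partial_t\Psi)^2+O(\epsilon u^{-1}\ln u)$. This region covers $u\le t^{1/(1+2\delta)}/C$ within our slab. Moreover $(\partial_t\Psi)(u,v,\omega)=(\partial_t\Psi)(u,+\infty,\omega)+O(\epsilon u^{-1}v^{-1}\ln^2 v)$, obtained by integrating $V\partial_t\Psi$ via the $\Psi$ equation as in Lemma~\ref{lem:UPsi-v-decay}. Thus, for $1\le u\le u_*(t):=t^{1/(1+2\delta)}/C$,
\[
rU\phi=\ln t\,(\partial_t\Psi)^2(u,+\infty,\omega)+O\big(\epsilon u^{-1}\ln u+\epsilon u^{-2}\big).
\]
Integrating $|U\phi|^2r^2\,\d r\,\d\omega$ over this range of $u$ gives
\[
\|U\phi\|^2\ge \ln^2 t\int_{\mathbb S^2}\int_1^{u_*(t)}(\partial_t\Psi)^4(u,+\infty,\omega)\,\d u\,\d\omega-\text{(cross and error terms)}.
\]
The square of the error contributes $O(\epsilon^2)$, since $\int u^{-2}\ln^2u<\infty$. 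The cross term is at most $\ln t\,\|(\partial_t\Psi)^2\|_{L^2_{u,\omega}}\cdot O(\epsilon)$ by Cauchy–Schwarz. As $t\to\infty$, the main integral tends to $\cfrak_5^2$ minus the missing piece $\int_0^1$; that piece is handled by extending the slab down to $u\in(0,1)$ using the local bounds, or by noting that compact support lets one shift $u$. This yields $\|\partial\phi\|\gtrsim\cfrak_5\ln t-O(\epsilon)$ for large $t$, which is the claim whenever $\cfrak_5>0$. When $\cfrak_5=0$ the lower bound is trivial.

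\textbf{Positivity of $\cfrak_5$.} By \eqref{def:cfrakanddfrakconsts:copy}, $\cfrak_1\neq0$ forces $U\Psi\not\equiv 0$ on $\mathcal I$. Since $\partial_t\Psi=\tfrac12U\Psi$ on $\mathcal I$, the quartic integral is positive, so $\cfrak_5>0$. Continuity of the integrand in $u$ (from \eqref{est:almost1} with commuted $\Gamma$'s) makes this positivity rigorous.

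\textbf{Main obstacle.} The delicate step is justifying \eqref{est:Firsttech} together with the replacement of $\partial_t\Psi(u,v)$ by its value at null infinity uniformly on the chosen slab. One has to check that the error $\epsilon u^{-1}\ln u$ is square-integrable in $u$ against the measure $\d u$, which it is, so that it contributes only $O(\epsilon)$ and not $O(\epsilon\ln t)$. One also has to check that the excluded region $u\gtrsim t^{1/(1+2\delta)}$ loses only a vanishing tail of $\cfrak_5^2$, which follows from the decay $|\partial_t\Psi|\lesssim\epsilon u^{-2}\ln^2u$.
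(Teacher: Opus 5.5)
Your proposal is correct and follows essentially the paper's argument. The upper bound comes from \eqref{est:almost1}. The lower bound comes from the smallness of $X(\Phi,\Psi)=U\Phi-\ln v\,(\partial_t\Psi)^2$ (via \eqref{est:Firsttech}), combined with transporting $\partial_t\Psi$ along $V$ from $\mathcal{I}$ back to the slice $\{t\}\times\mathbb{R}^3$ to recover $\cfrak_5^2$. The paper does this transport in integrated form (the identity for $V\big((U\Psi)^4\big)$) over the whole slice, using $\ln v\ge\ln t$. You instead do it pointwise on a slab $r\sim t$, which has the small advantage that the $r^{-2}\Phi$ part of $U\phi$ is trivially negligible, with no Hardy-type step needed to pass from $\|\partial\phi\|$ to $\|r^{-1}U\Phi\|$. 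One slip: your upper bound $\|\partial\phi\|^2\lesssim\epsilon^2\ln^2 t$ is exactly the claimed estimate, not an improvement on it.
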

\begin{proof}
Without loss of generality, we assume that the initial data is posed on $\mathcal{K}_{1}=\mathcal{H}_{1}\cup\left\{(t,x):(t,r)\in \{\sqrt{5}\}\times [2,+\infty)\right\}$ and vanishes for $r\ge 1$ on $\mathcal{K}_{1}$. From now on, we consider the Cauchy problem~\eqref{equ:main} in the flat foliation of the spacetime, i.e., in constant-$t$ slices.

\smallskip
\textbf{Step 1}. Upper bound of $\|r^{-1} X(\Phi, \Psi)\|$.
First, from~\eqref{est:Firsttech} and the almost sharp decay~\eqref{est:almost1}, we deduce that 
\begin{equation*}
  |X(\Phi,\Psi)|\lesssim \epsilon u^{-1}\ln u\quad \mbox{on }\ \mathcal{D}_{\rm{ext},2\delta}\Longrightarrow 
  |X(\Phi,\Psi)|\lesssim \epsilon u^{-1}\ln u\quad \mbox{on }\  \mathcal{M}.
\end{equation*}
Integrating the above estimate over $r\in [0,t]$, we directly have 
\begin{align}\label{eq:X-upper}
    \|r^{-1} X(\Phi, \Psi)\|^2
    \lesssim \epsilon^{2} \int_{0}^{t}\frac{\ln^{2}\langle t-r\rangle}{\langle t-r\rangle^{2}}\d r
    \lesssim
    \epsilon^2.
\end{align}

\textbf{Step 2.} Lower bound of $\big\|r^{-1}(U \Psi)^2\big\|$.
Using again \eqref{equ:PhiPsi:waveUV}, we compute
\begin{equation*}
    V\left((U\Psi)^4\right) = 4 (U\Psi)^3 
    \left(r^{-2} \slashed{\Delta} \Psi + rQ_0(\phi, \phi) \right).
\end{equation*}
By the standard energy estimate in the space-time $[t,\infty)\times \R^{3}$ with the boundary $(\left\{t\right\}\times \R^{3})\cup \mathcal{I}$ and the fact that the initial data has compact support, we find 
\begin{equation*}
\begin{aligned}
   &\frac{1}{2}\int_{0}^{+\infty}\int_{\mathbb{S}^{2}}(U\Psi)^{4}(u,+\infty,\omega)\d \omega\d u -\|r^{-1}(U\Psi)^{2}\|^{2}\\
   &=4\int_{\mathbb{S}^{2}}\int_{t}^{\infty}\int_{0}^{t}(U\Psi)^{3}
    \left(r^{-2} \slashed{\Delta} \Psi + rQ_0(\phi, \phi) \right)\d r \d \sigma \d \omega.
   \end{aligned}
\end{equation*}
Using again the almost sharp decay~\eqref{est:almost1}, we obtain
\begin{equation*}
    |U\Psi|^{3} \left|r^{-2} \slashed{\Delta} \Psi + rQ_0(\phi, \phi) \right|
    \lesssim \epsilon^{4}v^{-2}u^{-6}(\ln v)(\ln^{6}u),
\end{equation*}
which, together with the definition of $\cfrak_5$ in \eqref{def:cfrak5constant}, directly implies 
\begin{equation}\label{eq:U-Psi-lower}
    \|r^{-1} (U\Psi)^2\|^2 \gtrsim (\mathfrak{c}_{5})^{2}+O(\epsilon^{4}t^{-1}\ln t).
\end{equation}

\textbf{Step 3.} Conclusion. 
First, from the almost sharp decay~\eqref{est:almost1}, we obtain 
\begin{equation*}
    \|\partial \phi\|^{2}\lesssim \|U\phi\|^{2}+\|V\phi\|^{2}
    \lesssim \ep^{2} \int_{0}^{t}\frac{\ln^{2}\langle t+r\rangle}{\langle t-r\rangle^{2}}\d r\lesssim \ep^{2}\ln^{2}t,
\end{equation*}
which completes the proof of the upper bound of $\|\partial \phi\|$.

\smallskip
On the other hand, from $2\partial_{t}=V+U$, we decompose
\begin{equation*}
    (\partial_{t}\Psi)^{4}=\frac{1}{16}(U\Psi)^{4}+O\left(|U\Psi|^{3}|V\Psi|+|V\Psi|^{4}\right).
\end{equation*}
Using again the almost sharp decay~\eqref{est:almost1}, we obtain
\begin{equation*}
    |U\Psi|^{3} |V\Psi|+|V\Psi|^{4}
    \lesssim \epsilon^{4}v^{-1}u^{-7}(\ln^{8}u),
\end{equation*}
which directly implies 
\begin{equation}\label{eq:VPSIUPPER}
  \left\|r^{-1}\left(|U\Psi|^{\frac{3}{2}}|V\Psi|^{\frac{1}{2}}+|V\Psi|^{2}\right)\right\|^{2}\lesssim \epsilon^{4}t^{-1}.
\end{equation}
Combining \eqref{eq:X-upper}--\eqref{eq:VPSIUPPER} with \eqref{def:Xnon-transform}, we obtain
\begin{equation*}
\begin{aligned}
  \|\partial \phi\|^{2}\ge  \frac{1}{2} \|r^{-1}U\Phi\|^{2}&\ge \frac{1}{4}\int_{\R^{3}}r^{-2}(\pt\Psi)^{4}\ln^{2}v\d x-\int_{\R^{3}}r^{-2}(X(\Phi,\Psi))^{2}\d x\\
    &\gtrsim (\mathfrak{c}_{5})^{2}\ln^{2}t
    +O\left(\ep^{4}t^{-1}\ln^{2}t\right)
+O\left(\ep^{4}t^{-1}\ln^{3}t\right)+O(\ep^{2}),
    \end{aligned}
\end{equation*}
which hence proves the estimate \eqref{eq:growthofpartialphi} for $t$ large enough.
In view of the definition of the constants $\cfrak_1$ and $\cfrak_5$ in \eqref{def:cfrak1and2:intro} and \eqref{def:cfrak5constant}, $\cfrak_1>0$ implies $\cfrak_5>0$. This concludes the proof of Lemma~\ref{prop:growthofprphi}.
\end{proof}

\end{document}